\setlist[enumerate]{label={\upshape(\roman*)}}
\newcommand{\bD}{\begin{definition}\label}
\newcommand{\eD}{\end{definition}}
\newtheorem*{theorem*}{Theorem}
\newtheorem{theorem}{Theorem}[section]
\newtheorem{lemma}[theorem]{Lemma}
\newtheorem{corollary}[theorem]{Corollary}
\newtheorem{proposition}[theorem]{Proposition}
\theoremstyle{remark}
\newtheorem{definition}[theorem]{Definition}
\newtheorem{example}[theorem]{Example}
\newtheorem{question}{Question}
\newtheorem{remark}[theorem]{Remark}
\newcommand{\Sc}{\mathcal{S}}
\theoremstyle{remark}
\DeclareMathOperator{\diag}{diag}
\DeclareMathOperator{\area}{Area}
\newcommand{\spans}{\operatorname{span}}
\newcommand{\range}{\operatorname{ran}}
\title[Singly generated Selfadjoint-Ideal operator semigroups]{\textbf{Singly generated Selfadjoint-Ideal operator semigroups: \\spectral density of the generator and simplicity}}
\begin{document}
\author{Sasmita Patnaik*}
\address{Department of Mathematics and Statistics, Indian Institute of Technology, Kanpur 208016 (INDIA)}
\email{sasmita@iitk.ac.in}
\author{Sanehlata}
\address{Department of Mathematics and Statistics, Indian Institute of Technology, Kanpur 208016 (INDIA)}
\email{snehlata@iitk.ac.in}
\author{Gary Weiss**}
\address{Department of Mathematics, University of Cincinnati, Cincinnati, OH 45221-0025 (USA)}
\email{gary.weiss@uc.edu}

\subjclass[2010]
{Primary: 47A10, 47A60, 47A62, 47B15, 47B20, 47B37, 47D03, 20M12\\
Secondary: 47A05, 47B47}

\maketitle	
	
\begin{abstract}
This extends our new study of the automatic selfadjoint ideal property for $B(\mathcal H)$-operator semigroups introduced to us by Heydar Radjavi (SI semigroups for short). Our investigation here of singly generated SI semigroups led to unexpected algebraic and analytic phenomena on the simplicity of SI semigroups and on the spectral density of their generators.
In particular: the SI property yields for a hyponormal operator, zero planar area measure of its approximate point spectrum; the same for the essential spectrum of an essentially normal operator; and that SI semigroups generated by unilateral weighted shifts with periodic nonzero weights are simple. We also characterized the simplicity of the SI semigroups generated by certain commuting classes of normal operators.
\end{abstract}		

\textit{Keywords and phrases}: Semigroup ideals, simple semigroups, singly generated selfadjoint semigroups, semigroup invariant, weighted shifts, hyponormal operators, essentially normal operators, partial isometry, spectrum, spectral density.
	
\section{Introduction}

In \cite{PW21} we began our investigation of a question posed to us by Heydar Radjavi in a private communication (2015):  Which multiplicative semigroups
in $B(\mathcal H)$ have all their multiplicative ideals (that is, semigroup-ideals) automatically
selfadjoint \cite[Definitions 1.1-1.3]{PW21}. We call these semigroups selfadjoint-ideal semigroups (SI semigroups for short). He pointed out, for instance, that in multiplicative semigroups $B(\mathcal H)$ and $F(\mathcal H)$, all multiplicative ideals are automatically selfadjoint. 
We found this SI property interesting because it turned out to be a unitary invariant for semigroups of $B(\mathcal H)$, which invariant we believe is new; and hence a useful tool in distinguishing between them up to unitary equivalence, and sometimes in determining their simplicity (i.e., whether or not they have no proper multiplicative ideals), a subject of considerable interest in semigroup theory.

Herein all ideals of semigroups are meant to be two-sided, and $B(\mathcal H)$ and $F(\mathcal H)$ respectively are regarded as the multiplicative semigroups of bounded linear operators and finite rank operators on a finite or infinite dimensional complex Hilbert space $\mathcal H$. 
In the study of semigroups, the possibility of general characterizations (or any general structure) of selfadjoint semigroups of $B(\mathcal H)$ (i.e., semigroups closed under adjoints, see Definition \ref{D1}) seems unexplored which complicates our investigation of SI semigroups even at the basic level of singly generated semigroups. Hence our main focus on these here and in \cite{PW21}.

An SI semigroup is a multiplicative semigroup $\mathcal S$ all of whose ideals are selfadjoint, or equivalently, in which for each $A$ in the semigroup $\mathcal S$, the bilinear operator equation $A^* = XAY$ is solvable for some $X, Y \in \mathcal S \cup \{I\}$ \cite[Lemma 1.9]{PW21}. 
Here we investigate further characterizations of SI for several special fundamental classes of semigroups and consequences of their possessing  the SI property. To our knowledge, the study of this bilinear operator equation in terms of the existence of solutions in a multiplicative semigroup in $B(\mathcal H)$ is new, 
as well as further potential for finding new simple semigroups from our techniques used to study the SI property. 

Our study began in \cite{PW21} with characterizations of SI semigroups inside certain classes of singly generated selfadjoint semigroups of $B(\mathcal H)$-operators. Our main focus turned out to be on singly generated selfadjoint semigroups $\Sc(T, T^*)$ generated by $T \in B(\mathcal H)$ (all finite products of $T$ and $T^*$). (Herein a singly generated selfadjoint semigroup we mean to be the semigroup generated by $T$ and $T^*$, hence the alternate name for $\Sc(T, T^*)$ ``singly generated selfadjoint semigroup" despite the double generators.)
Our aim there was and in part here is twofold: to study the impact of the SI property of $\Sc(T, T^*)$ on the structure of and constraints on special important classes of $T$ and to find characterizations (in terms of properties of $T$) of simple and non-simple ones. At first the investigation of which $\Sc(T, T^*)$ are SI semigroups for an arbitrary $T$ seemed intractable. So in \cite{PW21} we first considered the class of normal operators and, among the non-normals, the class of rank-one operators. For these two classes of operators, the SI property implied simplicity of $\Sc(T, T^*)$ in most cases and non-simplicity in rare cases. (See \cite[Section 3 end]{PW21} for a summary of results.)

In this paper, we continue the study of SI semigroups by focusing on singly generated selfadjoint semigroups $\Sc(T, T^*)$ generated by operators beyond our work on normal operators and rank-one operators $T$. More specifically, among the non-normal operators we study the special classes of operators: unilateral weighted shifts, hyponormal operators (which include subnormal operators), and essentially normal operators. (We say that $T$ is hyponormal if $T^*T-TT^* \geq 0$; an operator $T$ on a Hilbert space $\mathcal H$ is subnormal if there is a Hilbert space $\mathcal K$ containing $\mathcal H$ and a normal operator $S$ on $\mathcal K$ such that the restriction of $S$ to $\mathcal H$ is equal to $T$); and $T$ is essentially normal if the image of $T$ in the Calkin algebra $B(\mathcal H)/K(\mathcal H)$ is normal.) 
Our study led to interesting algebraic and analytic impacts of the SI property for $\Sc(T, T^*)$ on the simplicity of these $\Sc(T, T^*)$ semigroups and the spectral density of its generator $T$. And we also study the SI semigroups generated by a set of commuting normals as part of our study of the SI property of semigroups generated by more than one operator. Beyond sets of commuting normals, the study of the SI property of semigroups generated by arbitrary sets (possibly non-normals) remains open for us.  In the next four paragraphs, we summarize the work done in this paper. 

In the case of singly generated semigroup $\Sc(T, T^*)$ generated by a nonselfadjoint normal operator $T$, in \cite[Theorem 2.1]{PW21} we proved that simplicity is equivalent to the SI property. In this paper, we investigate the SI property of semigroups generated by commuting normals, by first determining necessary and sufficient conditions for the semigroup to be simple (Corollary \ref{operator simplicity}). In this case, semigroups are automatically SI semigroups because simple semigroups are trivially SI. In an attempt to characterize the nonsimple SI semigroups generated by a set of commuting normals, we only manage to determine necessary and sufficient conditions for a nonsimple SI semigroup when generated by a set of two normals (Corollary \ref{TSI}). We could not extend our techniques even to the 3 generator case, and so it remains to be addressed.  

In our investigation of the SI property of $\Sc(T, T^*)$ generated by unilateral weighted shifts with weight sequences $\{\alpha_n\}$, we considered two classes of weight sequences: weight sequences $\{\alpha_n\}$ with a zero gap (that is, for some $i \geq 1$ one has $\alpha_i \neq0$ and $\alpha_{i+1}= 0$) and weight sequences with no zero gap. For the class of weighted shifts with a zero gap, we determined equivalent conditions for $\Sc(T, T^*)$ to be SI (Theorem \ref{theorem 3.1}), and in this case $\Sc(T, T^*)$ is always nonsimple (Corollary \ref{nonsimple SI semigroups}). For the class of weighted shifts with no zero gap, we obtained a necessary condition (which is not sufficient, see Example \ref{p-example}) for the SI property for $\Sc(T, T^*)$ (Theorem \ref{THEOREM}). Nevertheless, we were able to obtain a necessary and sufficient condition for $\Sc(T, T^*)$ to be SI when generated by any $T$ from two particular subclasses of weighted shift operators from among those that have no zero gap, that is, $\{\alpha_j\}=0_N\oplus \{\alpha_j\}_{j>N}$ where $\alpha_j\ne 0$ for $j>N\ge 0$. Those classes are: those weighted shifts whose nonzero weights $\{\alpha_j\}_{j>N}$ have periodic absolute value sequence ($\{|\alpha_j|\}_{j>N}$); and those weighted shifts whose nonzero weights $\{\alpha_j\}_{j>N}$ have eventually constant absolute value sequence ($\{|\alpha_j|\}_{j>N}$) (Theorem~\ref{periodic SI characterization}, Corollary \ref{eventually-constant}).

Historically, among the non-normal operators, there has been a continuing interest in the study of hyponormal operators and essentially normal operators in terms of their spectral density (i.e., the various kinds of ``thinness'' of their spectrum), for example in \cite{Put70}, \cite{Stamp62}, and \cite{Stamp65}. In particular, the topological nature of the spectrum has been important in distinguishing hyponormal from normal operators in terms of various kinds of spectral thinness of these operators to force normality from hyponormality. For instance, if $T$ is hyponormal with a single limit point in its spectrum, then $T$ is normal \cite[Theorem 3]{Stamp62}; if the spectrum of $T$ is an arc, then $T$ is normal \cite[Theorem 4]{Stamp65}; if the planar area (i.e., the Lebesgue area which herein we simply call area) of its spectrum is zero, then $T$ is normal \cite[Corollary]{Put70}. And for a special class of hyponormals, namely, subnormal operators, the essential spectrum has been of interest as it provides a criterion to characterize those subnormal operators that are also essentially normal. For example, if $T$ is subnormal and the area of its essential spectrum is zero, then $T$ is essentially normal \cite[Corollary 31.15]{Conway}; and if the set of rational functions and the set of continuous functions acting on the essential spectrum of a subnormal operator are the same, then $T$ is essentially normal  \cite[Corollary 31.14]{Conway}. A natural question one might be interested in is: \textit{When does a subnormal or a hyponormal operator have any of the aforementioned spectral properties?} We provide some  partial  answers to this question 
 for singly generated SI semigroups $\Sc(T, T^*)$. More precisely, the SI property for $\Sc(T, T^*)$ generated by a hyponormal operator implies that the planar area of the approximate point spectrum of $T$ is zero (Lemma \ref{lemma6.1} and Remark \ref{R6.1}). As a consequence, whenever the boundary of the spectrum of hyponormal $T$ excludes at least one point of the unit circle, the SI property for $\Sc(T, T^*)$ implies normality of a hyponormal operator (Theorem \ref{hypo}). The SI property for $\Sc(T, T^*)$ generated by an essentially normal operator implies that the planar area of the essential spectrum of $T$ is zero (Corollary \ref{corollary6.4}); and for a subnormal operator, under the SI property for $\Sc(T, T^*)$, the essential normality of $T$ is equivalent to the planar area measure of the essential spectrum being zero (Corollary \ref{corollary7.3}). So in some cases, the SI property for $\Sc(T, T^*)$ implies some of the different topological constraints arising in the citations above. We prove our results using the notion of characters on singly generated unital $C^*$ algebras for certain classes of operators.
(The existence of characters on $C^*(T)$ under various spectral conditions for hyponormals was investigated by Bunce in \cite{JB70}.)

Analysis of the interconnections between the SI semigroup $\Sc(T, T^*)$ and the spectrum of $T$ also reveals an interesting connection to the singly generated unital $C^*$-algebra $C^*(T)$. Note that the elements of $\Sc(T, T^*)$ are words in $T$ and $T^*$ which along with the identity $I$ are the basic building blocks for unital $C^*(T)$. For $T$ normal, it is known that $C^*(T)$ contains nontrivial projections if and only if the spectrum of $T$ is disconnected (\cite[Theorem 2.1.13]{Murphy} and Corollary \ref{projection}). We prove in Corollary \ref{projection} that if $T$ is a non-invertible normal operator, then the SI property for $\Sc(T, T^*)$ implies that the spectrum of $T$ is disconnected and hence $C^*(T)$ contains nontrivial projections. %In the literature, the existence of characters on $C^*(T)$ under various spectral conditions for hyponormals and other special operators has been investigated by Arveson, Stampfli, Bunce, et.al. [reference]. By combining some of these results with the SI property of $\Sc(T, T^*)$, we obtained that the spectrum of $T$ is a subset of the closed unit disc [result] and that certain parts of the spectrum must be a subset of the unit circle (including $0$, if $T$ is not invertible) [result].  

Based on our investigations so far, we anticipate that for an arbitrary operator $T$, there may be deep connections between the SI property of $\Sc(T, T^*)$, the existence of characters on $C^*(T)$, and the topological (and analytical) nature of the spectrum of $T$. The core problem in this investigation is how to solve the bilinear operator equation mentioned earlier in a multiplicative semigroup in $B(\mathcal H)$. And we hope that our work stimulates further investigation in this subject which here and along with \cite{PW21} is at its early stage of development.

For the convenience of the reader we recall below the definitions and terminology from \cite{PW21}. 

\textbf{Terminology (Definitions \ref{D1}-\ref{D6})}

The terminology given in Definitions \ref{D1}-\ref{D3} is standard. The terminology in Definition \ref{D4} on the notion of selfadjoint-ideal semigroups and our focus in \cite{PW21}, we believe is new and due to Radjavi.

\bD{D1}
A semigroup $\Sc$ in $B(\mathcal H)$ is a subset closed under multiplication.
 A selfadjoint semigroup $\Sc$ is a semigroup also closed under adjoints,  i.e., $\Sc^* := \{T^* \mid T \in \Sc\} \subset \Sc$.
\eD
 
\bD{D3}
 An ideal $J$ of a semigroup $\mathcal S$ in $B(\mathcal H)$ is a subset of $\Sc$ closed under products of operators in $\Sc$ and $J$. That is, $XT, TY \in J$ for $T \in J$ and $X, Y \in \mathcal S$. And so also $XTY \in J$.
 \eD

   \bD{D4} A selfadjoint-ideal (SI) semigroup $\Sc$ in $B(\mathcal H)$ is a semigroup for which every ideal $J$ of $\Sc$ is closed under adjoints, i.e., $J^{*} := \{T^{*} \mid T \in J\} \subset J.$  
 \eD
Because this selfadjoint ideal property in Definition \ref{D4} concerns selfadjointness of all ideals in a semigroup, we call these semigroups  selfadjoint-ideal semigroups (SI semigroups for short).

\vspace{.2cm}

\textit{Semigroups generated by $\mathcal A \subset B(\mathcal H)$}
\bD{D5}
The semigroup generated by a set $\mathcal A \subset  B(\mathcal H)$, denoted by $\mathcal S(\mathcal A)$, is the intersection of all semigroups containing $\mathcal A.$ Also define $\mathcal A^*:= \{A^* | A \in \mathcal A\}$.
\eD
For short we denote by $\Sc(T)$ the semigroup generated by $\{T\}$ (called generated by $T$ for short).
It should be clear for the semigroup $\mathcal S(\mathcal A)$ that Definition \ref{D5} is equivalent to the semigroup consisting of all possible words of the form $A_1A_2\cdots A_k$ where $k \in \mathbb N$ and $A_i \in \mathcal A$ for each $1\leq i \leq k$.
 
 \bD{D6}
The selfadjoint semigroup generated by a set $\mathcal A \subset B(\mathcal H)$ denoted by $\Sc(\mathcal A \cup \mathcal A^{*})$ or $\Sc(\mathcal A,\mathcal A^{*})$, is the intersection of all selfadjoint semigroups containing  $\mathcal A \cup \mathcal A^{*}$.
Let $\Sc(T, T^*)$ denote for short $\Sc(\{T\}, \{T^*\})$ and call it the singly generated selfadjoint semigroup generated by $T$.
 \eD
 
It is clear that $\Sc(\mathcal A, \mathcal A^{*})$ is a selfadjoint semigroup. Moreover, it is clear that Definition \ref{D6} conforms to the meaning of $\Sc(\mathcal A \cup \mathcal A^*)$ in terms of words discussed above. That is, it consists of all words of the form $A_1A_2\cdots A_k$ where $k \in \mathbb N$ and $A_i \in \mathcal A \cup \mathcal A^{*}$ for each $1\leq i \leq k$.

The focus of this paper is the investigation of the singly generated SI semigroups $\Sc(T, T^*)$. So, we provide a description of the elements of $\Sc(T, T^*)$ here (see also \cite[Proposition 1.6]{PW21}).

For $T \in B(H)$, the semigroup $S(T, T^*)$ generated by the set $\{T, T^*\}$ is given by

\noindent $ S(T, T^*) = \{T^n, {T^*}^n, \Pi_{j=1}^{k}T^{n_j}{T^*}^{m_j},  (\Pi_{j=1}^{k}T^{n_j}{T^*}^{m_j})T^{n_{k+1}}, \Pi_{j=1}^{k}{T^*}^{m_j}T^{n_j}, (\Pi_{j=1}^{k}{T^*}^{m_j}T^{n_j}){T^*}^{m_{k+1}},$ where $n \ge 1,\,  k\ge1,\, n_j, m_j \ge 1\, \text{for}\, 1 \le j \le k, ~\text{and}~n_{k+1}, m_{k+1} \geq 1\}$.
The product $\Pi_{j= 1}^{k}$ in the semigroup list is meant to denote an ordered product.
Indeed, this follows directly from Definitions \ref{D5}-\ref{D6} and the accompanying word description by taking $\mathcal A = \{T\}$.

Alternatively $\Sc(T, T^*)$ consists of: words only in $T$, words only in $T^*$, words that begin and end in $T$, words that begin with $T$ and end with $T^*$, and words that begin with $T^*$ and end with $T$ and words that begin and end with $T^*$.
\color{black}

	\section{On simplicity of SI semigroups generated by commuting normals} 
	
In \cite[Section 3]{PW21} we obtained a complete characterization (i.e., necessary and sufficient conditions to possess the SI property) of semigroups $\Sc(T, T^*)$ generated by a rank-one operator $T$; and in some cases the SI property implied the simplicity of $\Sc(T, T^*)$. (A summary of the complete classification is provided in \cite[before Remark 3.21]{PW21}.) The various levels of difficulty and limited techniques at our disposal made us take a complicated approach to obtain the characterization for the SI semigroup $\Sc(T, T^*)$ in this simplest case of rank-ones. Further study of the SI semigroups generated by a finite rank operator beyond the rank-one operators will appear separately in a later paper.

Among many other results in \cite{PW21}, we characterized SI and simplicity for those semigroups $\Sc(T, T^*)$ generated by a single normal operator $T$. 
In this section we focus on SI and simplicity questions for semigroups of commuting normal operators and singly generated semigroups generated by unilateral weighted shifts. We exploit the GNS (Gelfand-Naimark-Segal) $C^*$-isometric isomorphism for commuting classes of normal operators to answer those questions.

Before we begin the investigation of SI semigroups generated by commuting normals  starting with those generated by two commuting normal operators and before moving on to arbitrary numbers of generators, 
we recall the singly generated SI semigroup characterization generated by a normal operator \cite[Theorem 2.1]{PW21}. And then investigate singly generated SI semigroups $\Sc(T, T^*)$ generated by an infinite rank weighted shift.

\begin{theorem*} \cite[Theorem 2.1]{PW21} For $T$ a normal nonselfadjoint operator, the following are equivalent.
\begin{enumerate}[label=(\roman*)]
\item $\Sc(T,T^*)$ is an SI semigroup.
\item $T$ is unitarily equivalent to $U \oplus 0$ (or $U$ when $\ker T = \{0\}$) with $U$ a unitary operator.
\item $\Sc(T,T^*)$ is a simple semigroup.
\end{enumerate}
\end{theorem*}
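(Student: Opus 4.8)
The plan is to prove the cycle of implications (iii) $\Rightarrow$ (i) $\Rightarrow$ (ii) $\Rightarrow$ (iii). The implication (iii) $\Rightarrow$ (i) is immediate: a simple semigroup has no proper ideals, so its only ideal is $\Sc(T,T^*)$ itself, which is selfadjoint by construction (Definition \ref{D6}); hence every ideal is selfadjoint and the semigroup is SI. The substance lies in (i) $\Rightarrow$ (ii) and (ii) $\Rightarrow$ (iii), and throughout I would exploit that $T$ is normal, so $T$ and $T^*$ commute. Consequently every word in $\Sc(T,T^*)$ collapses to a monomial $T^p(T^*)^q$ with $p,q \geq 0$ not both zero, and under the continuous functional calculus $C^*(T) \cong C(\sigma(T))$ this corresponds to the symbol $z^p\bar z^q$. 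The key reduction is that an operator identity between two such monomials holds if and only if the corresponding symbols agree on $\sigma(T)$.

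For (i) $\Rightarrow$ (ii) I would apply the SI criterion (solvability of $A^* = XAY$ with $X,Y \in \Sc(T,T^*)\cup\{I\}$, \cite[Lemma 1.9]{PW21}) to the single element $A = T$. Writing the solutions as monomials and collecting exponents, the equation $T^* = XTY$ becomes $T^* = T^m(T^*)^n$ for some integers $m \geq 1$, $n \geq 0$; passing to symbols on $\sigma(T)$ this reads $\bar z = z^m\bar z^n$ for every $z \in \sigma(T)$. Taking moduli gives $|z| = |z|^{m+n}$, so $|z| \in \{0,1\}$ provided $m+n \geq 2$. The excluded case $m+n=1$ forces $m=1$, $n=0$, i.e. $\bar z = z$ on $\sigma(T)$ and hence $T$ selfadjoint, contrary to hypothesis; therefore $m+n \geq 2$ and $\sigma(T) \subseteq \{0\}\cup\mathbb{T}$. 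Since $T$ is normal, the spectral projection of $\{0\}$ is the projection onto $\ker T$, and on its orthogonal complement $T$ restricts to a normal operator with spectrum in $\mathbb{T}$, hence to a unitary $U$; this yields $T \cong U\oplus 0$ (or $T \cong U$ when $\ker T = \{0\}$), which is (ii).

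For (ii) $\Rightarrow$ (iii) I would identify the semigroup structure explicitly. With $T = U\oplus 0$ and $U$ unitary one has $T^* = U^*\oplus 0 = U^{-1}\oplus 0$, so each monomial satisfies $T^p(T^*)^q = U^{p-q}\oplus 0$; as $p,q$ range over the admissible exponents, $p-q$ ranges over all of $\mathbb{Z}$, whence $\Sc(T,T^*) = \{\,U^k\oplus 0 : k \in \mathbb{Z}\,\}$. In particular $TT^* = I\oplus 0$ serves as an identity element for the semigroup, and $U^{-k}\oplus 0$ inverts $U^k\oplus 0$, so $\Sc(T,T^*)$ is a group. A group has no proper ideals, since any ideal contains, with each element, that element's inverse and hence the identity, and therefore the whole group; thus $\Sc(T,T^*)$ is simple.

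The main obstacle, and the step deserving the most care, is the reduction from operator identities to pointwise identities of symbols on $\sigma(T)$: one must justify that the words in the generators genuinely collapse to monomials $T^p(T^*)^q$ (using normality) and then invoke the isometric $*$-isomorphism $C^*(T)\cong C(\sigma(T))$ to pass freely between the operator equation $T^* = T^m(T^*)^n$ and the scalar equation $\bar z = z^m\bar z^n$. Once this dictionary is in place, the spectral constraint and the group structure follow by elementary modulus and exponent bookkeeping. A minor point to handle cleanly is tracking which exponent pairs $(m,n)$ are realizable from $X,Y \in \Sc(T,T^*)\cup\{I\}$; for necessity only the existence of some admissible pair is used, while for sufficiency the explicit choice $X=I$, $Y=(T^*)^2$ (giving $T(T^*)^2 = U^{-1}\oplus 0 = T^*$ when $m=1$) already exhibits solvability.
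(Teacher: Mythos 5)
Your argument is correct and follows essentially the same route the paper takes: the modulus computation $|z|=|z|^{m+n}$ after applying a character (equivalently, the Gelfand isomorphism $C^*(T)\cong C(\sigma(T))$) to $T^*=XTY$ is exactly the mechanism of Theorem \ref{theorem6.3} and Lemma \ref{lemma6.1}, and the decomposition $T\cong U\oplus 0$ via the isolated point $0\in\sigma(T)$ matches Corollary \ref{normal}. The remaining implications (the group structure of $\{U^k\oplus 0\}$ giving simplicity, and simplicity trivially giving SI) are handled the same elementary way.
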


For $N_1$ and $N_2$ normal operators, we denote by $\Sc(N_1, N_1^*, N_2, N_2^*)$ the selfadjoint semigroup generated by $N_1$ and $N_2$. 
When commuting, i.e., $N_1N_2 = N_2N_1$, it follows from Putnam-Fuglede theorem (\cite[Problem 192]{Hal82}) that $N_iN_j^* = N_j^*N_i$ for $i=1,2$, and hence $\Sc(N_1, N_1^*, N_2, N_2^*)$ is an abelian semigroup. 
 And therefore, the following theorem reduces the questions on SI and simplicity of $\Sc(N_1, N_1^*, N_2, N_2^*)$ to questions on SI and simplicity of the corresponding  semigroup in the $C^*$-algebra of continuous functions which vanish at infinity on locally compact Hausdorff space 
$\Omega(\mathcal A)$, the set of characters (nonzero complex-valued homomorphisms on $\mathcal A$) on a nonzero abelian $C^*$-algebra $\mathcal A$.

\begin{theorem*} \cite[Theorem 2.1.10 (Gelfand)]{Murphy} If $\mathcal A$ is a nonzero abelian $C^*$-algebra, then the Gelfand representation 
$$\phi : \mathcal A \rightarrow C_0(\Omega(\mathcal A))$$
is an isometric $*$-isomorphism.
\end{theorem*}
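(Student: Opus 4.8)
The plan is to show that the Gelfand transform $\phi$, defined by $\phi(a)(\tau) = \tau(a)$ for $a \in \mathcal{A}$ and $\tau \in \Omega(\mathcal{A})$, is simultaneously a $*$-homomorphism, an isometry, and surjective; injectivity will then come for free from isometry. I would first reduce to the unital case: if $\mathcal{A}$ has no identity I would adjoin one to form the unitization $\widetilde{\mathcal{A}}$, note that $\Omega(\widetilde{\mathcal{A}})$ is the one-point compactification $\Omega(\mathcal{A}) \cup \{\infty\}$, and observe that the functions on $\Omega(\widetilde{\mathcal{A}})$ arising from elements of $\mathcal{A}$ are exactly those vanishing at $\infty$, which identifies the target as $C_0(\Omega(\mathcal{A}))$. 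Equipping $\Omega(\mathcal{A})$ with the weak-$*$ topology makes it locally compact Hausdorff (compact in the unital case), and the general Banach-algebra Gelfand theory already supplies that $\phi$ is a norm-decreasing algebra homomorphism into $C_0(\Omega(\mathcal{A}))$ satisfying $\|\phi(a)\|_\infty = r(a)$, the spectral radius of $a$.

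The two genuinely $C^*$-theoretic inputs I would establish next are that $\phi$ preserves adjoints and that it is isometric. For the $*$-property, the key fact is that a self-adjoint element of a $C^*$-algebra has real spectrum; since $\tau(a)$ lies in the spectrum of $a$ for every character $\tau$, it follows that $\tau(h) \in \mathbb{R}$ whenever $h = h^*$. Writing a general $a = h + ik$ with $h, k$ self-adjoint then yields $\tau(a^*) = \overline{\tau(a)}$, i.e. $\phi(a^*) = \overline{\phi(a)}$. For isometry I would exploit the $C^*$-identity: for self-adjoint $b$ one has $\|b^2\| = \|b\|^2$, whence by induction $\|b^{2^n}\| = \|b\|^{2^n}$ and so $r(b) = \lim_n \|b^{2^n}\|^{1/2^n} = \|b\|$. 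Applying this to $b = a^*a$ and combining with the already-proved $*$-property gives $\|a\|^2 = \|a^*a\| = r(a^*a) = \|\phi(a^*a)\|_\infty = \|\overline{\phi(a)}\,\phi(a)\|_\infty = \|\phi(a)\|_\infty^2$, so that $\|\phi(a)\|_\infty = \|a\|$.

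Finally I would establish surjectivity. Since $\phi$ is isometric, its image is a complete, hence closed, $*$-subalgebra of $C_0(\Omega(\mathcal{A}))$ (self-adjointness of the image being exactly the $*$-property just proved). By the very definition of $\Omega(\mathcal{A})$ the characters separate the points of $\Omega(\mathcal{A})$, and at each point some character is nonzero, so $\phi(\mathcal{A})$ separates points and vanishes nowhere; the locally compact form of the Stone--Weierstrass theorem then forces $\phi(\mathcal{A})$ to be dense. Being both dense and closed, $\phi(\mathcal{A}) = C_0(\Omega(\mathcal{A}))$, which completes the argument.

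The step I expect to require the most care is the passage to the non-unital case and the correct identification of the target as $C_0$ rather than $C$, since one must track the point at infinity and confirm that both the self-adjointness of characters and the norm-equals-spectral-radius identity survive unitization. The conceptual heart, however, is the single $C^*$-identity $\|b^2\| = \|b\|^2$ for self-adjoint $b$: it is precisely this that upgrades the merely contractive Banach-algebra Gelfand map to an isometry, and everything else is standard Gelfand theory together with Stone--Weierstrass.
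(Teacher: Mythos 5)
This statement is a classical theorem that the paper merely quotes from Murphy \cite{Murphy}*{Theorem 2.1.10} without supplying any proof of its own, so there is no internal argument to compare against. Your proof is the standard (and correct) textbook argument --- unitization and identification of $\Omega(\widetilde{\mathcal A})$ with the one-point compactification, reality of characters on self-adjoint elements giving the $*$-property, the $C^*$-identity $\|b^2\|=\|b\|^2$ upgrading $\|\phi(a)\|_\infty=r(a)$ to an isometry via $\|a\|^2=\|a^*a\|=r(a^*a)$, and Stone--Weierstrass plus closedness of the isometric image for surjectivity --- and it is essentially the proof found in the cited source.
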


We apply this theorem to the case when the $C^*$-algebra $\mathcal A$ is generated by two commuting normals, $N_1$ and $N_2$, i.e., $\mathcal A = C^*(N_1, N_2)$, in which case the functions $f = \phi (N_1)$ and $g = \phi(N_2)$ form the generators of the corresponding $C^*$-algebra $C_0(\Omega(\mathcal A))$. And, since the SI property and simplicity of $\Sc(N_1, N_1^*, N_2, N_2^*) \subset \mathcal A$ is preserved under the isometric $*$-isomorphism $\phi$, so it suffices to study the SI property and simplicity of $\Sc(f, g, \bar{f}, \bar{g}) \subset C_0(\Omega(\mathcal A))$.
In Proposition \ref{CN}--Corollary \ref{operator simplicity}, we determine necessary and sufficient conditions for the simplicity of the selfadjoint semigroup $\Sc(f, g, \bar{f}, \bar{g})$ and then for the semigroup $\Sc(f, g, \bar{f}, \bar{g})$ to be a nonsimple SI semigroup.

For brevity denote: $X:= \Omega(\mathcal A)$ and for $f \in C_0(X)$, denote by $S_f$ the support set of $f$ in $X$ for which obviously $S_f = S_{\bar{f}}$.

\begin{proposition}\label{CN}(2 generator function simplicity)
For $0 \ne f,g \in C_0(X)$, the semigroup $\Sc(f, g, \bar{f}, \bar{g})$ is simple if and only if 
$\chi_{S_f} = \chi_{S_g}$ (equivalently, $S_f = S_g$) and $\chi_{S_f} = fgW$ for some word 
$W \in \Sc(f, g, \bar{f}, \bar{g}) \cup \{1\}$. 
And the equivalence remains true after replacing $fg$ in the equation $\chi_{S_f} = fgW$ by the conjugate of either $f$ or $g$ or both.
\end{proposition}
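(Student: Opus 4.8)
The plan is to use the commutativity of $C_0(X)$ to put the semigroup in a concrete normal form and then recognize that simplicity here is exactly the statement that the semigroup is a group. First I would record, straight from the word description of $\Sc(\cdot)$ in the excerpt, that since $C_0(X)$ is abelian every element of $\Sc(f,g,\bar f,\bar g)$ is a monomial $f^a\bar f^b g^c\bar g^d$ with $a,b,c,d\ge 0$ and $a+b+c+d\ge 1$, and that the support (cozero set) of such a monomial is $S_f$, $S_g$, or $S_f\cap S_g$ according to which generators actually occur. I would also note the evident facts $S_f=S_{\bar f}$, $\overline{\chi_{S_f}}=\chi_{S_f}$, and that $\chi_{S_f}\,h=h$ for every $h$ supported inside $S_f$.

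For the backward implication, assume $S_f=S_g=:K$ and $\chi_K=fgW_0$ for some word $W_0\in\Sc(f,g,\bar f,\bar g)\cup\{1\}$. Since every monomial is supported in $K$, the element $e:=\chi_K=fgW_0$ is a two-sided identity of the semigroup. Reading $e=f\cdot(gW_0)$ exhibits the word $gW_0$ as an inverse of $f$; conjugating gives $\bar f\cdot\overline{gW_0}=\bar e=e$, and $e=g\cdot(fW_0)$ together with its conjugate handle $g$ and $\bar g$. Hence all four generators, and therefore every element, are invertible, so $\Sc(f,g,\bar f,\bar g)$ is a group; a group has no proper ideals (from $b=a(a^{-1}b)$ one sees the principal ideal of any $a$ is everything), so the semigroup is simple.

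For the forward implication, assume simplicity. Applying it to the principal ideal of $f$ gives $g=f$ or $g=fw$, whence $S_g\subseteq S_f$; by symmetry $S_f=S_g=K$, i.e.\ $\chi_{S_f}=\chi_{S_g}$. Next, simplicity of the ideal generated by $fg$ yields $f=fgW'$ for some $W'\in\Sc(f,g,\bar f,\bar g)\cup\{1\}$; dividing by $f$ on the cozero set $K$ and using $S_g=K$ to control the complement shows $gW'=\chi_K$ as functions (which, incidentally, forces $K$ to be clopen, so that $\chi_K$ is genuinely in $C_0(X)$). If the word $W'$ contains a factor $f$, then $gW'=fgW''$ already displays $\chi_K=fgW''$. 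Otherwise I would run the symmetric argument $g=fgV'$ to get $fV'=\chi_K$, and if $V'$ contains no factor $g$ either, multiply the two identities: $\chi_K=\chi_K^2=(gW')(fV')=fg(W'V')$, producing the required word. The variant statement then follows from the group picture: once $\Sc$ is a group, $\bar f g$ (resp.\ $f\bar g$, $\bar f\bar g$) is invertible and its inverse $W'$ gives $\chi_K=\bar f g\,W'$, while conversely any such identity forces the same group structure by the backward argument; the $\bar f\bar g$ case is moreover immediate by conjugating $\chi_K=fgW$ and using $\overline{\chi_K}=\chi_K$.

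The part I expect to require the most care is the bookkeeping around supports and continuity: making precise that $S_f$ is exactly the locus where the relevant products equal $1$, that the equation $\chi_K=fgW$ compels $K$ to be clopen so that $\chi_K$ lives in $C_0(X)$, and covering the forward-direction subcases according to whether the extracted word already contains the missing generator. Everything else is the soft principle that a commutative semigroup possessing an identity is simple if and only if it is a group.
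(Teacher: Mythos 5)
Your proof is correct, and while the necessity half coincides with the paper's argument, your sufficiency half takes a genuinely cleaner route. The paper proves the backward direction by brute force: it writes an arbitrary $Y=f^m\bar f^ng^k\bar g^l$, raises $\chi_{S_f}=fgW$ to a power $r>\max(m,n,k,l)$, and factors $Y$ out of $f^rg^r\bar f^r\bar g^r W^r\bar W^r$ to land $\chi_{S_f}$ (hence each generator) in $(Y)_{\Sc(f,g,\bar f,\bar g)}$. You instead observe that $e:=\chi_{S_f}=fgW_0$ is a two-sided identity (since $S_f=S_g$ forces every monomial to be supported in $K$), that each generator is then invertible via $e=f(gW_0)$ and its conjugate, and hence that the semigroup is a group, which is trivially simple. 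This is the same computation in disguise --- the inverse of $Y$ in your group is exactly the cofactor the paper extracts --- but your packaging isolates the real principle (``a commutative semigroup with identity is simple iff it is a group'') and makes the ``replace $fg$ by conjugates'' addendum transparent, whereas the paper re-runs the factoring. Your forward direction matches the paper's: extract $f=fgW'$ and $g=fgV'$ from principal ideals, compare supports, and multiply the two normalized identities $gW'=\chi_K=fV'$ to produce $\chi_K=fg(W'V')$; note that this last multiplication already covers all cases, so your subcase split on whether $W'$ contains a factor $f$ is harmless but unnecessary. The only bookkeeping point worth making explicit is the one the paper also handles, namely $fg\ne 0$; in your ordering this is automatic because $S_f=S_g\ne\emptyset$ is established first.
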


\begin{proof}
Suppose $\Sc(f, g, \bar{f}, \bar{g})$ is simple. Then, in particular, the principal ideal generated by $fg \ne 0$ coincides with the entire semigroup, i.e., $(fg)_{\Sc(f, g, \bar{f}, \bar{g})} = \Sc(f, g, \bar{f}, \bar{g})$. 
That $fg \ne 0$ follows from the fact that $fg = 0$ implies 
$g \notin (f)_{\Sc(f, g, \bar{f}, \bar{g})} = \Sc(f, g, \bar{f}, \bar{g})$, against simplicity 
(since otherwise $g = fW$ for some 
$W \in \Sc(f, g, \bar{f}, \bar{g}) \cup \{1\}$, hence 
$0 \neq g^2 = gfW = 0$, contradiction).
Hence, 
\begin{equation}{\label{EE1}}
f = fg W'
\end{equation}

\begin{equation}{\label{EE2}}
g = fg W''
\end{equation}
for some words $W'$ and $W''$ in $\Sc(f, g, \bar{f}, \bar{g}) \cup \{1\}$. For $x \in S_f$, it follows from Equation (\ref{EE1}) that $g(x) \neq 0$ implying $S_f \subset S_g$. For $x \in S_g$, it follows from Equation (\ref{EE2}) that $f(x) \neq 0$ implying $S_g \subset S_f$. Hence $S_f = S_g$. Moreover, by substituting $g = fg W''$ in Equation (\ref{EE1}), one obtains $$f = f^{2}gW'W''.$$
For $x \in S_f$, $$1 = f(x)g(x)W'(x)W''(x)$$ and for $x \in Z_f$ (zero set of $f$),  $$0 = f(x)g(x)W'(x)W''(x).$$ Therefore, for $x \in S_f \cup Z_f = X$, $$\chi_{S_f} = fgW'W''$$
where $\chi_{S_f}$ denotes the characteristic function on $S_f$ and $W'W''$ is a word in $\Sc(f, g, \bar{f}, \bar{g}) \cup \{1\}$.

Conversely, suppose $S_f = S_g$ and $\chi_{S_f} = fgW$ for some word $W \in \Sc(f, g, \bar{f}, \bar{g})$. To prove simplicity of the semigroup $\Sc(f, g, \bar{f}, \bar{g})$, 
 it suffices to show that the principal ideal generated by any word in $\Sc(f, g, \bar{f}, \bar{g})$ coincides with the semigroup. In order to do so, it further suffices to show the generators $f, g, \bar{f}, \bar{g}$ are in every given principal ideal $(Y)_{\Sc(f, g, \bar{f}, \bar{g})}$ for $Y \in \Sc(f, g, \bar{f}, \bar{g})$. 
And since $f = f \chi_{S_f}, g = g \chi_{S_g}$, 
so also $\bar{f} = \bar{f}\chi_{S_f}, \bar{g} = \bar{g}\chi_{S_g}$, 
and because $S_f = S_g$ implies $\chi_{S_f} = \chi_{S_g}$, it suffices to show $\chi_{S_f} \in (Y)_{\Sc(f, g, \bar{f}, \bar{g})}$. 

Since $Y \in \Sc(f, g, \bar{f}, \bar{g})$, $Y = f^m\bar{f}^ng^k\bar{g}^l$ for some $m, n, k, l \geq 0$ not all zero (by definition of ``generated''). 
(Here we interpret exponent $0$ to mean that variable is absent, instead of $1$ which may not be in the semigroup.)
Clearly $\chi_{S_f} = \chi_{S_f}^r$ for all $r \geq 1$ and since $\chi_{S_f} = fgW$, one has $\chi_{S_f} = \bar{f}\bar{g}\bar{W}$ and finally 
$\chi_{S_f} = f^rg^r\bar{f}^r\bar{g}^rW^r\bar{W}^r$. Then choosing $r > max(n,m,k,l)$, 
one can factor out $Y =  f^m\bar{f}^ng^k\bar{g}^l$ from $\chi_{S_f} = f^rg^r\bar{f}^r\bar{g}^rW^r\bar{W}^r$, which places the latter in $ (Y)_{\Sc(f, g, \bar{f}, \bar{g})}$.
\end{proof}

In the above Proposition \ref{CN}, when $\Sc(f, g, \bar{f}, \bar{g})$ is simple, then $\chi_{S_f} = \chi_{S_g}$ and $\chi_{S_f} = fgW \in \Sc(f, g, \bar{f}, \bar{g}) \subset C_0(X)$, so is continuous. Hence one can define the projection $P:= \phi^{-1}(\chi_{S_f}) \in C^*(N_1, N_2)$, and obtain as a consequence of this theorem the following characterization of simple SI semigroups $\Sc(N_1, N_2, N_1^{*}, N_2^{*})$.

\begin{corollary}\label{operator}(2 generator operator simplicity)
For two commuting normal operators $N_1,N_2 \ne 0$, 
the 
semigroup $\Sc(N_1, N_2, N_1^{*}, N_2^{*})$ is simple if and only if the Gelfand transform supports of the operators, i.e., of their functions $\phi(N_1)$ and $\phi(N_2)$, are the same and the defined above projection $P= N_1N_2W$ for some word $W \in \Sc(N_1, N_2, N_1^{*}, N_2^{*})) \cup \{I\}$. And the equivalence remains true after replacing $N_1, N_2$ in the equation $P= N_1N_2W$ by the adjoint of either $N_1$ or $N_2$ or both.\end{corollary}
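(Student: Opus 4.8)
The plan is to transport the function-level characterization of Proposition \ref{CN} back to the operator level through the Gelfand isomorphism, the only genuine work being to verify that each ingredient survives the transfer. First I would record that, since $N_1N_2=N_2N_1$, the Putnam--Fuglede theorem gives $N_iN_j^*=N_j^*N_i$, so that $\mathcal A:=C^*(N_1,N_2)$ is a nonzero abelian $C^*$-algebra; Gelfand's theorem then furnishes the isometric $*$-isomorphism $\phi:\mathcal A\to C_0(X)$ with $X=\Omega(\mathcal A)$. Setting $f:=\phi(N_1)$ and $g:=\phi(N_2)$ and using that $\phi$ is $*$-preserving, so $\phi(N_i^*)=\overline{\phi(N_i)}$, I would observe that $\phi$ restricts to a bijective semigroup homomorphism of $\Sc(N_1,N_2,N_1^*,N_2^*)$ onto $\Sc(f,g,\bar f,\bar g)$, and that extending $\phi$ to the unitizations sends $I$ to the adjoined unit $1$.

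Next, because a semigroup isomorphism carries ideals to ideals and hence preserves simplicity, $\Sc(N_1,N_2,N_1^*,N_2^*)$ is simple if and only if $\Sc(f,g,\bar f,\bar g)$ is simple. At this point I would invoke Proposition \ref{CN}: the latter is simple precisely when $S_f=S_g$ (equivalently $\chi_{S_f}=\chi_{S_g}$) and $\chi_{S_f}=fgW$ for some word $W\in\Sc(f,g,\bar f,\bar g)\cup\{1\}$. The equality $S_f=S_g$ is exactly the statement that the Gelfand supports of $\phi(N_1)$ and $\phi(N_2)$ coincide, which is the first operator-level condition.

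It then remains only to translate the equation $\chi_{S_f}=fgW$. Here I would use the observation recorded just before the corollary that in the simple case the right-hand side lies in $C_0(X)$, so $\chi_{S_f}$ is continuous and $P:=\phi^{-1}(\chi_{S_f})$ is a genuine element of $\mathcal A$; since $\phi^{-1}$ is a $*$-homomorphism and $\chi_{S_f}$ is a real idempotent, $P=P^*=P^2$ is a projection. Applying $\phi^{-1}$ (extended to unitizations, so that $1\mapsto I$) to $\chi_{S_f}=fgW$ yields $P=N_1N_2\,\phi^{-1}(W)$ with $\phi^{-1}(W)\in\Sc(N_1,N_2,N_1^*,N_2^*)\cup\{I\}$, which is the second operator-level condition; conversely $P=N_1N_2W'$ pushes forward under $\phi$ to $\chi_{S_f}=fg\,\phi(W')$. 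The concluding addendum, replacing $N_1$ or $N_2$ (equivalently $f$ or $g$) by its adjoint, follows verbatim from the analogous addendum in Proposition \ref{CN} under the same transfer.

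The one delicate point, and the step I would treat most carefully, is the well-definedness of $P$: a priori $\chi_{S_f}$ need not belong to $C_0(X)$, so $\phi^{-1}(\chi_{S_f})$ is meaningful only once the equation $\chi_{S_f}=fgW$ (equivalently the simplicity hypothesis) forces $\chi_{S_f}$ into the range $C_0(X)$ of $\phi$. For this reason the cleanest formulation is the biconditional as stated, in which the very existence of the projection $P$ is part of, rather than prior to, the characterizing condition. Every remaining assertion is a mechanical consequence of $\phi$ being an isometric $*$-isomorphism that intertwines the two semigroups together with their ideal structures.
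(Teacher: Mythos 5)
Your proposal is correct and follows essentially the same route as the paper, which likewise derives the corollary by transporting Proposition \ref{CN} through the Gelfand isometric $*$-isomorphism and defines $P:=\phi^{-1}(\chi_{S_f})$ only after noting that simplicity forces $\chi_{S_f}=fgW$ to lie in $C_0(X)$. Your explicit attention to the well-definedness of $P$ matches the paper's remark preceding the corollary, so there is nothing to add.
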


Proposition \ref{CN} generalizes nicely to semigroups with an arbitrary collection of generators.

\begin{theorem}\label{CN arbitrary generators}(Arbitrary generator function simplicity)
Let $\mathcal F \subset C_0(X)$ be a set of nonzero generators for the semigroup $\Sc(\mathcal F)$ (i.e., the set of all finite words in elements from $\mathcal F$).

The semigroup $\Sc(\mathcal F)$ is simple if and only if 
all functions in $\mathcal F$ have the same support $S$
 and for each finite subset of $\mathcal F, \{f_i\}^n_1$, one has $\chi_S = \Pi^n_1 f_iW$ for some word 
$W \in \Sc(\mathcal F) \cup \{1\}$.
\end{theorem}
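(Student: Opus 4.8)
The plan is to mimic the two-generator argument in Proposition~\ref{CN}, replacing the single product $fg$ by the finite product $\Pi^n_1 f_i$ and handling the support condition uniformly. I would work entirely inside $C_0(X)$ using the pointwise multiplication structure, so that simplicity translates into the solvability of equations $Y = (\text{word})\cdot Z \cdot (\text{word})$ pointwise on $X$.

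\medskip

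\noindent\textbf{Necessity.} Assume $\Sc(\mathcal F)$ is simple. First I would show all generators share a common support. Fix $f, h \in \mathcal F$. Simplicity forces the principal ideal generated by $f$ to be all of $\Sc(\mathcal F)$, so in particular $h = fW$ for some word $W \in \Sc(\mathcal F)\cup\{1\}$; evaluating at a point $x$ where $h(x)\ne 0$ gives $f(x)\ne 0$, hence $S_h \subset S_f$. By symmetry $S_f = S_h$, so every generator has the same support $S$. (As in the proposition, one must first rule out degenerate vanishing products: if some product of generators were identically zero, a generator $h$ would fail to lie in the ideal generated by an earlier factor, contradicting simplicity.) Next, for a given finite subset $\{f_i\}^n_1$, simplicity gives each $f_i = (\Pi^n_1 f_j)W_i$ for some word $W_i$, exactly as Equations~(\ref{EE1})--(\ref{EE2}) did for two generators. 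Substituting these back into one another (or equivalently multiplying $\Pi^n_1 f_i$ by the product of the resulting words) produces an equation of the form $\Pi^n_1 f_i = (\Pi^n_1 f_i)^2 W$ on $S$, whence $1 = (\Pi^n_1 f_i)W$ on $S$ and $0 = (\Pi^n_1 f_i)W$ off $S$; since $S = S_{f_i}$ for all $i$, the zero set of $\Pi^n_1 f_i$ is exactly $X\setminus S$, so this reads $\chi_S = (\Pi^n_1 f_i)W$ on all of $X$ with $W \in \Sc(\mathcal F)\cup\{1\}$.

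\medskip

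\noindent\textbf{Sufficiency.} Conversely, assume all $f \in \mathcal F$ have common support $S$ and $\chi_S = (\Pi^n_1 f_i)W$ for every finite subset. I would show every principal ideal $(Y)_{\Sc(\mathcal F)}$ equals the whole semigroup, and for this it suffices (since $f = f\chi_S$ for each generator, and $\chi_S$ is idempotent) to place $\chi_S$ in $(Y)_{\Sc(\mathcal F)}$. Write $Y$ as a word in finitely many generators, say involving $f_1,\dots,f_n$ with total exponents bounded by some $M$. Using $\chi_S = \chi_S^r$ for all $r$ together with the hypothesis $\chi_S = (\Pi^n_1 f_i)W$ (and its conjugate, giving $\chi_S = (\Pi^n_1 \bar f_i)\bar W$), raise to a large power $r > M$ so that $\chi_S$ is expressed as a product in which each generator appearing in $Y$ occurs to a power exceeding its exponent in $Y$. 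Then $Y$ can be factored out, exhibiting $\chi_S \in (Y)_{\Sc(\mathcal F)}$. The final clause about replacing factors by their conjugates follows because $S_f = S_{\bar f}$ and the conjugation is a symmetry of both the hypothesis and the argument.

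\medskip

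\noindent\textbf{Main obstacle.} The genuinely new difficulty beyond the two-generator case is the bookkeeping when $\mathcal F$ is infinite: each finite product equation only controls the generators it names, so I must ensure the common-support conclusion and the factoring argument are carried out over \emph{all} relevant finite subsets coherently. The key simplifying observation is that a fixed word $Y$ involves only finitely many generators, so only the hypothesis for that finite subset is needed to produce $\chi_S$ with large enough exponents; the infinitude of $\mathcal F$ never forces an infinite product. Verifying that the common support $S$ is indeed independent of which finite subset one inspects — which the pairwise argument above secures — is the one place care is required, but it reduces to the same pointwise evaluation used for $n=2$.
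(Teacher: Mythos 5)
Your proposal is correct and takes essentially the same route as the paper's proof: the common support is obtained pairwise from $f = gW$, $g = fW'$ by pointwise evaluation, the identity $\chi_S = \Pi^n_1 f_i\,W$ is produced by cancellation on $S$, and sufficiency is proved by raising $\chi_S = \Pi^n_1 f_i\,W$ to a power $r$ exceeding every exponent in $Y$ and factoring $Y$ out. Two small repairs: for a singleton subset ($n=1$) your substitution step only yields $f_1 = f_1 W_1$, which does not self-improve to $f_1 = f_1^2 W$, so argue as the paper does by placing $f_1$ in the ideal generated by $(\Pi^n_1 f_i)^2$ and cancelling $f_1$; and the auxiliary conjugate equation $\chi_S = \Pi^n_1 \bar f_i\,\overline{W}$ should be dropped, since $\Sc(\mathcal F)$ is not assumed selfadjoint (so $\overline{W}$ need not lie in it) and this theorem, unlike Proposition~\ref{CN}, has no conjugate clause to verify.
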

\begin{proof}
Suppose $\Sc(\mathcal F)$ is simple. Then any two functions $f,g \in \mathcal F$ must have the same support, equivalently, all have the same support. 
This follows because by simplicity, the principal ideals $(f)_{\Sc(\mathcal F)} = \Sc(\mathcal F) = (g)_{\Sc(\mathcal F)}$, hence $f = gW$ and $g = fW'$ for some $W,W' \in \Sc(\mathcal F) \cup \{1\}$, which implies as in the proof of Propostion \ref{CN} that $f,g$ have the same zero set and hence the same support.
Since all functions in $\mathcal F$ have the same support, all finite products of them,  $\Pi^n_1 f_i$, have the same support, call it $S$. Then again by simplicity, 
$f_1 \in \Sc(\mathcal F) = ((\Pi^n_1 f_i)^2)_{\Sc(\mathcal F)}$, 
so $f_1 =  (\Pi^n_1 f_i)^2W$ for some $W \in \Sc(\mathcal F) \cup \{1\}$, and canceling one obtains 
$\chi_S =   (\Pi^n_1 f_i)(\Pi^n_2 f_i)W$, the desired condition.

Conversely, assume that all functions in $\mathcal F$ have the same support $S$
 and for all finite subsets of $\mathcal F, \{f_i\}^n_1$, one has $\chi_S = \Pi^n_1f_iW$ for some word 
$W \in \Sc(\mathcal F) \cup \{1\}$. To show simplicity it clearly suffices to show that every $h \in \Sc(\mathcal F)$ lies in every principal ideal $(f)_{\Sc(\mathcal F)}$ for every $f \in \Sc(\mathcal F)$. To show this principal ideal claim, for each $f \in\Sc(\mathcal F)$, one has $f = \Pi^n_1f_i^{k_i}$ for some finite subset $\{f_i\}^n_1 \subset \mathcal F$ and all $k_i > 0$. Choosing $r > \max \{k_i\}_1^n$ and using the hypothesis that $\chi_S = \Pi^n_1f_iW$ for some word 
$W \in \Sc(\mathcal F) \cup \{1\}$, one obtains 
\begin{equation*}\label{F}
\chi_S = \chi^r_S = f^r_1\cdots f^r_nW^r = \Pi^n_1f_i^{k_i}f^{r-k_1}_1\cdots f^{r-k_n}_nW^r
= ff^{r-k_1}_1\cdots f^{r-k_n}_nW^r
\end{equation*}
Hence, $\chi_S \in (f)_{\Sc(\mathcal F)}$. Moreover, since all functions in $\mathcal F$ have the same support $S$, the functions $h$ and $\chi_S$ share the same support. Multiplying both sides by $h$, we obtain
$h = h\chi_S = fhW' \in (f)_{\Sc(\mathcal F)}$, where $W' := f^{r-k_1}_1\cdots f^{r-k_n}_nW^r$, which is what was needed to be shown.
\end{proof}

As with Proposition \ref{CN}-Corollary \ref{operator}, Theorem \ref{CN arbitrary generators} has its normal operator application.

\begin{corollary}\label{operator simplicity}(Arbitrary generator operator simplicity)
For a commuting family of nonzero normal operators $\mathcal F$, 
the 
selfadjoint semigroup generated by $\mathcal F$, $\Sc(\mathcal F, \mathcal F^*)$, is simple if and only if the Gelfand transform supports of the operators, i.e., of their functions $\phi(N), N \in \mathcal F \cup \mathcal F^*$, are the same $S$ and for every finite subset of $\mathcal F \cup \mathcal F^*, \{N_i\}^n_1$,  the projection $P := \phi^{-1}\chi_S$ satisfies $P= \Pi_1^nN_iW$ for some word $W \in \Sc(\mathcal F, \mathcal F^*) \cup \{I\}$.\end{corollary}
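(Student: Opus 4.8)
The plan is to reduce this operator statement to its function-algebra counterpart, Theorem \ref{CN arbitrary generators}, by transporting everything through the Gelfand representation, exactly in the spirit of the passage from Proposition \ref{CN} to Corollary \ref{operator}. First I would check that $\mathcal{A} := C^*(\mathcal{F})$ is abelian. Since $\mathcal{F}$ is a commuting family of normal operators, the Fuglede--Putnam theorem (\cite[Problem 192]{Hal82}) gives $N_iN_j^* = N_j^*N_i$ for all $N_i,N_j \in \mathcal{F}$, so $\mathcal{F}\cup\mathcal{F}^*$ is a commuting set of normals and $\mathcal{A}$ is commutative. Hence the Gelfand representation $\phi:\mathcal{A}\to C_0(X)$, with $X:=\Omega(\mathcal{A})$, is an isometric $*$-isomorphism.

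Next I would record that $\phi$ carries the selfadjoint semigroup onto its function analogue. Because $\phi$ is a $*$-homomorphism, $\phi(N^*)=\overline{\phi(N)}$, so writing $\mathcal{G}:=\{\phi(N):N\in\mathcal{F}\cup\mathcal{F}^*\}=\phi(\mathcal{F})\cup\overline{\phi(\mathcal{F})}\subset C_0(X)$, the map $\phi$ sends each word in $\mathcal{F}\cup\mathcal{F}^*$ to the corresponding word in $\mathcal{G}$ and restricts to a bijective semigroup homomorphism $\Sc(\mathcal{F},\mathcal{F}^*)\to\Sc(\mathcal{G})$. A semigroup isomorphism induces a bijection on the lattice of two-sided ideals, so $\Sc(\mathcal{F},\mathcal{F}^*)$ is simple if and only if $\Sc(\mathcal{G})$ is simple.

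Then I would invoke Theorem \ref{CN arbitrary generators} for $\mathcal{G}\subset C_0(X)$: the semigroup $\Sc(\mathcal{G})$ is simple precisely when all functions in $\mathcal{G}$ share one support $S$ and, for every finite subset $\{f_i\}_1^n\subset\mathcal{G}$, one has $\chi_S=\Pi_1^n f_iW$ for some word $W\in\Sc(\mathcal{G})\cup\{1\}$. Translating back, the common support $S$ of the functions $\phi(N)$ is exactly the common Gelfand-transform support in the statement; and since $\chi_S$ equals a semigroup element $\Pi_1^n f_iW\in C_0(X)$ it is continuous and vanishes at infinity, so $P:=\phi^{-1}(\chi_S)$ is a well-defined selfadjoint idempotent, i.e.\ a projection in $\mathcal{A}$. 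Applying $\phi^{-1}$ to $\chi_S=\Pi_1^n f_iW$ and using $\phi^{-1}(\overline{\phi(N)})=N^*$ converts this into $P=\Pi_1^n N_iW'$ with $W':=\phi^{-1}(W)\in\Sc(\mathcal{F},\mathcal{F}^*)\cup\{I\}$, which is the asserted condition; the converse runs identically, since every step is an equivalence under the bijection $\phi$.

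I would expect the only genuinely delicate point to be the well-definedness of $P$: the element $\phi^{-1}(\chi_S)$ makes sense only once $\chi_S$ is known to lie in the image $C_0(X)$ of $\phi$, and this continuity is not automatic for a characteristic function but is supplied precisely by the defining equation $\chi_S=\Pi_1^n f_iW$, which exhibits $\chi_S$ as a semigroup element. The remaining work---that $\phi$ respects words, carries adjoints to conjugates, and preserves the ideal structure---is routine given that $\phi$ is an isometric $*$-isomorphism, so no new estimates are needed beyond those already used for Corollary \ref{operator}.
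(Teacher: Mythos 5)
Your proposal is correct and follows essentially the same route the paper intends: the corollary is obtained by transporting Theorem \ref{CN arbitrary generators} through the Gelfand isometric $*$-isomorphism, exactly as in the passage from Proposition \ref{CN} to Corollary \ref{operator}, with commutativity of $C^*(\mathcal F)$ supplied by Fuglede--Putnam. Your remark that continuity of $\chi_S$ (hence well-definedness of $P=\phi^{-1}(\chi_S)$) comes from the identity $\chi_S=\Pi_1^n f_iW$ matches the paper's own justification.
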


This ends our characterization of simplicity for semigroups in terms of their generators. We next investigate the relationship between SI semigroups and their generators, and recall that $C_0(X)$ denotes the range of the Gelfand map $\phi$.
\begin{theorem}\label{TSI}(Doubly generated function nonsimple SI)
For $0 \ne f,g \in C_0(X)$, the semigroup $\Sc(f, g, \bar{f}, \bar{g})$ is SI and nonsimple if and only if $\bar{f} = fW$ and $\bar{g} = gW'$ for some $W$ and $W'$ words in $f, g, \bar{f}, \bar{g},1$ but if
$g$ appears in $W$ (or $\bar{g}$ appears in $W$ (i.e., it is not necessary to check both)), then $W'$ must be a word in $g, \bar{g}$ only; and similarly, if  $f$ appears in $W'$ (or $\bar{f}$ appears in $W'$ (i.e., it is not necessary to check both)), then $W$ must be a word in $f, \bar{f}$ only.  (Of course then, if both $W$ is $g$ free and $W'$ is $f$ free, there is nothing to check.)
\end{theorem}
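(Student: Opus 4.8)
The plan is to exploit commutativity throughout: since $C_0(X)$ is abelian, every element of $\Sc(f,g,\bar f,\bar g)$ is a monomial $f^a\bar f^b g^c\bar g^d$ (exponents $\ge 0$, not all $0$), the principal ideal of such an $A$ is simply $A\cdot(\Sc(f,g,\bar f,\bar g)\cup\{1\})$, and by \cite[Lemma 1.9]{PW21} the SI property is equivalent to the single requirement that $\bar A\in (A)$ for every monomial $A$. So the theorem splits into an ``SI part'' and a ``(non)simplicity part,'' and I would treat the two separately.

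For the SI part I would first prove the clean auxiliary equivalence: $\Sc(f,g,\bar f,\bar g)$ is SI if and only if there are words $W,W'\in\Sc(f,g,\bar f,\bar g)\cup\{1\}$ with $\bar f=fW$ and $\bar g=gW'$. The forward implication is immediate by applying the defining equation $A^*=XAY$ to the generators $A=f$ and $A=g$ and collapsing $XY$ to a single $W$ (resp.\ $W'$) by commutativity. For the converse, conjugating gives $f=\bar f\,\bar W$ and $g=\bar g\,\overline{W'}$, and then for $A=f^a\bar f^bg^c\bar g^d$ one checks directly that
$$A\cdot\bigl(W^a\bar W^b W'^c\overline{W'}^{\,d}\bigr)=(fW)^a(\bar f\bar W)^b(gW')^c(\bar g\overline{W'})^d=\bar f^{\,a}f^b\bar g^{\,c}g^d=\bar A ,$$
and since $W^a\bar W^b W'^c\overline{W'}^{\,d}\in\Sc(f,g,\bar f,\bar g)\cup\{1\}$ this places $\bar A$ in $(A)$. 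This shows $\bar f=fW,\ \bar g=gW'$ already force SI, so the occurrence (compatibility) clauses play no role in SI and are entirely about nonsimplicity.

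It then remains to match the compatibility clauses against the failure of the simplicity criterion of Proposition \ref{CN} ($S_f=S_g$ together with $\chi_{S_f}=fg\,V$ for some word $V$). The main tool is that $\bar f=fW$ forces $|W|=1$, hence $W\neq0$, on $S_f$: therefore if $g$ or $\bar g$ occurs in $W$ then $g\neq0$ on $S_f$, i.e.\ $S_f\subseteq S_g$, and symmetrically an occurrence of $f$ or $\bar f$ in $W'$ forces $S_g\subseteq S_f$. This immediately settles the case of unequal supports, where $S_f\neq S_g$ gives nonsimplicity by Proposition \ref{CN} while the support containments just recorded force the admissible occurrence pattern, so a compatible pair $(W,W')$ exists. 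For the equal-support case $S_f=S_g=:S$ I would pass to moduli and phases on $S$, writing $f=|f|\omega_f,\ g=|g|\omega_g$ with $|\omega_f|=|\omega_g|=1$, and translate each of the three statements ``$\chi_S=fgV$ solvable,'' ``some valid $W$ may be taken $g$-free,'' ``some valid $W'$ may be taken $f$-free'' into solvability of integer modulus/phase equations such as $|f|^{p+q}\equiv1$ and $\omega_f^{\,p-q+2}\equiv1$ on $S$; comparing these systems should yield the claimed equivalence and, when the simplicity criterion fails, produce the required representation (and conversely a proper ideal).

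The step I expect to be the genuine obstacle is exactly this equal-support bookkeeping: showing that the occurrence/compatibility clauses capture \emph{precisely} the negation of ``$\chi_S=fg\,V$ for some word $V$,'' rather than a merely sufficient or merely necessary variant. Concretely, the delicate points are (i) in the forward direction, manufacturing from nonsimplicity an admissible pair $(W,W')$ in which not both cross-occurrences appear, and (ii) in the converse, turning an admissible pair into an explicit proper ideal (for instance the set of elements that genuinely carry $g$-content, once that set is shown to be well defined at the level of functions rather than formal words). I expect the argument to hinge on tracking which monomials can equal $\chi_S$, and this is precisely where the intended reading of the occurrence conditions must be used with care.
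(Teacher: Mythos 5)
Your reduction of the SI half is correct and coincides with the paper's: the identity $W^a\bar W^bW'^c\overline{W'}^{\,d}\cdot A=\bar A$ shows that $\bar f=fW$ and $\bar g=gW'$ alone already force SI, so the compatibility clauses are indeed only about nonsimplicity. Your handling of the unequal-support case via $|W|=1$ on $S_f$ is also fine. But the equal-support case is not a side issue you can defer --- it is the entire content of the theorem (and, as it turns out, the case split is unnecessary), and you have explicitly left it as an ``obstacle'' with only a hoped-for plan (``comparing these systems should yield the claimed equivalence''). That is a genuine gap: you never prove that a simultaneous cross-occurrence ($g$ or $\bar g$ in $W$ \emph{and} $f$ or $\bar f$ in $W'$) forces simplicity, nor that an admissible pair forces nonsimplicity, which are the two implications the theorem actually asserts beyond the SI equivalence.

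The paper closes this gap with a short direct computation that avoids moduli/phases and avoids the support dichotomy altogether. If both cross-occurrences happen, commutativity lets you write $\bar f=fgW_1$ and $\bar g=fgW_2$; these give $S_f=S_g$ for free, and conjugating and multiplying all four identities yields $f\bar f g\bar g=(f\bar f g\bar g)^2\,W_1\bar W_1W_2\bar W_2$, whence $\chi_{S_f}=fg\bar f\bar g\,W_1\bar W_1W_2\bar W_2$ on $S_f\cup Z_f=X$, and Proposition \ref{CN} gives simplicity --- contradiction. Conversely, if the semigroup were simple, Proposition \ref{CN} produces $\chi_{S_f}=fgW''$, and then $\bar f=f(\bar f gW'')$, $\bar g=g(f\bar gW'')$ exhibit a valid pair with both cross-occurrences, violating the compatibility requirement. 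Your alternative idea for the converse --- building a proper ideal out of ``elements that genuinely carry $g$-content'' --- is exactly the kind of construction you yourself flag as possibly ill-defined at the level of functions (a monomial containing $g$ formally may equal one that does not), so it cannot be left at the level of a suggestion. To repair the proposal, replace the modulus/phase bookkeeping by the direct manipulation above.
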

\begin{proof}
Suppose the semigroup $\Sc(f, g, \bar{f}, \bar{g})$ is SI and nonsimple. Since $\Sc(f, g, \bar{f}, \bar{g})$ is SI, the principal ideals $(f)_{\Sc(f, g, \bar{f}, \bar{g})}$ and $(g)_{\Sc(f, g, \bar{f}, \bar{g})}$ are selfadjoint. Hence $$\bar{f} = fW \quad \text{and} \quad \bar{g} = gW',$$ for some words $W, W' \in \Sc(f, g, \bar{f}, \bar{g}) \cup \{1\}$. We next prove that if $g$ appears in $W$ then $W'$ is a word in $g, \bar{g}$ only; and if $f$ appears in $W'$ then $W$ is a word in $f, \bar{f}$ only. 
Suppose that $g$ appears in $W$ and $f$ appears in $W'$, then
\begin{equation}{\label{EE33}}
\bar{f} = fgW_1 \quad \text{and} \quad \bar{g} = fgW_2 \quad \text{for} \quad W_1, W_2 \in \Sc(f, g, \bar{f}, \bar{g}) \cup \{1\}.
\end{equation}
Then because $\bar{f} = fgW_1$ one has $S_g \supset S_f$ 
and from $ \bar{g} = fgW_2$ one has $S_g \subset S_f$,
one then obtains $S_f = S_g$. 
Then from (\ref{EE33}) one has 
$f = \bar{f}\bar{g}\bar{W_1} \quad \text{and} \quad g =\bar{f}\bar{g}\bar{W_2}$, 
and thus $\bar{f}f\bar{g}g = fgW_1\bar{f}\bar{g}\bar{W_1}fgW_2\bar{f}\bar{g}\bar{W_2} 
= (\bar{f}f\bar{g}g)^2W_1\bar{W_1}W_2\bar{W_2}$. 
And because from $S_f = S_g$ one has $S_f = S_{\bar{f}f\bar{g}g}$, it follows that
$\chi_{S_f} = fg\bar{f}\bar{g}W_1\bar{W_1}W_2\bar{W_2}$ with the product $W_1\bar{W_1}W_2\bar{W_2} \in \Sc(f, g, \bar{f}, \bar{g}) \cup \{1\}$.
Hence, by Proposition \ref{CN}, $\Sc(f, g, \bar{f}, \bar{g})$ becomes simple against nonsimplicity of $\Sc(f, g, \bar{f}, \bar{g})$. Therefore, if $g$ appears in $W$ (and also by a symmetric argument if $\bar{g}$ appears in $W$), then $W'$ is a word in $g, \bar{g}$ only. Similarly it follows that if $f$ appears in $W'$, then $W$ is a word in $f, \bar{f}$ only.

Conversely, suppose $\bar{f} = fW$ and $\bar{g} = gW'$ where $W$ and $W'$ are words in $f, g, \bar{f}, \bar{g}$ such that if $g$ appears in $W$, then $W'$ is a word in $g, \bar{g}$ only; and if $f$ appears in $W'$, then $W$ is a word in $f, \bar{f}$ only. We claim that $\Sc(f, g, \bar{f}, \bar{g})$ is not simple. Indeed, if it were simple, then by Proposition \ref{CN}, $S_f = S_g$ and $\chi_{S_f} = fgW''$ for some word $W'' \in \Sc(f, g, \bar{f}, \bar{g}) \cup \{1\}$. 
This implies that $\bar{f} = \bar{f}\chi_{S_f} = f\bar{f}gW''$ and $\bar{g} = \bar{g}\chi_{S_f} = gf\bar{g}W''$, contradicting the hypothesis that if $\bar{f} = fW$ and $\bar{g} = gW'$, then whenever $g$ appears in $W$, then $W'$ is a word in $g, \bar{g}$ only.

We next prove that $\Sc(f, g, \bar{f}, \bar{g})$ is SI. For this it clearly suffices to show that the principal ideal generated by any word is selfadjoint. Let $X \in \Sc(f, g, \bar{f}, \bar{g})$. Then $X = f^n\bar{f}^mg^k\bar{g}^l$ for some $n, m, k, l \geq 0$. 
 For $W, W'$ given in the hypothesis $\bar{f} = f W$ and $\bar{g} = gW'$ respectively, consider the word $W^n\overline{W}^mW'^k\overline{W'}^l \in \Sc(f, g, \bar{f}, \bar{g}) \cup \{1\}$. By multiplying this word by $X$, rearranging, and using $\bar{f}^j = f^jW^j$ for $j = n, m$ and $\bar{g}^i = g^iW'^i$ for $i = k, l$, one obtains $$W^n\overline{W}^mW'^k\overline{W'}^lX = f^nW^n\bar{f}^m\overline{W}^mg^kW'^k\bar{g}^l\overline{W'}^l = \bar{f}^nf^m\bar{g}^kg^l = X^*.$$
Therefore, $X^* \in (X)_{\Sc(f, g, \bar{f}, \bar{g})}$.
Since $X$ is an arbitrary word in $\Sc(f, g, \bar{f}, \bar{g})$, every principal ideal of $\Sc(f, g, \bar{f}, \bar{g})$ is selfadjoint, from which it follows easily that $\Sc(f, g, \bar{f}, \bar{g})$ itself is SI.
\end{proof}
\begin{corollary}\label{NCN}(Doubly generated normal operator nonsimple SI)
For two commuting normal operators $N_1,N_2\ne 0$, the semigroup $\Sc(N_1, N_2, N_1^{*}, N_2^{*})$ is SI and nonsimple if and only if $N_1^* = N_1W$ and $N_2^* = N_2W'$ for some $W$ and $W'$ words in $N_1, N_2, N_1^*, N_2^*$ such that if
 $N_2$ appears in $W$, then $W'$ is a word in $N_2, N_2^*$ only; and if $N_1$ appears in $W'$, then $W$ is a word in $N_1, N_1^*$ only. 
Alternatively this equivalence remains true replacing either or both of the operators by their adjoints. 
\end{corollary}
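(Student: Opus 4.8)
The plan is to derive Corollary \ref{NCN} from Theorem \ref{TSI} purely by transport of structure through the Gelfand isomorphism, in exact parallel to how Corollary \ref{operator} was obtained from Proposition \ref{CN} and Corollary \ref{operator simplicity} from Theorem \ref{CN arbitrary generators}. First I would record the standing reductions. Since $N_1, N_2$ are commuting normals, the Putnam--Fuglede theorem gives $N_i N_j^* = N_j^* N_i$, so the unital $C^*$-algebra $\mathcal A = C^*(N_1, N_2)$ is abelian and $\Sc(N_1, N_2, N_1^*, N_2^*) \subset \mathcal A$. By the Gelfand theorem the representation $\phi : \mathcal A \to C_0(X)$, with $X = \Omega(\mathcal A)$, is an isometric $*$-isomorphism; set $f = \phi(N_1)$ and $g = \phi(N_2)$, so that $\phi(N_1^*) = \bar f$ and $\phi(N_2^*) = \bar g$.

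Next I would observe that $\phi$ restricts to a semigroup isomorphism of $\Sc(N_1, N_2, N_1^*, N_2^*)$ onto $\Sc(f, g, \bar f, \bar g)$ carrying the four generators bijectively onto $f, \bar f, g, \bar g$. Because $\phi$ is a $*$-isomorphism it sends each ideal to an ideal and intertwines the adjoint with complex conjugation, so it carries selfadjoint ideals to selfadjoint ideals and induces a lattice isomorphism between the ideals of the two semigroups. Consequently both the SI property and (non)simplicity are preserved, so $\Sc(N_1, N_2, N_1^*, N_2^*)$ is SI and nonsimple if and only if $\Sc(f, g, \bar f, \bar g)$ is SI and nonsimple.

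At this point Theorem \ref{TSI} applies verbatim to $f, g$, giving that the latter holds exactly when $\bar f = f W_0$ and $\bar g = g W_0'$ for words $W_0, W_0'$ in $f, g, \bar f, \bar g, 1$ satisfying the stated restriction on which generators may appear. Applying $\phi^{-1}$ and writing $W := \phi^{-1}(W_0)$ and $W' := \phi^{-1}(W_0')$ (words in $N_1, N_2, N_1^*, N_2^*, I$) converts these into $N_1^* = N_1 W$ and $N_2^* = N_2 W'$. The one point that is not purely formal, and which I expect to be the only real thing to check, is that $\phi$ faithfully transports the combinatorial content of the word conditions: since $\phi$ maps the generating set $\{N_1, N_2, N_1^*, N_2^*\}$ bijectively onto $\{f, g, \bar f, \bar g\}$, the generator $N_2$ (resp.\ $N_2^*$) appears in $W$ if and only if $g$ (resp.\ $\bar g$) appears in $W_0$, and likewise for $N_1$ in $W'$; hence the clause ``if $N_2$ appears in $W$ then $W'$ is a word in $N_2, N_2^*$ only'' corresponds exactly to its function counterpart. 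This is routine once one notes that $\phi$ is injective on generators, so no genuine obstacle arises.

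Finally, I would account for the alternative formulation obtained by replacing either or both operators by their adjoints. The involution $\overline{(\cdot)}$ is an automorphism of the abelian semigroup $\Sc(f, g, \bar f, \bar g)$ with $S_f = S_{\bar f}$ and $S_g = S_{\bar g}$, and Theorem \ref{TSI} is stated symmetrically in $f/\bar f$ and $g/\bar g$ (its parenthetical ``or $\bar g$ appears in $W$'' clauses); for normal $N_i$ the relation $N_i^* = N_i W$ is moreover equivalent under adjoints to $N_i = N_i^* W^*$. Transporting this symmetry back through $\phi$ shows that the characterization is unchanged under interchanging $N_i \leftrightarrow N_i^*$, which yields the asserted alternative statement.
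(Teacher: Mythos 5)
Your proposal is correct and matches the paper's intended argument: the paper gives no separate proof of Corollary \ref{NCN}, treating it as an immediate consequence of Theorem \ref{TSI} via the Gelfand $*$-isomorphism $\phi$, exactly as you do (and exactly as Corollaries \ref{operator} and \ref{operator simplicity} are obtained from their function versions). Your explicit check that $\phi$ transports the combinatorial word conditions faithfully is a reasonable point to flag, but it is the same routine transport-of-structure step the paper implicitly relies on.
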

Interestingly we see here that $\Sc(N_1, N_2, N_1^{*}, N_2^{*})$ cannot be singly generated since otherwise it would be simple by \cite[Theorem 2.1]{PW21}.

The 3 generator case remains open and hence also the arbitrary generator case.

\section{SI semigroups $\Sc(T, T^*)$ for $T$ a unilateral weighted shift}
Unilateral weighted shifts are often considered as a litmus test by operator theorists whenever a new concept is introduced. We next investigate the SI property for $\Sc(T, T^*)$ for $T$ a unilateral weighted shift of infinite rank.
%We study the SI property of $\Sc(T, T^*)$ generated by a weighted shift $T$ by asking a natural question in light of Corollary \ref{corollary2.6}: Is there an infinite-dimensional analogue of this result? That is, can one obtain a characterization for $\Sc(T, T^*)$ for $T$ an infinite rank weighted shift? We provide an affirmative answer to this question for a special class of weights in Theorem~\ref{theorem 3.1} below. Furthermore, 
As pointed out in \cite[second last paragraph of Introduction]{PW21} that SI semigroups are mostly simple and rarely nonsimple, construction of nonsimple SI semigroups are, in general, difficult. Here we provide a class of examples of nonsimple SI semigroups in Corollary \ref{nonsimple SI semigroups}, i.e., the SI semigroups generated by weight sequences with zero gap are always nonsimple. These singly generated selfadjoint semigroups are subsemigroups of the class of all weighted shifts relative to a fixed basis. 

From here on we refer to unilateral weighted shifts simply as weighted shifts. First recall the definition of a  weighted shift $T$: for $\{e_n \mid n\geq 1\}$ an orthonormal basis of $\mathcal H$ and $\{\alpha_n\}$ a bounded sequence of complex scalars not all zero, the operator defined by $Te_n = \alpha_ne_{n+1}$ for $n \geq 1$ and extended by linearity is the  weighted shift with weight sequence $\{\alpha_n\}$.

 A few facts used in the later parts of this section are: If $T$ is the  weighted shift with weight sequence $\{\alpha_n\}$, then for $i\geq 1$ and $m\geq 1$,
\begin{equation}\label{W0}
Te_i = \alpha_ie_{i+1},\, {T^*}e_{i+1}=\bar{\alpha}_ie_i \text{ and } T^*e_1 = 0.
\end{equation}  
Equation (\ref{W0}) implies that for $k \geq 1$ and $m \geq 1$,
\begin{equation} \label{def1'}
T^me_k=(\Pi_{j=k}^{k+m-1}\alpha_j)e_{k+m}
\end{equation} and 
\begin{equation}\label{def2'}
{T^*}^me_k=
\begin{cases} 
0 &  \quad \text{ for } 1\leq k \leq m\\
(\Pi_{k-m}^{k-1}\bar{\alpha}_j)e_{k-m} & \quad \text{ for } k \geq m+1 \\ 
\end{cases}
\end{equation}
Furthermore, the matrix representation in the basis $\{e_n\}$ of each word in $T$ and $T^*$ has exactly one nonzero diagonal (that is, strictly lower,  strictly upper or main diagonal). Going forward in this paper, it will be clear from Proposition~\ref{multiplication} that if $A, B$ are any two diagonal matrices (that is, strictly lower, strictly upper or main diagonal), then their product $AB$ is a strictly upper, strictly lower or main diagonal matrix. (We make a note here about $\Sc(T, T^*)$ in the context of graded semigroups which is not related to our study of the SI property, but could be of independent interest. Specifically, our singly generated selfadjoint semigroup $\Sc(T, T^*)$ generated by a (unilateral) weighted shift $T$ forms a subsemigroup of the semigroup generated by the set of all weighted shift operators and it forms a \textit{strongly} $\mathbb Z$-graded semigroup \cite[Definition 2.9]{HM} where $\mathbb Z$ is the additive group; and by taking the set $X = \mathbb Z$ and the semigroup $\mathcal S = \Sc(T, T^*)$ in \cite[first paragraph, Section 2.3]{HM}, we observe that the set $\mathbb Z$ is an $\mathcal S$-biset, but not pointed.)

We recall that by \cite [Corollary 1.15]{PW21}, if $T$ is a power partial isometry, then $\Sc(T,T^*)$ is always SI. The converse need not be true. Indeed one can construct examples where $T$ is a partial isometry and $\Sc(T, T^*)$ is SI, but still $T$ is not a power partial isometry. For example, consider the $2 \times 2$ matrix with the first column $1/\sqrt 2$ and the second column zero. Then, by \cite[Lemma 3.14]{PW21}, the semigroup generated by this matrix is SI because the trace is a nonzero real number, but this matrix is not a power partial isometry by \cite[Proposition 3.6]{PW21} because the trace of this matrix is $1/\sqrt 2 \neq 0$ and not on the unit circle.
	%For the converse, note that even if we consider $T$ a partial isometry and $\Sc(T, T^*)$ an SI semigroup, $T$ need not be a power partial isometry as seen in Example \ref{E1}. 
	It is proved in the following theorem that if we consider a certain class of  weighted shifts (that is, when their weight sequence has a zero gap as defined in the theorem below), then the converse holds. Note  an easy computation tells us that for $T$ a  weighted shift with weight sequence $\{\alpha_n\}$, $T$ is a power partial isometry if and only if $|\alpha_{n}| \in \{0,1\}$ for $n\geq 1$. Theorem \ref{theorem 3.1} below says that this equivalence is further equivalent to $\Sc(T, T^*)$ being SI for a restricted class of weight sequences. (The case when there are no zero gaps is more difficult in characterizing which possess the SI property, even in providing only necessary conditions
	which will be covered later in Theorem \ref{THEOREM}.)
	
But first for extensive use in the proof of the next Theorem \ref{theorem 3.1}, we prove the following proposition. We shall refrain from referencing it when we use it.

Relative to the basis $\{e_n\}$, for diagonal $D = \diag \{d_n\}$ and $T$ a weighted shift with weights $\{\alpha_i\}$, $T^k$ is the $k^{th}$ subdiagonal with weights $\beta_i$ given by Equations (\ref{def1'})-(\ref{def2'}). Then for $x = \sum x_ne_n \in \mathcal H$, one has 
$DT^kx = \sum d_{n+k}\beta_n x_n e_{n+k}$ 
and solving for $y$ in $DT^kx = T^ky = \sum \beta_n y_n e_{n+k}$ yields $y_n = d_{n+k} x_n$ putting $y = \sum y_n e_n \in \mathcal H$, That is, 

\begin{proposition}\label{DT^k, DT^{*k} range lemma}
For $D$ a diagonal matrix and $T$ a weighted shift: 
$\range DT^k \subset \range T^k$ and $\range DT^{*k} \subset \range T^{*k}$ for all $k \ge 1$.
\end{proposition}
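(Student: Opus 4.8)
The plan is to reduce both inclusions to the elementary fact that $\range(AB)\subset\range A$ for any $A,B\in B(\mathcal H)$, by establishing a commutation identity that pushes the diagonal $D$ to the right of the shift power. Writing $D=\diag\{d_n\}$ and recalling from Equation~(\ref{def1'}) that $T^k$ is the $k$-th subdiagonal operator with $T^ke_n=\beta_ne_{n+k}$ (where $\beta_n=\Pi_{j=n}^{n+k-1}\alpha_j$), I would introduce the \emph{shifted} diagonal $\widetilde D:=\diag\{d_{n+k}\}$ and claim that
\begin{equation*}
DT^k=T^k\widetilde D.
\end{equation*}
Granting this, since $\widetilde D\in B(\mathcal H)$ the inclusion $\range DT^k=\range(T^k\widetilde D)\subset\range T^k$ is immediate.

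To verify the identity it suffices to check it on the basis $\{e_n\}$. On one hand $DT^ke_n=D(\beta_ne_{n+k})=d_{n+k}\beta_ne_{n+k}$, and on the other $T^k\widetilde De_n=T^k(d_{n+k}e_n)=d_{n+k}\beta_ne_{n+k}$, so the two operators agree on every $e_n$ and hence coincide. Boundedness of $\widetilde D$ is clear, since $\{d_{n+k}\}$ is obtained from the bounded sequence $\{d_n\}$ by discarding the first $k$ entries; indeed $\|\widetilde D\|\le\|D\|$.

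For the adjoint case I would argue identically. By Equation~(\ref{def2'}), $T^{*k}$ is the $k$-th superdiagonal operator with $T^{*k}e_n=\gamma_ne_{n-k}$ for $n\ge k+1$ and $T^{*k}e_n=0$ for $1\le n\le k$. Setting $\widetilde D':=\diag\{d'_n\}$ with $d'_n:=d_{n-k}$ for $n\ge k+1$ and $d'_n:=0$ for $1\le n\le k$, the same basis-vector computation gives $DT^{*k}=T^{*k}\widetilde D'$ (for $1\le n\le k$ both sides are $0$, so the choice of $d'_n$ there is irrelevant), whence $\range DT^{*k}\subset\range T^{*k}$.

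This lemma is routine, and there is essentially no obstacle beyond bookkeeping; the only point warranting a moment's care is the possibility that some weight product vanishes, i.e. $\beta_n=0$ (equivalently a vanishing factor $\alpha_j$). This causes no difficulty for the factorization route, because in that case both sides of $DT^ke_n=T^k\widetilde De_n$ are simply $0$: the commutation identity is an exact operator equation rather than a coefficient-by-coefficient division, so one never needs to invert $\beta_n$. This is precisely what makes the operator-factorization argument cleaner than the direct preimage construction $y_n=d_{n+k}x_n$ sketched before the statement, where one must separately observe that $\beta_ny_n=d_{n+k}\beta_nx_n$ holds automatically when $\beta_n=0$.
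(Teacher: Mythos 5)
Your proof is correct and is essentially the paper's argument in operator form: the commutation identity $DT^k = T^k\widetilde D$ with $\widetilde D = \diag\{d_{n+k}\}$ is exactly the paper's preimage construction $y_n = d_{n+k}x_n$ packaged as a bounded operator, and likewise for the adjoint case. Your version is marginally tidier in that it records the boundedness of $\widetilde D$ and makes transparent that no division by a possibly vanishing weight product $\beta_n$ is needed, but the underlying idea coincides with the paper's.
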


\begin{theorem}\label{theorem 3.1}
	For $T$ a  weighted shift with weight sequence $\{\alpha_n\}$ with its weight sequence having a zero gap (that is, for some $i \geq 1$ one has $\alpha_i \neq0$ and $\alpha_{i+1}= 0$), then the following are equivalent.
	\begin{enumerate}[label=(\roman*)]
	\item $\Sc(T,T^*)$ is an SI semigroup.
	\item $|\alpha_{n}| \in \{0,1\}$ for $n\geq 1$.
	\item $T$ is a power partial isometry.
		\end{enumerate}
		Furthermore, these SI semigroups $\Sc(T, T^*)$  are always nonsimple (see Corollary \ref{nonsimple SI semigroups} below).	\end{theorem}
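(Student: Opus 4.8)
The plan is to prove the equivalence by disposing of the two easy directions first and then concentrating on the implication that carries all the content, namely (i)$\Rightarrow$(ii). The equivalence (ii)$\Leftrightarrow$(iii) is exactly the elementary computation recorded just before the theorem: for a weighted shift, $T^n$ is a partial isometry for every $n$ precisely when each finite product $\prod_{j=k}^{k+n-1}|\alpha_j|$ lies in $\{0,1\}$, which over all $n$ amounts to $|\alpha_n|\in\{0,1\}$ for every $n$, so I would simply invoke it. For (iii)$\Rightarrow$(i) I would cite \cite[Corollary 1.15]{PW21}, giving that $\Sc(T,T^*)$ is SI whenever $T$ is a power partial isometry. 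Note that neither of these two directions uses the zero-gap hypothesis.

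The heart of the matter is (i)$\Rightarrow$(ii), where I would first extract the structural consequence of the zero gap. Since $\alpha_{i+1}=0$, the subspace $\mathcal H_1:=\spans\{e_1,\dots,e_{i+1}\}$ reduces $T$ (both $T\mathcal H_1\subseteq\mathcal H_1$ and $T^*\mathcal H_1\subseteq\mathcal H_1$ follow from \eqref{W0}), so $T=T_1\oplus T_2$ with $T_1$ a finite nilpotent weighted shift and $T_2$ a weighted shift on $\mathcal H_2=\mathcal H_1^{\perp}$, and every element of $\Sc(T,T^*)$ has the form $w(T_1)\oplus w(T_2)$ for a common word $w$. Here I would stress the one point that makes the argument delicate: the membership $T^*\in(T)_{\Sc(T,T^*)}$ \emph{by itself} does not force (ii). Indeed $T^*=TT^*T^*$ holds as an operator identity for any weight sequence with $|\alpha_{k-2}|=1$ whenever $\alpha_{k-1}\neq0$, which permits weights outside $\{0,1\}$ sitting just before a zero; hence the full SI property (selfadjointness of the ideal of \emph{every} word, not merely of $T$) must be used.

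The mechanism I would run is coefficient-matching controlled by ranges. For a word $A$ of net degree $d\ge0$ one has $A=D_AT^{d}$ for a diagonal $D_A$, so by Proposition~\ref{DT^k, DT^{*k} range lemma} the SI witness $A^*=XAY$ forces $\range A^*\subseteq\range X$, and when $X$ has positive net degree $d_X$ this yields $\range A^*\subseteq\range T^{d_X}$. The zero gap obstructs such inclusions: a nonzero weight $\alpha_m$ with $m\in\{i+2,\dots,i+1+d_X\}$ would place $e_m\in\range A^*$ while $\alpha_{i+1}=0$ keeps $e_m\notin\range T^{d_X}$. Combined with the adjoint inclusion $\range A\subseteq\range Y^*$ constraining $Y$ and the degree balance $d_X+d_Y=-2d$, this confines the admissible witnesses so that the associated word-walks cannot cross the zero edge, localizing the scalar identity $A^*=XAY$ near each nonzero weight. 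Matching the surviving scalars on a single basis vector then gives, for each $m$ with $\alpha_m\neq0$, a relation of the shape $|\alpha_m|^{2p+1}=|\alpha_m|$ with $p\ge0$, i.e. $|\alpha_m|\in\{0,1\}$; since the witnessing word is shared across the two reducing blocks, the relations anchored at the gap propagate to every index, including the tail $T_2$.

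The step I expect to be the main obstacle is exactly this control of \emph{arbitrary-length} witnesses $X,Y$ together with the propagation of unimodularity from the gap to all indices; this is where the zero-gap hypothesis is indispensable, and its necessity is consistent with the fact that gap-free weight sequences admit only a necessary condition (Theorem~\ref{THEOREM}) rather than $|\alpha_n|\in\{0,1\}$. Finally, for the closing assertion that these SI semigroups are nonsimple I would reuse the reducing decomposition: since $T_1^{\,i+1}=0$, every element $XT^{i+1}Y$ has zero component on $\mathcal H_1$, whereas $T$ has component $T_1\neq0$ there, so $T\notin(T^{i+1})_{\Sc(T,T^*)}$ and the latter (or $\{0\}$, if $T^{i+1}=0$) is a proper ideal; this is carried out in Corollary~\ref{nonsimple SI semigroups}.
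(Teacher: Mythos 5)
Your treatment of (ii)$\Leftrightarrow$(iii), of (iii)$\Rightarrow$(i) via \cite[Corollary 1.15]{PW21}, and of the nonsimplicity claim via the reducing subspace $\spans\{e_1,\dots,e_{i+1}\}$ all match the paper. The problem is (i)$\Rightarrow$(ii), where your plan rests on a false premise and then leaves the actual work undone. You assert that $T^*\in(T)_{\Sc(T,T^*)}$ by itself cannot force (ii), offering $T^*=TT^*T^*$ as an identity compatible with weights outside $\{0,1\}$. But evaluating $T^*=TT^{*2}$ at $e_2$ gives $\bar{\alpha}_1e_1=0$, so $\alpha_1=0$; at $e_3$ it gives $\bar{\alpha}_2=|\alpha_1|^2\bar{\alpha}_2=0$, and inductively every weight vanishes, so this identity holds only for $T=0$. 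In fact the paper's proof of (i)$\Rightarrow$(ii) uses nothing beyond the single relation $T^*=XTY$ coming from selfadjointness of the one principal ideal $(T)_{\Sc(T,T^*)}$: the gap yields the range obstructions $\range T^*\not\subset\range T$, $\range T\not\subset\range T^*$, $\range T^*\not\subset\range T^{*2}$, and $e_r\notin\range T^*T^2$ (with $r$ the first index of a nonzero weight), and a case analysis over the six word forms then forces $XTY=(T^*T)^mT^*$; right-multiplying by $T$ gives the diagonal identity $T^*T=(T^*T)^{m+1}$, i.e.\ $|\alpha_j|^2=|\alpha_j|^{2(m+1)}$ for \emph{every} $j$ simultaneously, which is (ii).

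By contrast, your proposed mechanism --- running $A^*=XAY$ for arbitrary words $A$, ``localizing'' the scalar identity near each nonzero weight, and ``propagating'' unimodularity from the gap to the tail because the witness word is shared across the two reducing blocks --- is precisely the part you flag as ``the main obstacle,'' and it is never carried out. It is also unclear how it could close as stated: the tail block $T_2$ has no zero gap, and word identities for a gap-free shift do not by themselves force unit-modulus weights (the shift with weights $(2,1,1,\dots)$ generates a simple, hence SI, semigroup; see Remark~\ref{alpha1}), so a per-index, per-block localization cannot reach the tail without the global step the paper supplies, namely that the witness in $T^*=XTY$ must be a diagonal times $T^*$, whence the single equation $T^*T=(T^*T)^{m+1}$ constrains all diagonal entries at once. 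As written, the implication (i)$\Rightarrow$(ii) is not proved.
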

\begin{proof}
(i)$\Rightarrow$(ii). We first claim that facts useful in this proof that $\range T \not\subset  \range T^*$ and $\range T \not\supset \range T^*$. Indeed, by hypothesis, for some $i \geq 1$, $\alpha_i \neq0$ and $\alpha_{i+1} = 0$, so $e_{i+1} \in \range T$ but $e_{i+1} \not \in \range T^*$ (follows from Equation (\ref{W0})). This implies that $\range T \not\subset  \range T^*$. For the reverse non-inclusion, let $r$  be the smallest index such that $\alpha_r \neq 0$. If $r=1$, then $e_1 \in \range T^*$ but $e_1 \not \in \range T$. If $r>1$, then $\alpha_1 = \cdots = \alpha_{r-1} = 0$ and so $e_r \in \range T^*$, but $e_r \notin \range T$ (follows below from the case $m=1$ in Equation (\ref{W1})). Hence, $\range T^* \not\subset \range T$.

We note here some observations (derived from Equations (\ref{def1'})-(\ref{def2'}) and from the fact that $T^*T$ is the diagonal with weights $|\alpha_n|^2$) that will be used in proving that if $\Sc(T,T^*)$ is SI then $|\alpha_{n}| \in \{0,1\}$ for $n\geq 1$.  
	
	For $x \in \mathcal H$, $x = \sum^{\infty}_{j=1}a_je_j$  where $a_j$'s are the Fourier coefficients of $x$ with respect to the orthonormal basis $\{e_j\}$ and for $m\geq 1$, one has 
	
	\begin{equation}\label{W1}
	\range T^m=\{T^mx \mid x \in \mathcal H\} =\{\sum^{\infty}_{j=1}a_j\alpha_j\cdots\alpha_{m+j-1}e_{j+m} \mid x \in \mathcal H\} 
	\end{equation}
	\begin{equation}\label{W2}
	\range {T^*}^2 = \{{T^*}^2x \mid x\in \mathcal H\} = \{\sum^{\infty}_{j\geq 3}a_j\bar{\alpha}_{j-2}\bar{\alpha}_{j-1}e_{j-2} \mid x \in \mathcal H\}
	\end{equation}
	\begin{equation}\label{W3}
	\range T^*T^2 = \range (T^*T)T  = \{(T^*T)Tx \mid x \in \mathcal H\} =\{\sum^{\infty}_{j=1}a_j\alpha_j|\alpha_{j+1}|^2e_{j+1} \mid x \in \mathcal H\}.
	\end{equation}
	\begin{equation}\label{W4}
	\range (T^*T)^mT^*T^2 = \range (T^*T)^{m+1}T  = \{(T^*T)^{m+1}Tx \mid x \in \mathcal H\} =\{\sum^{\infty}_{j=1}a_j\alpha_{j}|\alpha_{j+1}|^{2(m+1)}e_{j+1} \mid x \in \mathcal H\}.
	\end{equation}

	Since $\Sc(T,T^*)$ is SI, the principal ideal $(T)_{\Sc(T,T^*)}$ is selfadjoint. Therefore, $T^*=XTY$ for some $X,Y \in \Sc(T,T^*) \cup\{I\}$ and $X,Y$ cannot both be the identity operator (since otherwise $T$ would be selfadjoint but $T$ being a shift, is clearly not selfadjoint).
	
	We next claim that $T^* = T^*X'TY'T^*$ for some $X',Y' \in \Sc(T,T^*) \cup\{I\}$.
	As we proved above, $\range T^* \not\subset \range T$, so from $T^*=XTY$ one has $X \ne I$ nor can  $X$  start with $T$, and hence must start with $T^*$. That is, $XTY={T^*}^mX'TY$ for some $m\geq 1$ and $X'\in \Sc(T,T^*) \cup\{I\}$. 
	We now claim that $m = 1$. Otherwise $m \ge 2$ and $T^* = XTY={T^*}^2{T^*}^{m-2}X'TY$ 
	(interpreting $T^{*0}$ to mean absence) which  implies that 
	$\range T^* \subset \range {T^*}^2$. But $\range T^* \not \subset \range {T^*}^2$ because by hypothesis, $\alpha_i \neq0$ and $\alpha_{i+1}= 0$, so $e_i \in \range T^*$ but $e_i \not \in \range {T^*}^2$ (via Equations (\ref{W0}),(\ref{W2})), a contradiction. Next, $XTY$ must end with $T^*$.  
	Indeed, if $XTY$ ends with $T$, then $T^* = XTY'T$ for some $Y'\in \Sc(T,T^*) \cup\{I\}$. Taking adjoints we obtain $T = T^*Y'^*T^*X^*$ which implies that $\range T \subset \range T^*$, contradicting $\range T \not\subset  \range T^*$ which we proved in the first paragraph of this proof.
	
	We next show that $XTY$ starts with $T^*$ then alternates between $T$ and $T^*$ and ends with $T^*$, that is, $XTY=(T^*T)^mT^*$ for some $m\geq 1$. 
	But first let $r$  be the smallest index for which $\alpha_r \neq 0$, 
	so for $r>1$, $\alpha_1 = \cdots = \alpha_{r-1} = 0$. 
	And note if $r=1$, then $e_1 \in \range T^*$ 
	but by Equation (\ref{W3}), $e_1 \not \in \range T^*T^2$.

	Now suppose otherwise that $XTY$ is a word in powers of $T$ and $T^*$, beginning with $T^*T$ and ending in $T^*$ (as proved just above), but with at least one higher power of $T$ or $T^*$ appearing.  There are clearly three possiblilities for the beginning terms of $XTY$ beginning with $T^*T$ and ending in $T^*$: 
	
	$T^* = XTY = T^*T^kY'$ for some $k \ge 2$ and $Y' \in \Sc(T,T^*)$ beginning and ending with $T^*$;
	
	$(T^*T)^mT^kY'$ for some $m \ge 1, k \ge 1$ for some $Y' \in \Sc(T,T^*)$ beginning and ending with $T^*$;
	
	or $(T^*T)^mT^{*k}Y'$ for some $m \ge 1, k \ge 1$ for some $Y' \in \Sc(T,T^*) \cup \{I\}$ but when $Y' \ne I$, then it begins with $T$ and ends with $T^*$.
	
	The first case fails since $e_r \in \range T^* \setminus \range T$ via the first paragraph of proof and since 
	$T^*T$ is diagonal,  has range $ (T^*T)T$ included in $\range T$, and together with $T^* = (T^*T)T^{k-1}Y'$ (with $k - 1 \ge 1$) implies $\range T^* \subset \range T$, 
	contradicting $e_r \in \range T^* \setminus \range T$.
	
	The second case fails because 
	for $(T^*T)^mT^kY'$, since $(T^*T)^m$ is diagonal, $(T^*T)^mT$ has range included in $\range T$ which implies
	$T^* = (T^*T)^mT^kY'$ has $\range T^* \subset \range T$, again a contradiction as just before.
	
	And the third case fails for $k \ge 2$ but passes for $k=1$ or leads naturally, by increasing $m$ and repeating the process to case 2 or 3 again, towards the concluding form $XTY=(T^*T)^mT^*$.
	
	To show the third case fails for $k \ge 2$, since $(T^*T)^m$ is diagonal, one has 
	$\range T^* = \range (T^*T)^mT^{*k}Y' \subset \range (T^*T)^mT^{*2} \subset \range T^{*2}$.
	Hence it suffices to show $\range T^* \not\subset \range T^{*2}$ to obtain a contradiction. 
	But now recall from the first paragraph of this proof that $\alpha_i \ne 0$ and $\alpha_{i+1} = 0$ implies 
	$e_i \in \range T^*$ but it is easy to see using Equation (\ref{W2}) that $e_i \notin  \range T^{*2}$.
	
	From $T^*=XTY=(T^*T)^mT^*$ for some $m\geq 1$, and since $T^*T= \diag(|\alpha_1|^2,|\alpha_2|^2,|\alpha_3|^2,\dots)$, 
	by right multiplying $T$ we get $T^*T=(T^*T)^{m+1}$. And because $T^*T$ is diagonal, by equating the diagonal entries in this equation one obtains $|\alpha_j| \in \{0,1\}$.
	
	(ii)$\Rightarrow$(iii) Since $|\alpha_n| \in \{0,1\}$, ${T^*}^kT^k$ is a projection by computation, and hence $T^k$ is a partial isometry for each $k\geq 1$, that is,  
	$T$ is a power partial isometry.
	
	(iii)$\Rightarrow$(i) By \cite[Corollary 1.15]{PW21}, $\Sc(T,T^*)$ is SI since $T$ is a power partial isometry.
\end{proof}

In the next proposition, we prove that all semigroups $\Sc(T,T^*)$ generated by weighted shifts with the weight sequence having a zero gap are always nonsimple. In particular, all the SI semigroups $\Sc(T,T^*)$ provided by the characterization in Theorem \ref{theorem 3.1} are nonsimple.
\begin{proposition}\label{nonsimple semigroups}
	The semigroups $\Sc(T,T^*)$ generated by weighted shifts $T$ with the weight sequence $\{\alpha_n\}$ that has the gap property ($\alpha_i\ne 0, \alpha_{i+1}=0$ for some $i\ge 1$) are nonsimple. 
\end{proposition}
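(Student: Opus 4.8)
The plan is to exhibit an explicit proper two-sided ideal of $\Sc(T,T^*)$, which by Definition \ref{D3} and the definition of simplicity immediately forces nonsimplicity. The crucial structural observation is that the zero gap decouples a finite-dimensional nilpotent block. Set $\mathcal H_1 := \spans\{e_1,\dots,e_{i+1}\}$ and $\mathcal H_2 := \spans\{e_{i+2}, e_{i+3},\dots\}$, so that $\mathcal H = \mathcal H_1 \oplus \mathcal H_2$. Using only $Te_n = \alpha_n e_{n+1}$, $T^*e_{n+1} = \bar{\alpha}_n e_n$ and $T^*e_1 = 0$ from Equation (\ref{W0}), together with $\alpha_{i+1}=0$, I would check that both $T$ and $T^*$ leave $\mathcal H_1$ invariant: the only border cases are $Te_{i+1} = \alpha_{i+1}e_{i+2}=0$ and $T^*e_{i+2}=\bar{\alpha}_{i+1}e_{i+1}=0$, and both vanish precisely because $\alpha_{i+1}=0$. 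Hence $\mathcal H_1$ reduces $T$, and we obtain a block decomposition $T = T_1 \oplus T_2$, where $T_1 = T|_{\mathcal H_1}$ is a weighted shift on the $(i+1)$-dimensional space $\mathcal H_1$ with weights $\alpha_1,\dots,\alpha_i$, and $T_2 = T|_{\mathcal H_2}$ is the weighted shift with the remaining weights. Consequently every word $A \in \Sc(T,T^*)$ is block diagonal, $A = A_1 \oplus A_2$ with $A_1 = A|_{\mathcal H_1}$, and the compression $A \mapsto A_1$ is a semigroup homomorphism into $B(\mathcal H_1)$.

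Next I would define $J := \{A \in \Sc(T,T^*) : A|_{\mathcal H_1} = 0\}$, the words whose first block vanishes, and verify it is a two-sided ideal: if $A_1 = 0$, then for any $X = X_1\oplus X_2 \in \Sc(T,T^*)$ one has $(XA)_1 = X_1 A_1 = 0$ and $(AX)_1 = A_1 X_1 = 0$, so $XA, AX \in J$. Equivalently, $J$ is the preimage under the compression homomorphism of the ideal $\{0\}$ in the image semigroup.

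It then remains to see that $J$ is nonempty and proper. Nonemptiness comes from nilpotency: since $T_1$ is a weighted shift on an $(i+1)$-dimensional space it satisfies $T_1^{i+1} = 0$, so the word $T^{i+1} \in \Sc(T,T^*)$ has first block $T_1^{i+1} = 0$ and therefore lies in $J$. Properness comes from $\alpha_i \neq 0$: the generator $T$ has first block $T_1 \neq 0$ (its weight $\alpha_i$ is nonzero), so $T \notin J$ and $J \neq \Sc(T,T^*)$. Thus $J$ is a nonempty proper ideal and $\Sc(T,T^*)$ is nonsimple.

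There is no serious obstacle once the decoupling is spotted; the one point to state carefully is that the gap condition is exactly what makes $\mathcal H_1$ reduce $T$ (both border cases vanish), and that the two features of the gap are used separately: $\alpha_{i+1}=0$ produces both the reducing decomposition and the vanishing element $T^{i+1}$, while $\alpha_i \neq 0$ guarantees the ideal is proper. I note also that the argument is insensitive to whether the zero operator happens to occur as a word: if some word equals $0$ then $\{0\}$ is already a proper ideal, and in any case such a word lies in the ideal $J$ constructed above.
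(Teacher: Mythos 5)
Your proposal is correct and takes essentially the same approach as the paper: the paper likewise uses the gap to show $\mathcal M=\spans\{e_1,\dots,e_{i+1}\}$ reduces $T$, notes that $T^{i+1}$ restricted to $\mathcal M$ vanishes by nilpotency, and concludes that the principal ideal $(T^{i+1})_{\Sc(T,T^*)}$ is proper since $Te_i=\alpha_ie_{i+1}\ne 0$. The only cosmetic difference is that you work with the full ideal of words vanishing on $\mathcal H_1$ (the preimage of $\{0\}$ under the compression homomorphism), whereas the paper exhibits the principal ideal generated by $T^{i+1}$, which is contained in yours; both arguments rest on exactly the same facts.
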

\begin{proof}
	To prove the nonsimplicity of $\Sc(T,T^*)$, it suffices to find a proper ideal. In particular, we prove $(T^{i+1})_{\Sc(T,T^*)}$ is a proper ideal that does not contain $T$, where $i \ge 1$ is the gap spot. One can check that the gap ensures that the subspace $\mathcal{M}:=\spans\{e_1,e_2,\dots,e_{i+1}\}$ is a reducing subspace for $T$. Let $A:=T\mid\mathcal{M}$ and $S:=T\mid \mathcal{M}^{\perp}$. Then $A$ is a weighted shift matrix (w.r.t. the basis $\{e_1,e_2,\dots,e_{i+1}\}$) of size $i+1$, which is nilpotent of nilpotency degree $\le i+1$, and $S$ is a weighted shift on $\mathcal{M}^{\perp}$ ( w.r.t. the basis $\{e_n\}_{n\ge i+2}$) with weight sequence $\{\alpha_n\}_{n\ge i+2}$. Therefore $T^{i+1}=A^{i+1}\oplus S^{i+1}=0\oplus S^{i+1}$, where $0$ denotes the zero matrix of size $i+1$. Then for any $B\in (T^{i+1})_{\Sc(T,T^*)}$ one has $B=XT^{i+1}Y$ for some $X,Y\in \Sc(T,T^*)\cup\{I\}.$ Then the equation $B=XT^{i+1}Y$ can be rewritten w.r.t. $\mathcal{M}\oplus \mathcal{M}^{\perp}$ as $B=X_1\oplus X_2(0\oplus S^{i+1})Y_1\oplus Y_2=0\oplus X_2S^{i+1}Y_2$, where $X_1,Y_1\in \Sc(A,A^*)\cup\{I_1\}$ and $X_2,Y_2\in \Sc(S,S^*)\cup \{I_2\}$ and $I_1,I_2$ are indentity operators on $\mathcal{M},\mathcal{M}^{\perp}$ respectively. Hence $B\mid \mathcal{M}=0$ for every $B\in (T^{i+1})_{\Sc(T,T^*)}$, which implies that $T\notin(T^{i+1})_{\Sc(T,T^*)}$ because $Te_i=\alpha_ie_{i+1}\ne 0$. Therefore, $(T^{i+1})_{\Sc(T,T^*)}$ is a non-trivial ideal and so $\Sc(T,T^*)$ is a nonsimple semigroup.
\end{proof}
A special case of the above proposition is
\begin{corollary}\label{nonsimple SI semigroups}
	The SI semigroups $\Sc(T,T^*)$ provided by the characterization in Theorem \ref{theorem 3.1} are nonsimple.
\end{corollary}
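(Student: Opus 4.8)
The plan is to recognize this statement as nothing more than a specialization of the preceding Proposition \ref{nonsimple semigroups}, which has already done all the real work. By Theorem \ref{theorem 3.1}, the SI semigroups under consideration are exactly those $\Sc(T,T^*)$ for which $T$ is a weighted shift whose weight sequence $\{\alpha_n\}$ has a zero gap (for some $i\ge 1$, $\alpha_i\ne 0$ and $\alpha_{i+1}=0$) together with the additional normalization $|\alpha_n|\in\{0,1\}$. The first step is simply to note that, in particular, such a generator $T$ possesses the gap property $\alpha_i\ne 0,\ \alpha_{i+1}=0$, which is the sole hypothesis appearing in Proposition \ref{nonsimple semigroups}.

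Consequently, I would invoke Proposition \ref{nonsimple semigroups} directly. Since its conclusion of nonsimplicity holds for \emph{every} weighted shift semigroup with the gap property, independently of whether the extra condition $|\alpha_n|\in\{0,1\}$ is imposed, it applies verbatim to the SI semigroups characterized in Theorem \ref{theorem 3.1}. Concretely, the proper ideal exhibited in the proof of Proposition \ref{nonsimple semigroups}, namely $(T^{i+1})_{\Sc(T,T^*)}$ with $i$ the gap spot, fails to contain $T$ (because $Te_i=\alpha_i e_{i+1}\ne 0$ while every element of that ideal vanishes on $\mathcal M=\spans\{e_1,\dots,e_{i+1}\}$), and this witnesses nonsimplicity for exactly the semigroups at hand.

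The only thing that actually needs checking is the inclusion of classes: the family of Theorem \ref{theorem 3.1} is contained in the family of Proposition \ref{nonsimple semigroups}, since the gap hypothesis is common to both and Theorem \ref{theorem 3.1} merely adds the constraint $|\alpha_n|\in\{0,1\}$. Because the hypothesis of Proposition \ref{nonsimple semigroups} is strictly weaker, there is no genuine obstacle here and no further computation to perform; the corollary is immediate from the shared gap property.
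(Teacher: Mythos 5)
Your proposal is correct and matches the paper exactly: the paper presents this corollary as an immediate special case of Proposition \ref{nonsimple semigroups}, whose gap hypothesis is weaker than the conditions in Theorem \ref{theorem 3.1}, so no further argument is needed. Your recollection of the witness ideal $(T^{i+1})_{\Sc(T,T^*)}$ is also the one used there.
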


Theorem \ref{theorem 3.1} provides a characterization of SI semigroup $\Sc(T,T^*)$ for those weighted shifts whose weight sequence has the gap property that $\alpha_i \neq 0$, 
$\alpha_{i+1}= 0$ for some $i \ge 1$. So to fully characterize the SI semigroups $\Sc(T, T^*)$ generated by arbitrary weighted shifts $T$, the only case remaining to investigate is the case when there are no gaps, that is, all $\alpha_j \neq 0$ for $j > N$ for some $N \ge 0$, i.e., $\{\alpha_j\}=0_N\oplus \{\alpha_j\}_{j>N}$ and $0_N$ denotes the zero sequence of length $N$. 

Lemma \ref{lemma3.1} below will be used repeatedly in Theorem \ref{THEOREM} which provides a necessary condition in terms of the weight sequence $\{\alpha_n\}$ for $\Sc(T, T^*)$ to be an SI semigroup when generated by a weighted shift.

\begin{lemma}\label{lemma3.1}
	Let $T$ be the  weighted shift with a complex weight sequence $\{\alpha_n\}$. For any $m,n\geq 1~\mbox{and}~i\geq 1$, if ${T^*}^mT^ne_i\neq 0$, then 
	\begin{enumerate}[label=(\roman*)]
		\item	$n-m\geq 1-i$, and
		\item ${T^*}^mT^ne_i=(c_i\alpha_i\bar{\alpha}_{i+n-m}) e_{i+n-m}$ where $c_i$ is the product of some  $\alpha_j$'s and $\bar{\alpha}_j$'s with indices $j > \min\{i,i+n-m\}$.
	\end{enumerate}
\end{lemma}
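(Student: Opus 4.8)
The plan is to prove both parts by a single direct computation: apply $T^n$ and then ${T^*}^m$ to the basis vector $e_i$ using the explicit formulas \eqref{def1'} and \eqref{def2'}, and then bookkeep the resulting product of weights. First I would compute, via \eqref{def1'}, that $T^n e_i = \left(\prod_{j=i}^{i+n-1}\alpha_j\right)e_{i+n}$, a scalar multiple of the single basis vector $e_{i+n}$. Applying ${T^*}^m$ to $e_{i+n}$ via \eqref{def2'} (the case $k=i+n$), the formula shows the result is zero precisely when $i+n\le m$ and nonzero otherwise. Hence if ${T^*}^m T^n e_i\neq 0$ then necessarily $i+n\ge m+1$, i.e. $i+n-m\ge 1$, which is exactly part (i), $n-m\ge 1-i$. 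This settles (i) immediately, without any further work.

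For part (ii), in the nonzero case \eqref{def2'} gives ${T^*}^m e_{i+n}=\left(\prod_{j=i+n-m}^{i+n-1}\bar\alpha_j\right)e_{i+n-m}$, so combining the two steps yields
\[
{T^*}^m T^n e_i=\left(\prod_{j=i}^{i+n-1}\alpha_j\right)\left(\prod_{j=i+n-m}^{i+n-1}\bar\alpha_j\right)e_{i+n-m}.
\]
It then remains only to extract the factor $\alpha_i\,\bar\alpha_{i+n-m}$ and to verify that every remaining weight carries an index strictly exceeding $\min\{i,i+n-m\}$. Since $n\ge 1$, the index $i$ occurs in the first product, so $\alpha_i$ may be pulled out, leaving $\alpha_j$ with $j\ge i+1$; since $m\ge 1$, one has $i+n-m\le i+n-1$, so the index $i+n-m$ occurs in the second product, so $\bar\alpha_{i+n-m}$ may be pulled out, leaving $\bar\alpha_j$ with $j\ge i+n-m+1$. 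Collecting all the leftover factors as $c_i$ produces the claimed form $(c_i\,\alpha_i\,\bar\alpha_{i+n-m})\,e_{i+n-m}$.

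The only point requiring care — and the single mild obstacle — is that the index bound in the statement is phrased against $\min\{i,i+n-m\}$ rather than against $i$ alone, which matters in the two regimes $n\ge m$ (where $i+n-m\ge i$) and $n<m$ (where $i+n-m<i$). In both regimes, however, the leftover indices satisfy $j\ge i+1$ or $j\ge i+n-m+1$, each of which clears $\min\{i,i+n-m\}$, so the minimum is handled uniformly and no separate case split on the final inequality is actually needed. Everything else is the routine telescoping of the weight products and the verification that $\alpha_i,\bar\alpha_{i+n-m}$ genuinely appear in their respective products, which I would not belabor.
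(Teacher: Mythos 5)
Your argument is correct and is essentially the same as the paper's: both compute $T^n e_i$ and then ${T^*}^m e_{i+n}$ directly from Equations (\ref{def1'})--(\ref{def2'}), read off part (i) from the nonvanishing condition $i+n\geq m+1$, and obtain (ii) by extracting $\alpha_i$ and $\bar{\alpha}_{i+n-m}$ as the smallest-index factors of the two weight products. Your explicit check that the leftover indices clear $\min\{i,i+n-m\}$ in both regimes is a minor elaboration of what the paper leaves implicit, but the route is identical.
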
	
\begin{proof}
	Since $T$ is the  weighted shift with a complex weight sequence $\{\alpha_k\}$, for readers' convenience we display again Equations (\ref{def1'})-(\ref{def2'}) for $k \geq 1$ and for $m \geq 1$: 
	\begin{equation} \label{def1}
	T^me_k=(\Pi_{j=k}^{k+m-1}\alpha_j)e_{k+m}
	\end{equation} and 
	\begin{equation}\label{def2}
	{T^*}^me_k=
	\begin{cases} 
	0 & \quad \text{ for } 1\leq k \leq m\\
	(\Pi_{k-m}^{k-1}\bar{\alpha}_j)e_{k-m} & \quad \text{ for } k \geq m+1 \\ 
	\end{cases}
	\end{equation}
	For $n\geq 1$, by (\ref{def1}), $T^ne_i=(\alpha_i\alpha_{i+1}\ldots\alpha_{i+n-1})e_{i+n}$, and so for $m \ge 1$, ${T^*}^mT^ne_i=(\alpha_i\alpha_{i+1}\cdots\alpha_{i+n-1}){T^*}^me_{i+n}$. 
	Since ${T^*}^mT^ne_i\neq 0,~{T^*}^me_{i+n}\neq 0$. 
	Therefore from (\ref{def2}), $i+n\geq m+1$, that is, $n-m\geq 1-i$ (which proves (i)) and $${T^*}^me_{i+n}=(\bar{\alpha}_{i+n-m}\bar{\alpha}_{i+n-m+1}\cdots\bar{\alpha}_{i+n-1})e_{i+n-m}.$$ Hence, 
	$${T^*}^mT^ne_i=(\alpha_i\alpha_{i+1}\cdots\alpha_{i+n-1})(\bar{\alpha}_{i+n-m}\bar{\alpha}_{i+n-m+1}\cdots\bar{\alpha}_{i+n-1})e_{i+n-m}.$$
	Note that the index $i$ is the smallest index of $\alpha_j$ in the first parenthesis of scalars and $i+n-m$ is the smallest index of $\bar{\alpha}_j$ in the second parenthesis of scalars in the above display. So, combining all the scalars $\alpha_j$'s and $\bar{\alpha}_j$'s together except $\alpha_i$ and $\bar{\alpha}_{i+n-m}$, we re-write 
	$${T^*}^mT^ne_i=(c_i\alpha_i\bar{\alpha}_{i+n-m})e_{i+n-m}$$ 
	where $c_i$ is the product of some $\alpha_j$'s and $\bar{\alpha}_j$'s with indices $j > \min\{i,i+n-m\}$. This completes the proof of the lemma.
\end{proof}
The next theorem is a necessary condition for $\Sc(T,T^*)$ to be SI: Each weight passed the first has its reciprocal consisting of products of later weights and their conjugates (not necessarily strictly later).
\begin{theorem}\label{THEOREM}
	Let $T$ be the weighted shift with complex weights $\alpha_j \neq 0$ for all $j \geq 1$. If $\Sc(T,T^*)$ is an SI semigroup, then for each $i \geq 2$, the reciprocal $1/\alpha_i$ is a product of some $\alpha_j$'s and $\bar{\alpha}_j$'s with indices $j\geq i$.
\end{theorem}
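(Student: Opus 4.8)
The plan is to turn the SI hypothesis, in its principal-ideal form, into a family of scalar identities among the weights read off a single diagonal, and then to isolate $1/\alpha_i$. Since $\Sc(T,T^*)$ is SI, the principal ideal $(T)_{\Sc(T,T^*)}$ is selfadjoint, so $T^*=XTY$ for some $X,Y\in\Sc(T,T^*)\cup\{I\}$, not both equal to $I$ (otherwise $T=T^*$, impossible for a shift). Both $T^*$ and $XTY$ are supported on the first superdiagonal, so comparing the $(i-1,i)$ matrix entries yields, for every $i\ge 2$,
\[
\bar{\alpha}_{i-1}=\big(\text{the scalar that }XTY\text{ accumulates on }e_i\big),
\]
where, by Equations (\ref{def1'})--(\ref{def2'}), this scalar is a product of factors $\alpha_j$ and $\bar{\alpha}_j$ governed by the running index: a letter $T$ applied at running index $k$ contributes $\alpha_k$ and a letter $T^*$ contributes $\bar{\alpha}_{k-1}$. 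Because there is no zero gap, $T^*e_i=\bar{\alpha}_{i-1}e_{i-1}\ne 0$ for all $i\ge 2$, so each such identity is nontrivial and the word never annihilates $e_i$.

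The first thing I would extract is a rigidity of the running index. Writing $i+s_t$ for the running index after the rightmost $t$ letters of $XTY$ have acted on $e_i$ (so $s_0=0$ and $s_L=-1$, the net degree), nonvanishing at $i=2$ forces $2+s_t\ge 1$, i.e. $s_t\ge -1$ for all $t$; together with $s_L=-1$ this gives $\min_t s_t=-1$. Hence for every $i$ the running index stays $\ge i-1$, so all weights occurring have index $\ge i-1$, and index exactly $i-1$ can arise only from steps between levels $i-1$ and $i$. I would then package the ascending-descending excursions of $XTY$ by Lemma \ref{lemma3.1}: a block ${T^*}^mT^n$ acting on $e_p$ produces $c_p\alpha_p\bar{\alpha}_{p+n-m}e_{p+n-m}$ with $c_p$ a product of weights of index $>\min\{p,p+n-m\}$. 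Reading $XTY$ as a product of such blocks from the right, the rightmost block (entry index $i$) extracts the factor $\alpha_i$; every block whose entry and exit indices are $\ge i$ contributes only weights of index $\ge i$; and the terminal descent to $i-1$ contributes a single factor $\bar{\alpha}_{i-1}$. Cancelling this against the left-hand side and solving for $\alpha_i$ gives
\[
\frac{1}{\alpha_i}=\prod\big(\alpha_j\ \text{and}\ \bar{\alpha}_j\ \text{with}\ j\ge i\big),
\]
which is the assertion.

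The hard part will be the control of the index-$(i-1)$ weights. A priori the running index may return to level $i-1$ before its terminal descent, and each such excursion contributes a matched pair $\alpha_{i-1}\bar{\alpha}_{i-1}=|\alpha_{i-1}|^2$ that survives the cancellation and destroys the index bound; similarly, if $XTY$ ended in $T^*$ rather than $T$, no $\alpha_i$ would be extracted. The crux is thus to show that, for the solving word, the descent to $i-1$ is terminal and $XTY$ ends in $T$ (equivalently, that its partial net degrees read from the right stay nonnegative until the final step). I would establish this by a kernel/range analysis of $X$ and $Y$ in the spirit of the proof of Theorem \ref{theorem 3.1}, using $\ker(XTY)=\ker T^*=\spans\{e_1\}$ together with the inclusions among the ranges $\range {T^*}^{k}$, to reduce $XTY$ to the required block normal form before Lemma \ref{lemma3.1} is applied.
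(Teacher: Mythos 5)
Your setup---reducing to $T^* = XTY$ via selfadjointness of the principal ideal $(T)_{\Sc(T,T^*)}$, comparing the single nonzero diagonal entrywise, decomposing the word into blocks ${T^*}^{m_j}T^{n_j}$ via Lemma~\ref{lemma3.1}, and using nonvanishing at $i=2$ to force the running index to stay $\ge i-1$---is exactly the paper's route up through its Equation~(\ref{eee2}). The gap is in how you propose to finish. You correctly identify the obstruction (returns of the running index to level $i-1$ before the final step, and words whose rightmost letter is $T^*$), but your plan is to \emph{rule these out}, i.e.\ to show the solving word has terminal descent and ends in $T$. That claim is false, and no kernel/range argument in the spirit of Theorem~\ref{theorem 3.1} can deliver it: for the unilateral shift $S$ (all weights $1$, an SI generator), $S^* = S^*SS^*$ is a valid solution whose running index on $e_i$ goes $i \to i-1 \to i \to i-1$, dipping to level $i-1$ before the terminal step, and whose rightmost letter is $S^*$; yet $\ker(S^*SS^*)=\spans\{e_1\}$ and all the range inclusions you would invoke are satisfied. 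The SI hypothesis hands you \emph{some} solution word, and you do not get to choose one of your preferred normal form; the only constraint extractable from $T^*e_2\ne 0$ is that the partial net degrees read from the right are $\ge -1$, not $\ge 0$ until the last step.

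The paper's actual resolution keeps both possibilities. It splits according to whether the word ends in $T$ (its Case 1) or in a power of $T^*$ (its Case 2, where that trailing power is first shown to be exactly $1$ because ${T^*}^{m_{k+1}}e_2\ne 0$). In each case it lets $s$ denote the number of $\bar\alpha_{i-1}$ factors in the accumulated scalar; crucially $s$ depends only on the exponents $m_j,n_j$ of the word and hence is the \emph{same for every} $i$, yielding (in Case 1) $\bar\alpha_{i-1}=\bar\alpha_{i-1}^{\,s}\alpha_i\gamma_i$ simultaneously for all $i\ge 2$, with $\gamma_i$ a product of weights of index $\ge i$. If $s=1$ one cancels and is done; if $s\ge 2$ one instead uses the same identity at index $i+1$, namely $\bar\alpha_i=\bar\alpha_i^{\,s}\alpha_{i+1}\gamma_{i+1}$, cancels one $\bar\alpha_i$, and conjugates to get $1/\alpha_i=\alpha_i^{\,s-2}\,\bar\alpha_{i+1}\,\bar\gamma_{i+1}$, a product of weights of index $\ge i$ (Case 2 is handled analogously and produces the identity for $1/\alpha_{i-1}$, which one then reindexes). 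This shift-by-one-and-conjugate step, exploiting that a single word solves the equation at every basis vector, is the idea missing from your proposal; without it, the excursions to level $i-1$ and the trailing-$T^*$ case are fatal to your outline.
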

\begin{proof}
		Suppose $\Sc(T,T^*)$ is an SI semigroup. Then the principal ideal $(T)_{\Sc(T,T^*)}$ is selfadjoint. So, $T^*=XTY$ for some $X,Y\in \Sc(T,T^*)\cup\{I\}$ where either $X$ or $Y \neq I$, otherwise $T$ would be selfadjoint contradicting the nonselfadjointness of $T$. Moreover, $\range T^*\nsubseteq \range T$ (as $e_1 \in \range T^*$ but $e_1 \not \in \range T$). And hence $T^*=XTY$ implies that $X \ne I$ and must start with $T^*$, since otherwise it starts with $T$ implying $\range T^*\subseteq \range T$, a contradiction.
		
		Recall the obvious semigroup description for $\Sc(T,T^*)$ \cite[Proposition 1.6]{PW21}:
	
		\noindent $\Sc(T,T^*) =  \{T^n, {T^*}^n,  \Pi_{j=1}^{k}{T^*}^{m_j}T^{n_j},
	(\Pi_{j=1}^{k}{T^*}^{m_j}T^{n_j}){T^*}^{m_{k+1}},\Pi_{j=1}^{k}T^{n_j}{T^*}^{m_j}, (\Pi_{j=1}^{k}T^{n_j}{T^*}^{m_j})T^{n_{k+1}}\},$ where $n \ge 1,\,  k\ge1,\, n_j, m_j \ge 1\, \text{ for }\, 1 \le j \le k \text{ and } n_{k+1},m_{k+1}\geq 1$.
	
		Since $XTY$ is a word in $T$ and $T^*$ that starts with $T^*$ and has a $T$ in it, observing this semigroup list, $XTY$ can only have the third or 
	fourth form of the list. 
	Considering these two cases we obtain the necessary reciprocal condition.
	
	\textit{\textbf{Case 1:}} Suppose  
	$T^* = XTY=\Pi_{j=1}^{k}({T^*}^{m_j}T^{n_j})$ 
	for some $k\geq 1$ and $m_j, n_j \geq 1$ for $1\leq j \leq k$. 
	Therefore, for all $i > 1$, one has $0 \ne T^*e_i=\Pi_{j=1}^{k}({T^*}^{m_j}T^{n_j})e_i$. In particular, for $i\geq 2$, by Equation (\ref{W0}), 
	\begin{equation}\label{eee1}
	0 \ne \bar{\alpha}_{i-1}e_{i-1}=T^*e_i=\Pi_{j=1}^{k}({T^*}^{m_j}T^{n_j})e_i.
	\end{equation}
	
	Since $\bar{\alpha}_{i-1}e_{i-1} \neq 0$ for all $i \geq 2$, $\Pi_{j=1}^{k}({T^*}^{m_j}T^{n_j})e_i\neq 0$ for all $i \geq 2$.
	Hence, ${T^*}^{m_k}T^{n_k}e_i\neq 0$ for all $i\geq 2$. So, by Lemma~\ref{lemma3.1}(i), $n_k-m_k\geq 1-i$  for all $i\geq 2$. In particular, for $i=2$, one obtains $n_k-m_k\geq -1$ and for $i \geq 2$, by Lemma \ref{lemma3.1}(ii),
	$$0 \ne {T^*}^{m_k}T^{n_k}e_i= (c_{i,k}\alpha_i\bar{\alpha}_{i+n_k-m_k})e_{i+n_k-m_k},$$ where $c_{i,k}$ 
	depends on $e_i$ and $k$ as in the $k$-product form for $T^*$, and $c_{i,k}$ is a product of some $\alpha_j$'s and $\bar{\alpha}_j$'s with indices 
	$j > \min\{i,i+n_k-m_k\}$. Since $n_k-m_k\geq -1$, one has these indices $j > i-1$. 
	
	Thus starts a backwards induction. That is, if $k > 1$, we next consider $({T^*}^{m_{k-1}}T^{n_{k-1}})({T^*}^{m_k}T^{n_k})e_i$. 
	Since $\Pi_{j=1}^{k}({T^*}^{m_j}T^{n_j})e_i\neq 0$ for all $i \geq 2$, one has 
	$$0 \neq ({T^*}^{m_{k-1}}T^{n_{k-1}})({T^*}^{m_k}T^{n_k})e_i
	=(c_{i,k}\alpha_i\bar{\alpha}_{i+n_k-m_k}){T^*}^{m_{k-1}}T^{n_{k-1}}e_{i+n_k-m_k}.$$
	Hence ${T^*}^{m_{k-1}}T^{n_{k-1}}e_{i+n_k-m_k} \ne 0.$
	Again by Lemma~\ref{lemma3.1}(i), $n_{k-1}-m_{k-1}\geq 1-(i+n_k-m_k)$, i.e, $(n_k-m_k)+(n_{k-1}-m_{k-1})\geq 1-i$ for all $i\geq 2$. In particular, for $i=2$, one obtains $(n_k-m_k)+(n_{k-1}-m_{k-1})\geq -1$. And again by Lemma~\ref{lemma3.1}(ii), for $i \geq 2$, 
	$${T^*}^{m_{k-1}}T^{n_{k-1}}e_{i+n_k-m_k}=(c_{i,k-1}\alpha_{i+n_k-m_k}\bar{\alpha}_{i+\sum_{j=k-1}^{k}(n_j-m_j)})e_{i+\sum_{j=k-1}^{k}(n_j-m_j)},$$ 
	where $c_{i,k-1}$ is a product of some $\alpha_j$'s and $\bar{\alpha}_j$'s  with indices $j > \min\{i+(n_k-m_k),i+(n_k-m_k)+(n_{k-1}-m_{k-1})\}$. Since $n_k-m_k \geq -1$ and $(n_k-m_k)+(n_{k-1}-m_{k-1})\geq -1$, so we have the indices $j > i-1$. Therefore,
	\begin{align*} ({T^*}^{m_{k-1}}T^{n_{k-1}})&({T^*}^{m_k}T^{n_k})e_i=(c_{i,k}\alpha_i\bar{\alpha}_{i+(n_k-m_k)})
	(c_{i,k-1}\alpha_{i+(n_k-m_k)}\bar{\alpha}_{i+\sum_{j=k-1}^{k}(n_j-m_j)})e_{i+\sum_{j=k-1}^{k}(n_j-m_j)}\\
	&=(c'_{i,k-1}\alpha_i|\alpha_{i+n_k-m_k}|^2\bar{\alpha}_{i+\sum_{j=k-1}^{k}(n_j-m_j)})e_{i+\sum_{j=k-1}^{k}(n_j-m_j)},
	\end{align*}
	where $c'_{i,k-1} :=c_{i,k-1}c_{i,k}$ is a product of some $\alpha_j$'s and $\bar{\alpha}_j$'s with indices $j > i-1$. Continuing backwards in this way and applying Lemma \ref{lemma3.1}(i)-(ii) repeatedly, one finally obtains, for $i\geq 2$,
	
	\begin{equation}\label{eee2}
	\Pi_{j=1}^{k}({T^*}^{m_j}T^{n_j})e_i=(\delta_i\alpha_i|\alpha_{i+n_k-m_k}|^2\cdots|\alpha_{i+\sum_{j=2}^{k}(n_j-m_j)}|^2\bar{\alpha}_{i+\sum_{j=1}^{k}(n_j-m_j)})e_{i+\sum_{j=1}^{k}(n_j-m_j)}
	\end{equation}
	where $\delta_i$ is a product of some $\alpha_j$'s and $\bar{\alpha}_j$'s  with indices $j > i-1$. Moreover, for each $1 \leq r \leq k$, one has
	$$\sum_{j=r}^{k}(n_j-m_j)\geq-1.$$
	
	Recall from Equation (\ref{eee1}): $$\bar{\alpha}_{i-1}e_{i-1}=\Pi_{j=1}^{k}({T^*}^{m_j}T^{n_j})e_i$$ for all $i\geq 2$. By replacing the right-hand side with the expression obtained in Equation (\ref{eee2}), for all $i\geq 2$ one has $$\bar{\alpha}_{i-1}e_{i-1}=(\delta_i\alpha_i|\alpha_{i+n_k-m_k}|^2\cdots|\alpha_{i+\sum_{j=2}^{k}(n_j-m_j)}|^2\bar{\alpha}_{i+\sum_{j=1}^{k}(n_j-m_j)})e_{i+\sum_{j=1}^{k}(n_j-m_j)}.$$ 
	Equating subscripts and scalars we obtain $i-1 = i + \sum_{j=1}^{k}(n_j-m_j)$ and hence $\sum_{j=1}^{k}(n_j-m_j)=-1$, so at least one $\bar{\alpha}_{i-1}$ appears in this product;  then letting $s$ denote the number of $\bar{\alpha}_{i-1}$ appearing in this product (so $s \ge 1$ and depends only on the $m_j. n_j$'s, hence is independent of $i$); 
	so for all $i \geq 2$ one obtains, 
	$$\bar{\alpha}_{i-1}=\bar{\alpha}_{i-1}^{s}\alpha_i\gamma_i  \quad \text{and} \quad 
	\bar{\alpha}_{i}=\bar{\alpha}_{i}^{s}\alpha_{i+1}\gamma_{i+1},$$  
	where $s \geq 1$ and $\gamma_i$ is the product of $\alpha_j$'s and $\bar{\alpha}_j$'s with $j > i -1$. Since $\bar{\alpha}_{i-1} \neq 0$, by considering the cases $s=1$ and $s>1$ separately, a necessary condition is that $1/\alpha_i$ is a product of $\alpha_j$'s and $\bar{\alpha}_j$'s  with indices $j > i-1$.
	Indeed, if $s = 1$ then the first identity yields that $1/\alpha_i$ as a product of some $\alpha_j,\bar{\alpha}_j, j \ge i$; and if $s \ge 2$, then taking conjugates on both sides in the second identity yields this fact. 
	
	%Therefore, $\alpha_i\gamma_i=1$ for all $i\geq 2$ or $\alpha^{s-1}_{i-1}\alpha_i\gamma_i=1$ for all $i\geq 2$ (depending on value of $s=1$ or $s\geq 2$ respectively).
	
	\textit{\textbf{Case 2:}} Suppose $T^* = XTY=\Pi_{j=1}^{k}({T^*}^{m_j}T^{n_j}){T^*}^{m_{k+1}}$ for some $k\geq 1$, $m_j, n_j \geq 1$ for $1\leq j \leq k$, and  $m_{k+1} \geq 1$. We first claim that $m_{k+1} =1$.
	Since $T^*e_2\neq 0$, one has $(\Pi_{j=1}^{k}{T^*}^{m_j}T^{n_j}){T^*}^{m_{k+1}}e_2 \neq 0$ and so ${T^*}^{m_{k+1}}e_2 \neq 0$. Then it follows from Equation (\ref{def2}) that $m_{k+1}=1$. Therefore, $T^*=\Pi_{j=1}^{k}({T^*}^{m_j}T^{n_j})T^*$. 
	In particular by Equation (\ref{W0}), for $i\geq 2$,
	\begin{equation}\label{aaa1} 
	\bar{\alpha}_{i-1}e_{i-1} = T^*e_i= 	\bar{\alpha}_{i-1}\Pi_{j=1}^{k}({T^*}^{m_j}T^{n_j})e_{i-1}.
	\end{equation}
	For $i \geq 2$, since $\bar{\alpha}_{i-1} \neq 0$, ${T^*}^{m_k}T^{n_k}e_{i-1} \neq 0$ and so, by Lemma \ref{lemma3.1}(i) applied to $i-1$, one obtains $n_k-m_k \geq 1-(i-1)$. Hence for $i=2$ one has $n_k-m_k\geq 0$, and for $i\geq 2$ one has by Lemma \ref{lemma3.1}(ii)
	$$ {T^*}^{m_k}T^{n_k}e_{i-1}= (c_{i-1,k}\alpha_{i-1}\bar{\alpha}_{i-1+n_k-m_k})e_{i-1+n_k-m_k},$$ where $c_{i-1,k}$ is a product of some $\alpha_j$'s and $\bar{\alpha}_j$'s with indices $j > \min\{i-1,i-1+n_k-m_k\} = i - 1$, the latter equality since $n_k-m_k\geq 0$, and so indices $j > i-1$. Following the same backwards induction argument as in Case $1$ for $\Pi_{j=1}^{k}({T^*}^{m_j}T^{n_j})e_{i-1} \neq 0$, Equation (\ref{eee2}) in this case becomes 
	\begin{equation}\label{eee3}
	\Pi_{j=1}^{k}({T^*}^{m_j}T^{n_j})e_{i-1}=(\delta_{i-1}\alpha_{i-1}|\alpha_{i-1+n_k-m_k}|^2\cdots|\alpha_{i-1+\sum_{j=2}^{k}(n_j-m_j)}|^2\bar{\alpha}_{i-1+\sum_{j=1}^{k}(n_j-m_j)})e_{i-1+\sum_{j=1}^{k}(n_j-m_j)}
	\end{equation}	
	where $\delta_{i-1}$ is a product of some $\alpha_j$'s and $\bar{\alpha}_j$'s with indices $j > i-1$. Moreover, for each $1 \leq r \leq k$, one has
	$$\sum_{j=r}^{k}(n_j-m_j)\geq 0.$$
	Substituting in Equation (\ref{aaa1}) the expression obtained in Equation (\ref{eee3}), we obtain
	\begin{equation}\label{aaa2} 
	\bar{\alpha}_{i-1}e_{i-1} = \bar{\alpha}_{i-1}(\delta_{i-1}\alpha_{i-1}|\alpha_{i-1+n_k-m_k}|^2\cdots|\alpha_{i-1+\sum_{j=2}^{k}(n_j-m_j)}|^2\bar{\alpha}_{i-1+\sum_{j=1}^{k}(n_j-m_j)})e_{i-1+\sum_{j=1}^{k}(n_j-m_j)}.
	\end{equation}
	Equating subscripts and scalars, 
	we obtain $i-1 = i -1+ \sum_{j=1}^{k}(n_j-m_j)$, that is, $\sum_{j=1}^{k}(n_j-m_j)=0$, 
	so along with at least one $\alpha_{i-1}$ we have at least two $\bar{\alpha}_{i-1}$ appears in this product;  then letting $s$ denote the number of 
	$\bar{\alpha}_{i-1}$ appearing in this product (so $s \ge 2$ and depends only on the $m_j. n_j$'s, hence is independent of $i$); 
	so for all $i \geq 2$ one obtains, 
	$$\bar{\alpha}_{i-1}=\alpha_{i-1}\bar{\alpha}_{i-1}^{s}\gamma_{i},$$
	where $\gamma_{i}$ is the product of $\alpha_j$'s and $\bar{\alpha}_j$'s with $j > i-1$ and $s \geq 2$. 
	Then since $\bar{\alpha}_{i-1} \neq 0$, $1/\alpha_{i-1}=\bar{\alpha}_{i-1} ^{s-1}\gamma_{i}$ for all $i \ge 2$ or equivalently, 
	\begin{equation*}
	1/\alpha_i={\bar{\alpha}_i}^{s-1}\gamma_{i+1} \quad \text{for all~} i \ge 1,
	\end{equation*}
	where $\gamma_{i+1}$ is the product of $\alpha_j$'s and $\bar{\alpha}_j$'s with $j > i$. 
	
	Therefore in each of the only two cases possible, Case 1 and Case 2, we obtained the necessary condition stated in the theorem. This completes the proof.
\end{proof}

\begin{remark}\label{alpha1}
	We emphasize here that one cannot infer the reciprocal of the first weight $\alpha_1$ is a product of its later $\alpha_j$'s,  if $\Sc(T, T^*)$ is SI. For example, consider $T$ with the weight sequence $\left(2, 1, 1, \cdots \right)$. Then by direct computation, $T$ satisfies the equation $(T^*T)T = T$ and hence, by Proposition \ref{quasi-isometries} (see below), $\Sc(T, T^*)$ is simple, a special case of SI. But $\alpha_1 = 2$ clearly cannot  not have its inverse as a product of $\alpha_j$'s with index $j \geq 1$.
	
	Moreover, easy examples of non-SI semigroups $\Sc(T,T^*)$, i.e., where the necessary reciprocal condition in Theorem \ref{THEOREM} fails, abound, as for instance all multiples of the unilateral shift $cS$, with $|c| \ne 1$. So also for all weighted shifts with absolute values of all the weights less than one (or bigger than one).
	
	In \cite{PW21}, we obtained the necessary norm condition $||T||\ge 1$ (as a consequence of [\cite{PW21}, Remark 1.22 (iii), see also Example 1.23]) for $\Sc(T,T^*)$ to be an SI semigroup for the more general class of nonselfadjoint operators $T$. And here in Theorem \ref{THEOREM}, we obtained the necessary reciprocal condition for the class of weighted shift operators $T$ with all nonzero weights, which is a subclass of nonselfadjoint operators. We next show that for this class of weighted shift operators, the reciprocal condition is stronger than the norm condition. For that we need to show the reciprocal condition fails whenever the norm condition fails. Suppose $T$ is a weighted shift with $||T||< 1$, then $0<|\alpha_n|<1$ for all $n\ge 1$. Then clearly $1/{\alpha_2}$ (with $1/{|\alpha_2|}>1$) cannot be a product of some powers of $\alpha_j$'s and $\bar{\alpha}_j$'s with $j\ge 2$ because such a product would have absolute value smaller than $1$. Therefore the reciprocal condition fails.
	
	Also one can construct easy examples of weighted shifts where the necessary reciprocal condition fails but the necessary norm condition is satisfied. For instance, consider weighted shifts $T$ with weight sequences $\{1-1/n\}$ and $\{1+1/n\}$. For both these weighted shifts, $T$ satisfies  the necessary norm condition $||T||\ge 1$, however the reciprocal condition clearly fails so  $\Sc(T,T^*)$ are not SI by  Theorem \ref{THEOREM}. This also shows that, for the class of weighted shifts, the condition $||T||\ge 1$ is necessary but not sufficient for $\Sc(T,T^*)$ to be an SI semigroup. 
\end{remark}

The necessary reciprocal condition obtained in Theorem \ref{THEOREM} is also \textit{not} sufficient for $\Sc(T, T^*)$ to be SI as shown in the following example.

	\begin{example}\label{p-example}
		The weighted shift $T$ with weight sequence $\{2, 1/\sqrt 2, 2,1/\sqrt 2, \ldots\}$ satisfies the necessary reciprocal condition, but the semigroup generated by $T$ is not SI. Indeed, this weight sequence is periodic with period $2$ but $T^2$ is not an isometry and hence by Theorem~\ref{periodic SI characterization},  $\Sc(T, T^*)$ is not SI.
	\end{example}
	
Although we could not find a sufficient condition for $\Sc(T, T^*)$ to be SI, nevertheless we were able to obtain a necessary and sufficient condition for $\Sc(T, T^*)$ to be SI when generated by two particular subclasses of weighted shift operators from among those that have no zero gap, that is, $\{\alpha_j\}=0_N\oplus \{\alpha_j\}_{j>N}$ where $\alpha_j\ne 0$ for $j>N\ge 0$ (recalling that for the zero gap case $\Sc(T, T^*)$ is already SI characterized in Theorem~\ref{theorem 3.1}). Those classes are: those weighted shifts whose nonzero weights $\{\alpha_j\}_{j>N}$ have periodic absolute value sequence ($\{|\alpha_j|\}_{j>N}$); and those weighted shifts whose nonzero weights $\{\alpha_j\}_{j>N}$ have eventually constant absolute value sequence ($\{|\alpha_j|\}_{j>N}$) (Theorem~\ref{periodic SI characterization}, Corollary \ref{eventually-constant}). Observe that the first class properly contains all weighted shifts with periodic weight sequence and the second class properly contains all weighted shifts with eventually constant weight sequence. For the larger class of weighted shifts having weight sequences with absolute values almost periodic (see Definition \ref{definition almost periodic}), we obtained a necessary condition which is not sufficient for the $\Sc(T, T^*)$ to be SI (Theorem \ref{almost periodic theorem}).\\	
	
	\noindent \textbf{A characterization of SI semigroups $\Sc(T,T^*)$ generated by weighted shifts with periodic attributes.}
	
	We have seen in Theorem \ref{theorem 3.1} a characterization of SI semigroups $\Sc(T,T^*)$ for those weighted shifts whose weight sequence has the gap property that $\alpha_i \neq 0, \alpha_{i+1}=0$ for some $i\ge 1$.
	%And as mentioned prior to Lemma \ref{lemma3.1}, a full characterization is reduced to the case when all weights are nonzero.
	
	We could not achieve both necessary and sufficient conditions for the class of weight sequences without a zero gap (i.e, of the form $0_k \oplus \{\alpha_i\}$ where $\alpha_i \neq 0$ for all $i$ and $k\geq 0$), but among this class we will next determine necessary and sufficient conditions to ensure the SI property for semigroups $\Sc(T,T^*)$  generated by weighted shifts where the sequence $\{|\alpha_i|\}$ is periodic. And we will see (in the begining of the proof of Theorem~\ref{first p-theorem}) that this characterization reduces to the case where $k=0$ and the weight sequence $\{\alpha_i\}$ has strictly positive periodic weights. To obtain this SI characterization we need some facts about diagonal matrices (that is, strictly upper, strictly lower and main diagonals), which are discussed in the next proposition and two corollaries.
	%but we can next give an SI characterization for $\Sc(T,T^*)$ generated by a weighted shift with a periodic weight sequence.
	\color{black}

	\vspace{.2cm}
	
	\noindent  \textit{Preliminaries on diagonal matrices - upper and lower diagonals}

	Call the set $\mathfrak D$ of all matrices with at most one nonzero diagonal. That is,  lower diagonals (include the possibility of main diagonals), upper diagonals (include the possibility of main diagonals), strictly lower and strictly upper diagonal matrices. More precisely, for $\{e_n\}$ an orthonormal basis of $\mathcal H$, by a $k$-diagonal matrix with complex weights $\alpha=\{\alpha_j\}$, denoted by $D^{(\alpha)}_{k}$, we mean 
	
	\begin{equation} \label{diagonal definition}
	D^{(\alpha)}_{k}e_i =
	\begin{cases}
	&\alpha_i e_{i+k}, ~\text{for}~k\ge 0 \quad \\
	&\text{and for}~k<0, \\
	&0, ~\text{for}~1 \le i \le -k \\
	&\alpha_{i+k} e_{i + k},~\text{for}~ i > -k
	\end{cases}
	\end{equation}
	That is, $k >0, k = 0, k < 0$ corresponds respectively to strictly lower, main and strictly upper diagonal matrices. 
	\begin{definition}\label{definition of eventually periodic}
		For a sequence $\alpha=\{\alpha_j\}$, we say $\alpha$ is eventually periodic with period $p$, if there exists an $N \geq 0$ such that $\alpha_{j+N} = \alpha_{j+N+p}$ for $j \geq 1$. In particular, a periodic sequence with period $p$ is the special sequence with $N = 0$.
	\end{definition}

	In the next proposition and proof we denote maximum and minimum of integers by $\vee, \wedge$ respectively. And in Equation (\ref{LU}) the direct sum indicates we start the sequence with that number of zeros.
	
	\begin{proposition} \label{multiplication}
		Let $\mathfrak D = \{D^{(\alpha)}_{k}: k \in \mathbb Z, \alpha \in \ell^{\infty}\}$. Then $D^{(\alpha)}_{k}D^{(\beta)}_{l} = D^{(\gamma)}_{k+l}$ for $\gamma \in \ell^{\infty}$ given by: 
		\begin{equation} \label{LL,UU,UL}
		\gamma =  \{\beta_i \alpha_{i+l}\}, \, \{\beta_{i+l} \alpha_{i+l+k}\},\, \{\beta_i \alpha_{i+l+k}\}~ \text{respectively for $~k,l \ge 0,~ k,l < 0$, and $k<0 , ~l \ge 0$}\end{equation}
		and  
		\begin{equation} \label{LU}
		\gamma = 0_{k \wedge -l} \oplus \{\beta_{i+l} \alpha_{i+l}\}_{i > -l}\quad \text{for $~k \ge 0, ~l < 0$}
		\end{equation}
		Consequently, $\mathfrak D$ forms a multiplicative semigroup.
	\end{proposition}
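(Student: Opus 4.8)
The plan is to verify the product formula by computing $D^{(\alpha)}_k D^{(\beta)}_l$ one basis vector at a time. For each $i\ge 1$ I would first apply $D^{(\beta)}_l$ to $e_i$, obtaining either $0$ or a scalar multiple of the single basis vector $e_{i+l}$, and then apply $D^{(\alpha)}_k$, which again sends $e_{i+l}$ to $0$ or to a scalar multiple of $e_{i+l+k}=e_{i+(k+l)}$. Thus the composite sends every $e_i$ to a scalar multiple of $e_{i+(k+l)}$, which already shows $D^{(\alpha)}_k D^{(\beta)}_l$ has nonzero entries only on the $(k+l)$-diagonal; in particular it has the form $D^{(\gamma)}_{k+l}$, and the whole problem reduces to identifying the scalar weight attached to each surviving $e_i$ and then recording these scalars as the sequence $\gamma$ in the listing convention of (\ref{diagonal definition}).

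I would then split into the four sign patterns of $(k,l)$. The only facts needed at each step are that $D^{(\beta)}_l e_i=\beta_i e_{i+l}$ when $l\ge 0$, while for $l<0$ it annihilates $e_i$ for $1\le i\le -l$ and equals $\beta_{i+l}e_{i+l}$ otherwise; and symmetrically for $D^{(\alpha)}_k$ applied to $e_{i+l}$. Composing and tracking the index at which the $\alpha$-factor is evaluated produces, for $k,l\ge 0$, for $k,l<0$, and for $k<0,\ l\ge 0$, the coefficient attached to each surviving $e_i$ displayed in (\ref{LL,UU,UL}); in these three cases the set of $i$ for which the product is nonzero coincides with the intrinsic support of a single $(k+l)$-diagonal (no $e_i$ is annihilated beyond what $D^{(\gamma)}_{k+l}$ already annihilates), so no spurious leading zeros appear and recording $\gamma$ is routine.

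The delicate case, and the one I expect to be the main obstacle, is $k\ge 0,\ l<0$ in (\ref{LU}), where the lower shift $k$ and the upper shift $l$ partially cancel. Here $D^{(\beta)}_l$ kills $e_1,\dots,e_{-l}$, so the surviving output is $\beta_{i+l}\alpha_{i+l}\,e_{i+(k+l)}$ only for $i>-l$, strictly fewer columns than a bare $(k+l)$-diagonal would use in either sub-case. The crux is the bookkeeping that reconciles the column index $i>-l$ of the surviving vectors with the listing convention of (\ref{diagonal definition}) (column-indexed when $k+l\ge 0$, row-indexed when $k+l<0$). I would carry this out in the two sub-cases separately: when $k+l\ge 0$ the prefix of zeros in $\gamma$ has length $-l$, and when $k+l<0$ it has length $k$; since $k+l\ge 0$ forces $k\ge -l$ while $k+l<0$ forces $k<-l$, both lengths equal $k\wedge -l$, which is exactly the prefix $0_{k\wedge -l}$ in (\ref{LU}). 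This index reconciliation is the one place where care is genuinely required.

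Finally, boundedness is immediate: every entry of $\gamma$ is a product of entries of $\alpha$ and $\beta$, so $|\gamma_i|\le \|\alpha\|_\infty\|\beta\|_\infty$ and hence $\gamma\in\ell^\infty$ with $D^{(\gamma)}_{k+l}\in\mathfrak D$. This shows $\mathfrak D$ is closed under multiplication, and since associativity is inherited from composition of operators in $B(\mathcal H)$, the set $\mathfrak D$ is a multiplicative semigroup, which is the asserted consequence.
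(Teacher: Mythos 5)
Your proposal is correct and follows essentially the same route as the paper: a direct computation of $D^{(\alpha)}_{k}D^{(\beta)}_{l}e_i$ from Equation (\ref{diagonal definition}), split into the four sign cases of $(k,l)$, with boundedness of $\gamma$ immediate. Your explicit reconciliation of the zero-prefix length in the case $k\ge 0,\ l<0$ (length $-l$ when $k+l\ge 0$, length $k$ when $k+l<0$, both equal to $k\wedge -l$) is in fact spelled out more carefully than in the paper, which leaves that bookkeeping implicit.
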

	\begin{proof}
		Using Equation (\ref{diagonal definition}) to compute $D^{(\alpha)}_{k}D^{(\beta)}_{l}$ one obtains:
		\begin{equation} \label{k,l ge 0}
		\text{For}~ k,l \ge 0, i \ge 1, \quad D^{(\alpha)}_{k}D^{(\beta)}_{l} e_i = D^{(\alpha)}_{k}(\beta_i e_{i+l})  = \beta_i \alpha_{i+l} e_{i + k + l}
		\end{equation}
		\begin{equation}  \label{k,l < 0}
		\text{For}~ k,l < 0, i > -k - l, \quad D^{(\alpha)}_{k}D^{(\beta)}_{l} e_i =  \beta_{i+l} \alpha_{i+l+k} e_{i + k + l}, \quad \text{and $0$ for $1 \le i \le -k-l$}
		\end{equation}
		For $k < 0, l \ge 0,  i > (-k - l) \vee 0$, 
		\begin{equation}  \label{k < 0, l ge 0}
		D^{(\alpha)}_{k}D^{(\beta)}_{l} e_i =  D^{(\alpha)}_{k}(\beta_i e_{i+l}) = \beta_i \alpha_{i+l+k} e_{i+l+k},\quad \text{and $0$ for $1 \le i \le -k-l$}
		\end{equation}
		For $k \ge 0, l < 0, i > -l$
		\begin{equation} \label{k ge 0, l < 0}
		D^{(\alpha)}_{k}D^{(\beta)}_{l} e_i = D^{(\alpha)}_{k}(\beta_{i+l} e_{i+l}) 
		= \beta_{i+l} \alpha_{i+l} e_{i + k + l}, \quad \text{and $0$ \text{for} $1 \le i \le -l$} 
		\end{equation}\end{proof}
	
	Observe that the sets of upper (lower) diagonals and the sets of strictly upper (strictly lower) diagonals form subsemigroups of $\mathfrak D$. Moreover, their subsets with eventually periodic weight sequences also form subsemigroups of $\mathfrak D$.  Indeed, all this follows naturally from Equations (\ref{k,l ge 0})-(\ref{k ge 0, l < 0}) focusing on the weight sequence of the products Equations (\ref{LL,UU,UL})-(\ref{LU}). 
	
	However, regarding the two classes in $\mathfrak D$ with periodic and eventually periodic weight sequences, the first is not a subsemigroup because of Equation (\ref{LU}). But the eventually periodic ones are.
	To see that those elements in $\mathfrak D$ with eventually periodic weights is a multiplicative semigroup, observe that the product of two periodic sequences has period at least the smallest common multiple of their periods (possibly smaller).

	Then in particular, for a weighted shift operator $T$ with periodic weight sequence, so a strictly lower diagonal matrix, every word in $\Sc(T, T^*)$ is eventually periodic with the same period. To codify,
	
	\begin{corollary} \label{semigroups of periodic diagonals}
		If $T$ is a weighted shift in  $\mathfrak D$ with periodic (or eventually periodic) weight sequence of period $p$, then any $A\in \Sc(T,T^*)$ (any word, that is, any finite product of $T~\text{and/or}~T^*$) is a strictly lower, strictly upper or main diagonal with eventually periodic weight sequence with the same period $p$.
	\end{corollary}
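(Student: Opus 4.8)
The plan is to induct on the word length, using Proposition \ref{multiplication} to keep every partial product inside $\mathfrak D$ and to track its weight sequence through the explicit product formulas. First I would record that the two generators already have the required form: writing $T = D^{(\alpha)}_{1}$ (a strictly lower diagonal, since $Te_i = \alpha_i e_{i+1}$) and $T^* = D^{(\bar\alpha)}_{-1}$ (a strictly upper diagonal, since $T^*e_i = \bar\alpha_{i-1}e_{i-1}$ for $i \ge 2$ and $T^*e_1 = 0$), both weight sequences $\{\alpha_j\}$ and $\{\bar\alpha_j\}$ are (eventually) periodic of period $p$, because complex conjugation plainly preserves (eventual) periodicity.

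Next I would isolate the three closure facts about eventually periodic sequences of period $p$ that drive the induction: (a) any index shift $\{\gamma_{i+c}\}$ of an eventually periodic sequence $\{\gamma_i\}$ of period $p$ is again eventually periodic of period $p$; (b) the pointwise product $\{\gamma_i \delta_i\}$ of two eventually periodic sequences of period $p$ is eventually periodic of period $p$ (for the tails, $(\gamma\delta)_{i+p} = \gamma_{i+p}\delta_{i+p} = \gamma_i\delta_i$); and (c) prepending finitely many zeros to an eventually periodic sequence of period $p$ leaves it eventually periodic of period $p$ (one merely enlarges the threshold $N$ in Definition \ref{definition of eventually periodic}). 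The emphasis here is that $p$ remains \emph{a} period, not that it is the minimal period --- exactly the subtlety flagged in the discussion following Proposition \ref{multiplication}, where the minimal period may shrink but $p$ always survives as a period.

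With these in hand the induction is straightforward. Write an arbitrary word as $A = X_1 X_2 \cdots X_n$ with each $X_j \in \{T, T^*\}$, and set $P_1 = X_1$, $P_{j+1} = P_j X_{j+1}$. By Proposition \ref{multiplication} each $P_j = D^{(\gamma^{(j)})}_{k_j}$ lies in $\mathfrak D$ with $k_{j+1} = k_j \pm 1 \in \mathbb Z$, so $A$ is in particular a single strictly lower, main, or strictly upper diagonal. For the weight sequence, the base case $P_1 \in \{T, T^*\}$ is eventually periodic of period $p$ by the first paragraph. For the inductive step, each entry of $\gamma^{(j+1)}$ is, by Equations (\ref{LL,UU,UL})--(\ref{LU}), a product of an index shift of $\gamma^{(j)}$ with an index shift of the generator weights (possibly after prepending finitely many zeros in the case of Equation (\ref{LU})); by facts (a)--(c) this is again eventually periodic of period $p$. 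Hence $A = P_n$ has the claimed form.

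The main obstacle to watch is the bookkeeping in the mixed case $k \ge 0,\, l < 0$ of Equation (\ref{LU}): there the formula both reindexes the weights and inserts a block of leading zeros, so one must verify that neither operation disturbs the period $p$ --- which is precisely what facts (a) and (c) guarantee. Everything else is routine tracking of indices through the four cases of Proposition \ref{multiplication}.
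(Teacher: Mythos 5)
Your argument is correct and follows essentially the same route as the paper, which justifies the corollary by the preceding discussion: Proposition \ref{multiplication} shows products stay in $\mathfrak D$, and the weight sequence of each product, being built from index shifts, pointwise products, and (in the case of Equation (\ref{LU})) prepended zeros, remains eventually periodic with period $p$. Your explicit induction on word length and your isolation of closure facts (a)--(c) merely make precise what the paper leaves as an observation, including the correct emphasis that $p$ survives as \emph{a} period and that only \emph{eventual} periodicity is preserved.
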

	
	So in short, for $T$ a nonzero weighted shift, it is now clear that any $A\in \Sc(T,T^*)$ (i.e., word in $T$ and $T^*$) has matrix representation (with respect to a fixed orthonormal basis) with exactly one nonzero diagonal. 
	Furthermore, if $T$ has a periodic weight sequence, then the nonzero diagonal of $A$ has an eventually periodic weight sequence with the same period (possibly having some initial weights zero).
	
	To proceed with our SI characterization, we also need the concept of periodic mean. 
	\begin{definition}\label{first p-mean definition}
		For a sequence $\alpha=\{\alpha_j\}$ with period $p$, we define the periodic mean $q:= |\alpha_1\alpha_2\cdots\alpha_p|^{1/p}$. 
		For an eventually periodic sequence $\alpha=\{\alpha_j\}$ with period $p$ (defined in Definition \ref{definition of eventually periodic}), we define its periodic mean $q=|\alpha_{N+1}\alpha_{N+2}\cdots\alpha_{N+p}|^{1/p}.$ So the case $N=0$ is the periodic case.
	\end{definition} 
	For a periodic sequence $\{\alpha_n\}$ in $\mathbb{C}$  with a period $p$, we define \textit{the} periodic mean $q$ as $q:=|\alpha_1\alpha_2\cdots\alpha_p|^{1/p}$. At first glance this definition may seem not well-defined because if a sequence has a period, then it has many periods, for instance clearly all multiples of that period are also periods. So for well-definedness it suffices then to show $q$ is independent of all periods $p$. Indeed let $r$ be the smallest period of a periodic sequence. Then any period $p$ must be a multiple of $r$ because otherwise $p = mr + s$ for some $0< s < r$, 
	hence for all $i \ge 1$, $\alpha_i = \alpha_{i+p} = \alpha_{i+mr+s} = \alpha_{i+s}$ so $s$ is also a period, against the minimality of $r$.
	It follows that the periodic mean is independent of the choice of the period $p$ for the sequence because for any period $p$ of the sequence, $p=mr$ for some $m\ge 1$. 
	Therefore, $q=|\alpha_1\alpha_2\cdots\alpha_p|^{1/p}=|\alpha_1\alpha_2\cdots\alpha_{mr}|^{1/mr}=|(\alpha_1\alpha_2\cdots\alpha_r)^m|^{1/mr}=|\alpha_1\alpha_2\cdots\alpha_r|^{1/r}$.\color{black}
	
	Also note that for any two periodic sequences $\{\alpha_n\}, \{\beta_n\}$ with the same period $p$ and periodic means $q_1,q_2$ respectively, the product sequence $\{\alpha_n\beta_n\}$ is periodic with the same period $p$ and  periodic mean $q_1q_2$. Clearly, $\{\alpha_n\beta_n\}$ is periodic with  period $p$ and its periodic mean is given by:
	$$|(\alpha_1\beta_1)(\alpha_2\beta_2)\cdots(\alpha_p\beta_p)|^{1/p}=(|\alpha_1\alpha_2\cdots \alpha_p|)^{1/p}(|\beta_1\beta_2\cdots \beta_p|)^{1/p}=q_1q_2.$$
	Likewise for eventually periodic sequences.
	
	And once $\{\alpha_n\}$ is a periodic (or eventually periodic) sequence with period $p$ and periodic mean $q$, so also is its tail sequences ($\{\alpha_{n+l}\}_{l\ge 1}$) $p$-periodic (or eventually $p$-periodic) with periodic mean $q$. Then from Equations~(\ref{LL,UU,UL}) and (\ref{LU}) we obtain:
	
	\begin{corollary}\label{periodic mean of products}
		
		For any $D^{(\alpha)}_{k},D^{(\beta)}_{l}\in \mathfrak{D}$ with periodic (or eventually periodic) sequences $\alpha, \beta$ with the same period $p$ and periodic means $q_1, q_2$, respectively, the product diagonal $D^{(\alpha)}_{k}D^{(\beta)}_{l} = D^{(\gamma)}_{k+l}$ has $\gamma$ an eventually periodic sequence with period $p$ and periodic mean $q_1q_2$. 
	\end{corollary}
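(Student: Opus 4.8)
The plan is to reduce the corollary to the two elementary facts about periodic means recorded in the prose immediately preceding its statement: the \emph{tail fact}, that a tail $\{\alpha_{n+l}\}_{l\ge 1}$ of a $p$-periodic (or eventually $p$-periodic) sequence is again $p$-periodic (or eventually $p$-periodic) with the same periodic mean; and the \emph{product fact}, that the pointwise product of two sequences sharing period $p$ with periodic means $q_1,q_2$ is $p$-periodic with periodic mean $q_1q_2$. Granting these, the corollary follows by reading off the weight sequence $\gamma$ of $D^{(\alpha)}_{k}D^{(\beta)}_{l}=D^{(\gamma)}_{k+l}$ from Proposition \ref{multiplication} in each of its four sign regimes for $(k,l)$ and recognizing $\gamma$, on the indices where it is nonzero, as precisely such a product.

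First I would dispatch the three regimes $k,l\ge 0$, $k,l<0$, and $k<0,\,l\ge 0$ together, since in each the formula of Equation (\ref{LL,UU,UL}) presents $\gamma$ as a pointwise product of an index-shifted copy of $\beta$ and an index-shifted copy of $\alpha$; for instance $\gamma=\{\beta_i\alpha_{i+l}\}$ when $k,l\ge 0$ and $\gamma=\{\beta_{i+l}\alpha_{i+l+k}\}$ when $k,l<0$. On the index range where $\gamma\neq 0$ (namely $i>(-k-l)\vee 0$), one checks that each factor sweeps out a tail of $\beta$ respectively of $\alpha$, so by the tail fact each factor is $p$-periodic (eventually $p$-periodic) with periodic mean $q_2$ respectively $q_1$; the product fact then gives $\gamma$ eventually $p$-periodic with periodic mean $q_1q_2$, as claimed in these regimes.

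The remaining regime $k\ge 0,\,l<0$ needs one extra remark because of the leading zeros in Equation (\ref{LU}), where $\gamma=0_{k\wedge -l}\oplus\{\beta_{i+l}\alpha_{i+l}\}_{i>-l}$. For $i>-l$ the index $i+l$ runs over $1,2,\dots$, so the nonzero tail is exactly the pointwise product $\{\alpha_j\beta_j\}_{j\ge 1}$, which is $p$-periodic with periodic mean $q_1q_2$ directly by the product fact; prepending the finite block $0_{k\wedge -l}$ merely makes $\gamma$ eventually periodic (with $N=k\wedge -l$ in the sense of Definition \ref{definition of eventually periodic}), and since the periodic mean of an eventually periodic sequence is computed from its periodic tail (Definition \ref{first p-mean definition}), these leading zeros leave it unchanged. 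Thus $\gamma$ is eventually periodic with period $p$ and periodic mean $q_1q_2$ in every regime. I expect the only delicate point to be bookkeeping -- verifying in each regime that the two factors really are tails of $\alpha$ and $\beta$ so that the tail fact applies, and checking that the finite zero block in the last regime is harmless -- rather than any conceptual difficulty, since the content is fully carried by Proposition \ref{multiplication} together with the tail and product facts.
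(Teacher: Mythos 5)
Your proposal is correct and follows essentially the same route as the paper, which states the corollary as an immediate consequence of the two observations preceding it (the product of same-period sequences has periodic mean $q_1q_2$, and tails preserve period and periodic mean) applied to the formulas for $\gamma$ in Equations (\ref{LL,UU,UL})--(\ref{LU}). Your write-up merely makes the case-by-case bookkeeping explicit, including the harmlessness of the leading zero block in the $k\ge 0$, $l<0$ regime, which the paper leaves implicit.
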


	In particular, given a weighted shift $T$ of periodic weight sequence with period $p$ and periodic mean $q$, then for any $A\in \Sc(T,T^*)$, in addition to being an eventually periodic diagonal with period $p$ (as discussed in Corollary~\ref{semigroups of periodic diagonals}), its periodic mean is $q^s$, where $s$ is the degree of $A$ (that is, the sum of the powers of $T$ and $T^*$ in the word $A$). To codify,
	
	\begin{proposition}\label{proposition3.9}
		Let $T$ be a weighted shift with periodic (or eventually periodic) weight sequence $\{\alpha_n\}$ with period $p$ and periodic mean $q$
		Then for $A\in \Sc(T,T^*)$ with $s = \text{degree}\,A$, the diagonal of $A$ has eventually periodic weight sequence of period $p$ with periodic mean $q^s$.
	\end{proposition}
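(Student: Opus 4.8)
The plan is to induct on the degree $s$ of the word $A$, using the multiplicativity of the periodic mean recorded in Corollary~\ref{periodic mean of products} together with the fact from Corollary~\ref{semigroups of periodic diagonals} that every word in $\Sc(T,T^*)$ is an eventually $p$-periodic diagonal. Recall that, by the word description of $\Sc(T,T^*)$, any $A$ of degree $s$ is a product of exactly $s$ factors, each of which is either $T$ or $T^*$, and the degree is precisely this count.

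First I would dispose of the base case $s=1$, where $A$ is either $T$ or $T^*$. By definition $T = D^{(\alpha)}_{1}$ has periodic mean $q$. Since $T^* = D^{(\bar{\alpha})}_{-1}$ and conjugation preserves absolute values, its periodic mean is $|\bar{\alpha}_{N+1}\cdots\bar{\alpha}_{N+p}|^{1/p} = |\alpha_{N+1}\cdots\alpha_{N+p}|^{1/p} = q$ as well; thus both generators are eventually $p$-periodic diagonals of periodic mean $q = q^1$. For the inductive step, given a word $A$ of degree $s+1$, factor $A = A'G$ with $G \in \{T, T^*\}$ and $A'$ a word of degree $s$. By Corollary~\ref{semigroups of periodic diagonals} both $A'$ and $G$ are eventually $p$-periodic diagonals, so Corollary~\ref{periodic mean of products} applies and yields that $A = A'G$ is an eventually $p$-periodic diagonal whose periodic mean is the product of the two periodic means. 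By the inductive hypothesis $A'$ has periodic mean $q^s$ and $G$ has periodic mean $q$, whence $A$ has periodic mean $q^s \cdot q = q^{s+1}$, closing the induction.

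There is no substantive obstacle here: the statement is essentially a bookkeeping corollary of the multiplicativity in Corollary~\ref{periodic mean of products}. The only two points meriting a line of verification are that $T$ and $T^*$ share the common periodic mean $q$ (immediate from $|\bar{\alpha}_j| = |\alpha_j|$), and that the period $p$ is preserved under every multiplication along the word, which is exactly the content of Corollary~\ref{semigroups of periodic diagonals}. Both have already been secured, so the induction runs without difficulty.
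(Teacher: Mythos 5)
Your proof is correct and follows essentially the same route as the paper: the paper also invokes Corollary~\ref{semigroups of periodic diagonals} for the eventual $p$-periodicity and then runs an induction on the degree $s$ using the multiplicativity of the periodic mean from Corollary~\ref{periodic mean of products}. You have merely spelled out the base case (that $T^*$ shares the periodic mean $q$ with $T$) and the inductive step more explicitly than the paper does.
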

	\begin{proof}
		It follows from Corollary \ref{semigroups of periodic diagonals} that $A$ is a diagonal (that is, a strictly upper or strictly lower or main) with eventually periodic weight sequence with period $p$. Also since both $T$ and $T^*$ are diagonals (strictly lower and strictly upper, respectively) with periodic mean $q$ and $A$ being a word in $T$ and $T^*$, it follows by applying induction on $s$ (the degree of $A$) and using Corollary \ref{periodic mean of products} that $A$ is eventually periodic with period $p$ and has periodic mean $q^s$.
	\end{proof}
	In what follows we denote the zero matrix in $M_k(\mathbb{C})$ by $0_k$, and we use the same symbol to denote the zero sequence of length $k$ as well depending on the obvious context. 
	
	Before we can give the SI characterzation for $\Sc(T,T^*)$ generated by a weighted shift with weight sequence $0_k\oplus \{\alpha_n\}$ such that $\{|\alpha_n|\}$ is a $p$-periodic sequence of nonzero numbers as promised in this periodic subsection, we need the following result concerning periodic means. Also in its proof we will see how the SI characterization for the more general class (that is, with weight sequence $0_k\oplus \{\alpha_n\}$ where $\{|\alpha_n|\}$ is $p$-periodic) reduces to the SI chacterization for periodic weight sequences of strictly positive weights.
	\begin{theorem}\label{first p-theorem}
		Let $T$ be a weighted shift with weights $0_k\oplus \{\alpha_n\}$ where $\{|\alpha_n|\}$ is a $p$-periodic sequence of nonzero numbers with periodic mean $q$. If $\Sc(T,T^*)$ is an SI semigroup, then $|\alpha_1\alpha_2\cdots\alpha_p|=1$ (i.e., $q=1$), or equivalently, $T^p=0_k\oplus U$ with $U$ an isometry.
	\end{theorem}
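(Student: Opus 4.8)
The plan is to establish the implication (SI $\Rightarrow q=1$) in three moves: two normalizing reductions followed by a short degree-counting argument resting on the multiplicativity of periodic means (Proposition~\ref{proposition3.9}). First I would normalize the weights. Since the SI property is a unitary invariant of the semigroup, and since every weighted shift is unitarily equivalent, via a diagonal unitary adjusting the phases of its weights, to the weighted shift carrying the absolute values of those weights (the zero weights staying zero), I may assume all weights are nonnegative reals; thus $T$ has weights $0_k \oplus \{\alpha_n\}$ with $\{\alpha_n\}$ a strictly positive $p$-periodic sequence. Next I would strip off the leading zeros: since the first $k$ weights vanish, the subspace $\spans\{e_1,\dots,e_k\}$ reduces $T$ and $T$ acts as $0$ there, so $T = 0_k \oplus T'$, where $T'$ is the weighted shift on $\spans\{e_{k+1},e_{k+2},\dots\}$ with the strictly positive $p$-periodic weights $\{\alpha_n\}$. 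Because $T^* = 0_k \oplus T'^*$, every word satisfies $W = 0_k \oplus W'$, so $W \mapsto W'$ is an isomorphism of involutive semigroups $\Sc(T,T^*) \cong \Sc(T',T'^*)$; as the SI property is defined purely in terms of the semigroup and its involution, $\Sc(T,T^*)$ is SI if and only if $\Sc(T',T'^*)$ is. This reduces the problem to $k=0$ with strictly positive $p$-periodic weights and periodic mean $q = |\alpha_1\cdots\alpha_p|^{1/p}$.

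Now comes the heart of the argument. Assuming $\Sc(T',T'^*)$ is SI, the principal ideal $(T')_{\Sc(T',T'^*)}$ is selfadjoint, so by \cite[Lemma 1.9]{PW21} there exist $X,Y \in \Sc(T',T'^*) \cup \{I\}$ with $T'^* = X T' Y$. A nonzero weighted shift is never selfadjoint, so $X$ and $Y$ cannot both equal $I$; hence $\deg X + \deg Y \ge 1$ and the word $X T' Y$ has degree $\deg X + 1 + \deg Y \ge 2$. By Proposition~\ref{proposition3.9}, any element of $\Sc(T',T'^*)$ written as a word of degree $s$ has periodic mean $q^s$. Since the periodic mean is an attribute of the underlying diagonal and is independent of the chosen word, comparing the two representations of the single operator $T'^*$, namely the degree-one word $T'^*$ and the degree-$(\deg X + 1 + \deg Y)$ word $X T' Y$, yields $q = q^{\,\deg X + 1 + \deg Y}$. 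Because $q>0$ and the exponent exceeds $1$, this forces $q = 1$, i.e.\ $|\alpha_1\cdots\alpha_p| = 1$.

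Finally, the ``equivalently'' clause is a direct computation from Equation~(\ref{def1'}): writing the nonzero weights of the original $T$ as $\alpha_1,\alpha_2,\dots$, one has $T^p e_i = 0$ for $i \le k$, while $T^p e_{k+n} = (\alpha_n\cdots\alpha_{n+p-1})\,e_{k+n+p}$ for $n \ge 1$, so $T^p = 0_k \oplus U$. Since $\{|\alpha_n|\}$ is $p$-periodic, the product over any block of $p$ consecutive terms is constant, $|\alpha_n\cdots\alpha_{n+p-1}| = |\alpha_1\cdots\alpha_p| = q^p$, so $U$ is an isometry exactly when $q^p = 1$, i.e.\ when $q = 1$.

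The genuinely delicate points I expect are the two reductions rather than the punchline: one must verify that the SI property really does transfer across the phase normalization (via unitary invariance) and across the direct-sum splitting $T = 0_k \oplus T'$ (via the involutive-semigroup isomorphism $W \mapsto W'$). Once these are in place, the degree mismatch between the two word representations of $T'^*$, combined with multiplicativity of the periodic mean, delivers $q=1$ immediately.
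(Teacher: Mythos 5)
Your proposal is correct and follows essentially the same route as the paper's proof: the same two reductions (splitting off the zero block $T = 0_k \oplus T'$ and passing to the modulus weight sequence via a diagonal unitary, which the paper does in the opposite order citing \cite[Problem 89]{Hal82}), followed by the same degree-counting argument that compares the periodic mean $q$ of $T'^*$ with the periodic mean $q^{s}$, $s \ge 2$, of the word $XT'Y$ supplied by Proposition~\ref{proposition3.9}, and the same direct computation ${T^p}^*T^p = (\alpha_1\cdots\alpha_p)^2 I$ for the isometry equivalence. No gaps.
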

	\begin{proof}
		First we reduce $T$ to the case where all weights are strictly positive. Given that $T$ has initial $k$-weights zero, so $T=0_k\oplus T_1$ where $T_1$ is a weighted shift with all nonzero weights $\{\alpha_n\}$. Therefore clearly $\Sc(T,T^*)$ is an SI semigroup if and only if $\Sc(T_1,T_1^*)$ is SI. And $T^p=0_k\oplus U$ with $U$ an isometry if and only if $T^p_1$ is an isometry. Also it is straight forward to check that $T^p_1$ is an isometry if and only if  $|\alpha_1\alpha_2\cdots\alpha_p|=1.$ Furthermore $T_1$ is unitarily equivalent to a weighted shift $S$ with $p$-periodic weight sequence $\{|\alpha_n|\}$  (see \cite[Problem 89]{Hal82}). Therefore $\Sc(T_1,T^*_1)$ is SI if and only if  $\Sc(S,S^*)$ is SI. And the property of being an isometry is preserved under unitary equivalence. 
		Hence in order to prove that $\Sc(T,T^*)$ being SI implies  $T^p_1$ is an isometry, it suffices to prove the same for $S$. That is, without loss of generality we can assume that $T$ has $p$-periodic strictly positive weight sequence $\{\alpha_n\}$.
		
		%Given a weighted shift $T$ with weights $\{\alpha_n\}$ is unitarily equivalent to a weighted shift with weights  $\{|\alpha_n|\}$. Also SI property and $T^p$ being isometry both are unitarily invariant. Moreover,  $|\alpha_1\alpha_2\cdots\alpha_p|=1$ is equivalent to $|\alpha_1||\alpha_2|\cdots|\alpha_p|=1$. Hence it suffices to prove the statement for a weighted shift $T$ with $p$-periodic positive weights $\{|\alpha_n|\}$.
		Suppose $\Sc(T,T^*)$ is an SI semigroup. Then the principal ideal $(T)_{\Sc(T,T^*)}$ is selfadjoint. Therefore, $T^*=XTY$ for some $X,Y\in \Sc(T,T^*)\cup \{I\}$, where $X,Y$ cannot both be the identity operator $I$ because $T$ is nonselfadjoint. Since $T^*$ is a strictly upper diagonal matrix with $p$-periodic weight sequence and $T^*=XTY$, so is $XTY$. Also, $XTY$ being a finite product of $T,T^*$, the sum $s$ of the powers of $T$ and $T^*$ in $XTY$ is greater than one as $X$ or $Y$ is not the identity operator. Then it follows from Proposition \ref{proposition3.9} that the periodic weight sequence of $XTY$ in $T^*=XTY$ must have the periodic mean $q^s$, where $s\ge 2$.
		Then because all $\alpha_n$ are nonzero, $q \ne 0$, and because $T^*=XTY$, their weighted sequences have the same periodic mean, so $q = q^s$ implying $q=1$ which further implies that  $\alpha_1\alpha_2\cdots\alpha_p=1$. 
		
		To see that the condition $\alpha_1\alpha_2\cdots\alpha_p=1$ is equivalent to $T^p$ being an isometry, observe that in Equations (\ref{def1'})-(\ref{def2'}) for $m=p$, both products remain constant when the sequence is $p$-periodic and observe that ${T^p}^*T^p =(\alpha_1\alpha_2\cdots\alpha_p)^2\cdot I$, which shows  $T^p$ is an isometry if and only if $\alpha_1\alpha_2\cdots\alpha_p=1$.
\end{proof}
	Next we prove the converse of above Theorem~\ref{first p-theorem}. That is, for a given weighted shift $T$ for which $T^p=0_k\oplus U$ with $U$ an isometry, $\Sc(T,T^*)$ is an SI semigroup and in fact it is simple (see Theorem~\ref{periodic SI characterization}). Towards proving this, observe that since $T$ has initial $k$-weights zero, we can write $T=0_k\oplus T_1$, where $T_1$ is a weighted shift with all nonzero weights. Then clearly the SI property and simplicity of $\Sc(T,T^*)$ is equivalent to the SI property and simplicity of $\Sc(T_1,T_1^*)$ respectively. Also the condition that $T^p=0_k\oplus U$ with $U$ an isometry is equivalent to $T^p_1$ being an isometry. Therefore in proving Theorem \ref{periodic SI characterization},without loss of generality we can assume that $T^p$ is an isometry. In the next proposition, we prove the simplicity (hence the SI property) of the semigroup  $\Sc(T,T^*)$ generated by a weighted shift $T$ for which $T^p$ an isometry.	
\begin{proposition}\label{p-isometry}
Let $T$ be a weighted shift for which $T^p$ is an isometry for some $p \geq 1$. Then $\Sc(T,T^*)$ is simple.
\end{proposition}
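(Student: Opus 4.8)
The plan is to solve the bilinear equation $XAY=I$ inside $\Sc(T,T^*)$ for every word $A$, after first putting $T$ into a convenient normal form. First I would record the consequences of the hypothesis: since $T^p$ is an isometry, $T^{*p}T^{p}=I$, so $I\in\Sc(T,T^*)$ and the semigroup is a monoid; moreover every weight is nonzero and $|\Pi_{j=i}^{i+p-1}\alpha_j|=1$ for all $i$. Because simplicity is a purely algebraic property invariant under unitary conjugation, and a weighted shift is unitarily equivalent to the one with weights $|\alpha_j|$ (\cite[Problem 89]{Hal82}), I may assume the weights are strictly positive. Then $\Pi_{j=i}^{i+p-1}\alpha_j=1$ for all $i$, and dividing two consecutive such products gives $\alpha_{i+p}=\alpha_i$; thus the weight sequence is automatically $p$-periodic with product $1$ over each period. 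Consequently, by Proposition \ref{proposition3.9}, every word $A$ is an eventually $p$-periodic diagonal whose periodic mean equals $1^{\deg A}=1$, i.e. the product of its weights over one period is $1$.

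Next I would reduce simplicity to a divisibility statement. Since $I\in\Sc(T,T^*)$, the two-sided principal ideal of a word $A$ is $(A)=\{XAY: X,Y\in\Sc(T,T^*)\cup\{I\}\}$. If $I\in(A)$ for every word $A$, then simplicity follows: writing $I=XAY$, any $B\in\Sc(T,T^*)$ satisfies $B=BI=(BX)AY\in(A)$, so $(A)=\Sc(T,T^*)$ and no proper ideal exists. Taking adjoints shows $I\in(A)\iff I\in(A^*)$, so I may assume the net shift $k$ of $A$ (the number of $T$'s minus the number of $T^*$'s) satisfies $k\ge 0$.

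For the construction, write $Ae_i=\gamma_i e_{i+k}$, where $\gamma_i>0$ for $i>r$ and $\gamma_i=0$ for $i\le r$ (finitely many leading zeros). I would choose $M$ with $pM>r$, set $Y=T^{pM}$ and $X=WT^{*(pM+k)}$ for a main-diagonal word $W=\diag(1/c_i)$ still to be produced, where $c_i:=(\Pi_{j=i}^{i+k-1}\alpha_j)\,\gamma_{i+pM}$. Since $\Pi_{j=i}^{i+pM-1}\alpha_j$ is a product of $M$ full periods it equals $1$, and a direct computation gives $T^{*(pM+k)}AY e_i=c_i e_i$, so $XAY e_i=c_i\,We_i=e_i$, i.e. $XAY=I$, provided $W$ exists. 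Here $c_i>0$ for all $i\ge 1$ because $i+pM>r$, the sequence $\{c_i\}$ is positive and $p$-periodic, and its product over one period is $\big(\Pi_{l=0}^{k-1}\Pi_{i=1}^{p}\alpha_{i+l}\big)\big(\Pi_{i=1}^{p}\gamma_{i+pM}\big)=1\cdot 1=1$.

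The hard part will be realizing the corrective diagonal $W=\diag(1/c_i)$ as an honest element of $\Sc(T,T^*)$. Two facts make this work. First, since each period product is $1$, one has $\alpha_j^{-1}=\Pi_{l=1}^{p-1}\alpha_{j+l}$, so the reciprocal of any product of weights is again a product of weights with nonnegative exponents. Second, every main-diagonal word corresponds to a closed walk on the index line (with $T$ and $T^*$ the right and left steps), hence traverses each weight an even number of times, so its weight sequence has even exponents; a parity count shows $c_i$, and therefore $1/c_i$, also has even exponents. The zero-free sequences $\diag(\Pi_{j=i}^{i+l}\alpha_j^{2})$ are weights of the words $T^{*l}(T^*T)T^{l}$, and their pointwise ratios realize each $\diag(\alpha_{i+l}^{2})$ in the group of even-exponent, period-product-$1$, $p$-periodic positive sequences; since $1/c_i$ lies in this group and is expressible with nonnegative exponents after reciprocal-conversion, it is the weight sequence of some main-diagonal word $W$. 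I expect this realizability step — the interplay of the even-exponent parity with the reciprocal-conversion afforded by the unit period product — to be the main technical point; once $W$ is produced one has $XAY=I$, and simplicity of $\Sc(T,T^*)$ follows.
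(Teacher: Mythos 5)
Your overall strategy -- reduce to positive weights, reduce simplicity to solving $XAY=I$ for every word $A$, and then build $X,Y$ explicitly using the fact that full-period products of weights equal $1$ -- is sound and genuinely different from the paper's route. But the proof has a real gap exactly where you flag it: the realizability of the corrective diagonal $W=\diag(1/c_i)$ as an honest word in $T$ and $T^*$. Your argument for this step reasons inside the \emph{group} of positive $p$-periodic even-exponent sequences with unit period product (``their pointwise ratios realize each $\diag(\alpha_{i+l}^{2})$ in the group\dots since $1/c_i$ lies in this group\dots it is the weight sequence of some main-diagonal word''). That is a non sequitur: the diagonals of main-diagonal words form only a sub\emph{semigroup}, and pointwise ratios are not available there. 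Concretely, the diagonal of a main-diagonal word at position $i$ is $\prod_l \alpha_{i+l}^{2m_l}$ where $(m_0,m_1,\dots)$ records how often the associated closed walk crosses each edge; such a profile must be anchored at offset $0$ and have no internal zeros (a walk cannot cross edge $(i+1,i+2)$ without crossing $(i,i+1)$), so for instance $\diag(\alpha_{i+1}^{2})$ is \emph{not} directly realizable even though it lies in your group. Membership in the group plus ``expressible with nonnegative exponents after reciprocal-conversion'' therefore does not by itself produce a word. The step can be repaired -- after reciprocal-conversion one can multiply by full-period factors such as $T^{*(p-1)}(T^*T)T^{p-1}$, whose diagonal is $(\prod_{j=i}^{i+p-1}\alpha_j)^2=1$, to make every edge multiplicity positive, and then an Eulerian-circuit argument on the resulting balanced directed multigraph produces the required walk/word -- but none of this is in your write-up, and it is precisely the content of the proposition in your approach.

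For comparison, the paper avoids the issue entirely: it never needs to realize an arbitrary corrective diagonal. It first notes ${T^*}^{mp}T^{mp}=I$, so the principal ideals of all powers $T^m$ and ${T^*}^m$ are the whole semigroup, and then, for a general word $A=\Pi_{j=1}^{k}{T^*}^{m_j}T^{n_j}$, it left-multiplies by explicit words built only from powers of $T$ and $T^*$ (using that diagonal operators like $T^k{T^*}^k$ and ${T^*}^lT^l$ commute) to strip off one factor ${T^*}^{m_j}T^{n_j}$ at a time, eventually landing on ${T^*}^m\in (A)_{\Sc(T,T^*)}$. If you want to keep your more direct ``solve $XAY=I$'' construction, you must supply the Eulerian/padding argument for $W$; as written, the proof is incomplete at its decisive step.
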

\begin{proof}
In order to prove the simplicity of $\Sc(T,T^*)$, it suffices to prove that all its principal ideals coincide with $\Sc(T,T^*)$. We do this by showing the principal ideal generated by each of the six forms of the semigroup list coincide with the semigroup $\Sc(T, T^*)$. Recall the semigroup list for $\Sc(T,T^*)$ \cite[Proposition 1.6]{PW21}:
\noindent $\Sc(T,T^*) =  \{T^n, {T^*}^n,  \Pi_{j=1}^{k}{T^*}^{m_j}T^{n_j},
		(\Pi_{j=1}^{k}{T^*}^{m_j}T^{n_j}){T^*}^{m_{k+1}},\Pi_{j=1}^{k}T^{n_j}{T^*}^{m_j},$  
		$(\Pi_{j=1}^{k}T^{n_j}{T^*}^{m_j})T^{n_{k+1}}\},$ where $n \ge 1,\,  k\ge1,\, n_j, m_j \ge 1\, \text{ for }\, 1 \le j \le k \text{ and } n_{k+1},m_{k+1}\geq 1$.
		
First we prove the principal ideals generated by the first and second forms coincide with $\Sc(T,T^*)$. Since ${T^*}^pT^{p}=I$, using induction one also has ${T^*}^{mp}T^{mp} = I$ for each $m\geq 1$. So $I \in (T^m)_{\Sc(T, T^*)}$ and $I \in ({T^*}^m)_{\Sc(T, T^*)}$ for each $m\geq 1$ and hence, 
\begin{equation}\label{ideals equation}
		(T^m)_{\Sc(T, T^*)} = ({T^*}^m)_{\Sc(T, T^*)} = \Sc(T, T^*)\qquad \text{for all}~m\geq 1.
\end{equation} 
		Next we prove that the principal ideal $\mathcal{J}$ generated by any of the last four forms in the above semigroup list contains an operator ${T^*}^m$ for some $m \ge 1$. Then $\mathcal{J}$ would contain the principal ideal $({T^*}^m)_{\Sc(T, T^*)}$, which is $\Sc(T, T^*)$ by Equation (\ref{ideals equation}). But first observe that every principal ideal generated by a fourth or fifth or sixth form contains an operator of the third form. 
Because multiplying on the left or right or both sides of the operator that generates the principal ideal, by $T$ or $T^*$, one can obtain the operator of the third form. Therefore it suffices to prove that the principal ideals generated by each of the third form contain ${T^*}^m$ for some $m \ge 1$.
		
Consider an operator $A$ in the third form. So $A :=  \Pi_{j=1}^{k}{T^*}^{m_j}T^{n_j}$ for some $k\geq 1, m_j, n_j \geq 1$. We consider two cases: $n_1 < p$ and $n_1 \ge p$.\\
		\textit{Case 1}. Suppose $n_1 < p$. Choose $m>1$ for which $mp-n_1>0$ and $mp-m_1>0$. 
		Let $Y_1:= {T^*}^pT^{p-n_1}{T^*}^{mp-n_1}$, $Y_2:=T^{mp}{T^*}^{n_1}{T^*}^{mp-m_1}$ and $Y:=Y_1Y_2$. Then, by re-writing $A = {T^*}^{m_1}T^{n_1}X$ where $X=\Pi_{j=2}^{k}{T^*}^{m_j}T^{n_j}$ for $k\ge 2$ and $X=I$ for $k=1$. And using the fact that $(T^k{T^*}^k)({T^*}^lT^l)=({T^*}^lT^l)(T^k{T^*}^k)$ because the operators inside the parentheses are diagonal operators and so they commute, one obtains
\begin{align*}
		YA=Y_1Y_2A = Y_1Y_2{T^*}^{m_1}T^{n_1}X &= Y_1(T^{mp}{T^*}^{n_1}{T^*}^{mp-m_1}){T^*}^{m_1}T^{n_1}X \\
		&= Y_1T^{mp}({T^*}^{n_1}{T^*}^{mp})T^{n_1}X \\
		&= Y_1T^{mp}({T^*}^{mp}{T^*}^{n_1})T^{n_1}X \\
		&= Y_1(T^{mp}{T^*}^{mp})({T^*}^{n_1}T^{n_1})X\\
		&= Y_1({T^*}^{n_1}T^{n_1})(T^{mp}{T^*}^{mp})X\qquad \text{(both are diagonals and so commute)}\\
		&= Y_1({T^*}^{n_1}T^{mp})(T^{n_1}{T^*}^{mp})X\qquad \text{(as $T^{n_1}$ and $T^{mp}$ commute)}\\
		&= ({T^*}^pT^{p-n_1}{T^*}^{mp-n_1}({T^*}^{n_1}T^{mp}))T^{n_1}{T^*}^{mp}X\\
		&=({T^*}^pT^{p-n_1}{T^*}^{mp}T^{mp})T^{n_1}{T^*}^{mp}X\\
		&= {T^*}^{mp}X\qquad \text{(because ${T^*}^{mp}T^{mp}=I$ and also ${T^*}^pT^p=I$)}
		\end{align*}
		This shows that for some $Y\in \Sc(T,T^*)$ and $m\ge 1$, $YA={T^*}^{m}X$.
		
		Before proceeding further with Case 1 ($n_1< p$), we need to show that Case 2 ($n_1 \ge p$) reduces to the same form in Case 1 (that $YA= {T^*}^{m}X$ for some  $Y\in \Sc(T,T^*)$ and $m\ge 1$). \\
		\textit{Case 2.} Suppose $n_1 \geq p$. Then $n_1 = rp + s$ for some $r\geq 1, 0\leq s <p$. By left multiplying $B= {T^*}^{rp}$ and because ${T^*}^{rp}T^{rp}=I$ (since $T^p$ is an isometry) one obtains
		$$BA = {T^*}^{rp}{T^*}^{m_1}(T^{n_1})X={T^*}^{rp}{T^*}^{m_1}(T^{rp}T^s)X= {T^*}^{m_1}({T^*}^{rp}T^{rp})T^sX ={T^*}^{m_1}T^sX.$$
		So, $BA = {T^*}^{m_1}T^sX$ where $0\leq s<p$. For $s=0, BA = {T^*}^{m_1}X$, the desired form ($YA={T^*}^{m}X$ for some $Y\in \Sc(T,T^*)$ and $m\ge 1$). For $0< s<p$, $BA = {T^*}^{m_1}T^sX$. Choose $Y$ as in Case 1 (based on $BA$, in particular only on $m_1$ and $s$) to obtain $YBA = {T^*}^mX$ for some $m \geq 1$ and $Y\in \Sc(T, T^*)$. Hence for Case 2 we also have the Case 1 conclusion: $YA={T^*}^{m}X$ for some  $Y\in \Sc(T,T^*)$ and $m\ge 1$.
		
		Hence in both the cases, $YA = {T^*}^{m}(\Pi_{j=2}^{k}{T^*}^{m_j}T^{n_j})$ or $YA={T^*}^{m}$ depending on whether $k\ge 2$ or $k=1$. If $YA={T^*}^{m}$, then clearly  ${T^*}^{m}\in (A)_{\Sc(T, T^*)}$ and so as explained earlier, it follows from Equation (\ref{ideals equation}) that $$\Sc(T,T^*)=({T^*}^m)_{\Sc(T,T^*)}\subseteq(A)_{\Sc(T,T^*)}\subseteq\Sc(T,T^*),$$ hence $(A)_{\Sc(T,T^*)}=\Sc(T,T^*)$. For the case $YA = {T^*}^{m}(\Pi_{j=2}^{k}{T^*}^{m_j}T^{n_j})$, $YA =\Pi_{j=2}^{k}{T^*}^{m'_j}T^{n_j} $ where $m'_2 = m_2+m$ and $m'_j = m_j$ for $j\geq 3$. Setting $Y_1=Y$ and applying this same process to $Y_1A$ that we initially applied to $A$ obtains $Y_2 \in \Sc(T,T^*)$ for which $Y_2Y_1A={T^*}^m(\Pi_{j=3}^{k}{T^*}^{m_j}T^{n_j})$ for some $m\ge 1$. And continuing obtains $Y_k\cdots Y_1A={T^*}^m$ for some $Y_i$'s $\in \Sc(T, T^*)$ for $1\le i \le k$ and $m\ge 1$. Hence ${T^*}^{m}\in (A)_{\Sc(T, T^*)}$. And again from Equation (\ref{ideals equation}), $$\Sc(T,T^*)=({T^*}^m)_{\Sc(T,T^*)}\subseteq(A)_{\Sc(T,T^*)}\subseteq\Sc(T,T^*),$$ hence $(A)_{\Sc(T,T^*)}=\Sc(T,T^*)$. This completes the proof that for every $A\in \Sc(T,T^*)$, $(A)_{\Sc(T,T^*)}=\Sc(T,T^*)$ which clearly implies the simplicity of $\Sc(T,T^*)$.
	\end{proof}
	Now as promised in the first paragraph of this periodic subsection, using Proposition~\ref{p-isometry} and Theorem~\ref{first p-theorem} above, we directly obtain the following Theorem \ref{periodic SI characterization} SI characterization for the class of weighted shifts with weight sequence $0_k \oplus \{\alpha_i\}$ where $\{|\alpha_i|\}$ is a periodic sequence with strictly positive weights.
	\begin{theorem}\label{periodic SI characterization}
		Let $T$ be a weighted shift with weights $0_k\oplus \{\alpha_n\}$ where $\{|\alpha_n|\}$ is a $p$-periodic sequence of nonzero numbers. Then the following are equivalent.
		\begin{enumerate}[label=(\roman*)]
			\item $\Sc(T,T^*)$ is an SI semigroup.
			\item $T^p = 0_k \oplus U$ with $U$ an isometry.
			\item $\Sc(T,T^*)$ is simple.
		\end{enumerate}
	\end{theorem}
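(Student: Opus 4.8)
The plan is to establish the three equivalences by the cyclic chain of implications (i)$\Rightarrow$(ii)$\Rightarrow$(iii)$\Rightarrow$(i), relying almost entirely on the two results just proved, so that essentially no new computation is required. First I would note that the implication (i)$\Rightarrow$(ii) is precisely the content of Theorem \ref{first p-theorem}: assuming $\Sc(T,T^*)$ is SI, that theorem forces the periodic mean $q=1$, equivalently $T^p = 0_k \oplus U$ with $U$ an isometry. Nothing further is needed for this direction.

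For (ii)$\Rightarrow$(iii), I would invoke the reduction recorded just before Proposition \ref{p-isometry}. Writing $T = 0_k \oplus T_1$ with $T_1$ a weighted shift having all nonzero weights, the simplicity of $\Sc(T,T^*)$ is equivalent to the simplicity of $\Sc(T_1,T_1^*)$, and the hypothesis $T^p = 0_k \oplus U$ (with $U$ an isometry) is equivalent to $T_1^p$ being an isometry. Proposition \ref{p-isometry}, applied to $T_1$, then yields the simplicity of $\Sc(T_1,T_1^*)$, and hence of $\Sc(T,T^*)$.

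Finally, (iii)$\Rightarrow$(i) is immediate from the definitions, since simple semigroups are trivially SI: a simple semigroup has no proper ideals, so its only ideal is the whole semigroup $\Sc(T,T^*)$, which by construction (Definition \ref{D6}) is closed under adjoints and is therefore selfadjoint. Thus every ideal --- there is only one --- is selfadjoint, so $\Sc(T,T^*)$ is an SI semigroup. This closes the cycle and proves the three conditions equivalent.

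I do not expect any genuine obstacle in this theorem itself; all of the analytic and combinatorial difficulty has already been front-loaded into Theorem \ref{first p-theorem} (the periodic-mean argument forcing $q=1$ from $q=q^s$ with $s\ge 2$) and into Proposition \ref{p-isometry} (the explicit word manipulations showing each principal ideal recaptures some ${T^*}^m$ and hence all of $\Sc(T,T^*)$). The only point demanding care here is confirming that the reductions to the all-nonzero-weights case are genuine equivalences for \emph{both} the SI property and simplicity, which was justified in the surrounding discussion; once that is granted, the theorem is merely the assembly of the implication cycle above.
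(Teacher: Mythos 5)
Your proposal is correct and matches the paper's own argument: the paper derives this theorem directly from Theorem \ref{first p-theorem} (giving (i)$\Rightarrow$(ii)) and Proposition \ref{p-isometry} together with the $T = 0_k \oplus T_1$ reduction (giving (ii)$\Rightarrow$(iii)), with (iii)$\Rightarrow$(i) being the trivial observation that simple semigroups are SI. Your assembly of the implication cycle is exactly the intended proof.
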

	As mentioned earlier after Example \ref{p-example}, we have obtained a characterization of SI semigroups $\Sc(T, T^*)$ generated by those weighted shifts with weight sequence $0_k\oplus \{\alpha_n\}$ where $\{|\alpha_n|\}$ is a $p$-periodic sequence of nonzero weights (and in particular, for those SI semigroups $\Sc(T, T^*)$ generated by a weighted shift $T$ with periodic nonzero weights). We next obtain a characterization of SI semigroups $\Sc(T, T^*)$ generated by those weighted shifts with weights $0_k\oplus \{\alpha_n\}$ where $\{|\alpha_n|\}$ is an eventually constant sequence of nonzero weights. (By an eventually constant weight sequence $\{\alpha_n\}$, we mean for some $N\geq1$ and $\alpha \in \mathbb{C}$ for which $\alpha_j = \alpha$ for all $j\geq N$.)  For that we need to prove simplicity of $\Sc(T, T^*)$ for a subclass of quasi-isometries in Proposition \ref{quasi-isometries}. Recall that in \cite[Remark 1.22(v)]{PW21} we proved that for $T$ an isometry, $\Sc(T, T^*)$ is always simple. Under the slightly weaker assumption that $T^*T = I$ on $\range T$ (equivalently, $(T^*T)T=T$), we prove next that $\Sc(T, T^*)$ is simple. This class of operators $T$ that satisfy $(T^*T)T=T$ belong to the class of quasi-isometries (i.e., ${T^*}^2T^2 = T^*T$) introduced by Patel \cite{Patel}. 	
	\begin{proposition}\label{quasi-isometries}
		For $T\in \mathcal{B}(\mathcal{H})$ where $T$ satisfies $(T^*T)T=T$, $\Sc(T,T^*)$ is simple. 
	\end{proposition}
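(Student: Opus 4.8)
The plan is to prove simplicity by showing that \emph{every} principal ideal of $\Sc(T,T^*)$ coincides with the whole semigroup, following the overall scheme of Proposition~\ref{p-isometry} but replacing the isometry cancellations by the cancellations forced by $(T^*T)T=T$. First I would record the algebraic consequences of the hypothesis, which is $T^*T^2=T$. Taking adjoints gives $T^{*2}T=T^*$, and a one-line induction then yields the cancellation relations $T^*T^n=T^{n-1}$ and $T^{*n}T=T^{*(n-1)}$ for all $n\ge2$, together with $T^{*n}T^n=T^*T=:P$ for all $n\ge1$, and $PT=T$ (with adjoint $T^*P=T^*$). Iterating these, for $m,n\ge1$ one obtains the clean trichotomy
\[
T^{*m}T^n=\begin{cases}T^{\,n-m}, & n>m,\\ P, & n=m,\\ T^{*(m-n)}, & m>n,\end{cases}
\]
where the strict inequalities are exactly what keep every intermediate $T$-exponent at least $2$, so each cancellation step is legitimate.

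The second step is the reduction: it suffices to prove $T\in(A)_{\Sc(T,T^*)}$ for every word $A$, because then $(A)_{\Sc(T,T^*)}\supseteq(T)_{\Sc(T,T^*)}$, and $(T)_{\Sc(T,T^*)}=\Sc(T,T^*)$. The latter holds since any word in $\Sc(T,T^*)$ either visibly contains a factor $T$, hence lies in $(T)_{\Sc(T,T^*)}$, or is a pure power $T^{*n}$, and $T^{*n}=T^{*(n+1)}T\in(T)_{\Sc(T,T^*)}$ by the relation $T^{*(n+1)}T=T^{*n}$. Once every principal ideal equals $\Sc(T,T^*)$, any nonempty ideal contains some $a$ and therefore contains $(a)_{\Sc(T,T^*)}=\Sc(T,T^*)$, giving simplicity.

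The third step treats the six forms of the semigroup list. Forms one and two, $T^n$ and $T^{*n}$, contain $T$ directly via $T^{*(n-1)}T^n=T$ and $T^{*n}T^{n+1}=T$. For forms four, five, and six I would argue as in the analogous step of Proposition~\ref{p-isometry}, now using the present cancellation relations: a suitable one- or two-sided multiplication by powers of $T$ and $T^*$ (using $T^{*m_{k+1}}T^{m_{k+1}+1}=T$ to turn a trailing $T^*$-block into a trailing $T$, and $T^{*(n_1+1)}T^{n_1}=T^*$ to turn a leading $T$-block into a leading $T^*$) places a word of the third form into the ideal. For the third form $A=\prod_{j=1}^{k}T^{*m_j}T^{n_j}$ the idea is to peel off the interior $T$-blocks from the left: left-multiplying by $T^{*n_1}$ collapses the first block, since $T^{*n_1}(T^{*m_1}T^{n_1})=T^{*(n_1+m_1)}T^{n_1}=T^{*m_1}$, which then merges with the next $T^*$-block. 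Iterating, each step multiplying on the left by a power of $T^*$ so that the result stays in $(A)_{\Sc(T,T^*)}$, reduces $A$ to a single block $T^{*M}T^{n_k}$ with $M=m_1+\cdots+m_k$; by the trichotomy this equals $T^{\,n_k-M}$, $T^{*(M-n_k)}$, or $P$, and each of these contains $T$, the last because $PT=T$.

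The step I expect to be the crux is the single-block case $m=n$, where $T^{*m}T^m$ collapses not to the identity but only to $P=T^*T$. Since $P$ is in general neither a projection nor invertible (e.g. for the weight sequence $(2,1,1,\dots)$ of Remark~\ref{alpha1}, $P=\diag(4,1,1,\dots)$), one cannot, as in the isometry and $T^p$-isometry cases, manufacture $I$ inside an ideal. The whole argument hinges on the observation that the hypothesis $(T^*T)T=T$ says precisely $PT=T$, which is exactly enough to recover $T$ from $P$ and thereby keep every principal ideal as large as $(T)_{\Sc(T,T^*)}=\Sc(T,T^*)$. A secondary point requiring care is the exponent bookkeeping in the cancellation relations, so that no step illegitimately cancels past a $T$-exponent of $1$.
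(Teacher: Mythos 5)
Your proposal is correct and follows essentially the same route as the paper's proof: derive the cancellation identities ${T^*}^{n}T^{n+1}=T$ and ${T^*}^{n+1}T^{n}=T^*$ by induction from $(T^*T)T=T$, reduce every word in the semigroup list to a canonical form, and collapse it by left-multiplying with suitable powers of $T^*$. The only cosmetic differences are that the paper collapses fourth-form words to a pure power ${T^*}^{r}$ (so it never meets your $m=n$ case where a block degenerates to $P=T^*T$, which you correctly dispatch with $PT=T$) and then places both $T$ and $T^*$ in the principal ideal rather than just $T$.
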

	\begin{proof}
		Since $(T^*T)T=T$, by a straightforward induction argument one obtains 
		for all $n \geq 1$,
		\begin{equation}\label{equation1}
		{T^*}^nT^{n+1}=T
		\end{equation}
		Hence also, for all $n \geq 1$,
		\begin{equation}\label{equation2}
		{T^*}^{n+1}T^n=T^*
		\end{equation}
		Recall the semigroup list \cite[Proposition 1.6]{PW21}:\\
		\noindent $\Sc(T,T^*) =  \{T^n, {T^*}^n,  \Pi_{j=1}^{k}{T^*}^{m_j}T^{n_j},
		(\Pi_{j=1}^{k}{T^*}^{m_j}T^{n_j}){T^*}^{m_{k+1}},\Pi_{j=1}^{k}T^{n_j}{T^*}^{m_j},$  $(\Pi_{j=1}^{k}T^{n_j}{T^*}^{m_j})T^{n_{k+1}}\}$ where $n \ge 1,\,  k\ge1,\, n_j, m_j \ge 1\, \text{ for }\, 1 \le j \le k \text{ and } n_{k+1},m_{k+1}\geq 1$. To prove $\Sc(T,T^*)$ is simple, it suffices to show that the principal ideal generated by each form in the semigroup list coincides with the entire semigroup $\Sc(T,T^*)$. Furthermore, it suffices to show that the principal ideals generated by all the fourth and sixth forms coincide with $\Sc(T,T^*)$ because each principal ideal generated by each of the other forms contains a fourth and a sixth form.
		
		Consider an operator $A$ in the fourth form. So $A=(\Pi_{j=1}^{k}{T^*}^{m_j}T^{n_j}){T^*}^{m_{k+1}}$ for some $m_j, n_j \geq 1$ and $m_{k+1} 
		\geq 1$. Let $s = \sum_{j=1}^{k} n_j$ and $r = \sum_{j=1}^{k+1} m_j$. Then,
		
		\begin{equation*}
		\begin{aligned}
		{T^*}^{s}A &= {T^*}^s({T^*}^{m_1}T^{n_1})(\Pi_{j=2}^{k}{T^*}^{m_j}T^{n_j}){T^*}^{m_{k+1}}\\
		&= {T^*}^{s+ m_1 - n_1 -1} ({T^*}^{n_1 +1}T^{n_1})(\Pi_{j=2}^{k}{T^*}^{m_j}T^{n_j}){T^*}^{m_{k+1}} \quad \text{(add and substract $n_1+1$ from the power $s$ of $T^*$)}\\ 
		&= {T^*}^{s+ m_1 - n_1}(\Pi_{j=2}^{k}{T^*}^{m_j}T^{n_j}){T^*}^{m_{k+1}} \qquad \qquad (\text{from Equation (\ref{equation2}) above ${T^*}^{n_1 +1}T^{n_1} = T^*$})\\
		&= {T^*}^{s+(m_1-n_1)+(m_2 -n_2 -1)}({T^*}^{n_2 +1}T^{n_2})(\Pi_{j=3}^{k}{T^*}^{m_j}T^{n_j}){T^*}^{m_{k+1}}\\
		&={T^*}^{s+(m_1-n_1)+(m_2-n_2)}(\Pi_{j=3}^{k}{T^*}^{m_j}T^{n_j}){T^*}^{m_{k+1}} \qquad (\text{again from Equation (\ref{equation2})})\\
		\qquad \vdots\\
		&= {T^*}^{\sum_{j=1}^{k} m_j}{T^*}^{m_{k+1}}\\
		&= {T^*}^r \qquad (\text{from Equation (\ref{equation1}) above})
		\end{aligned}
		\end{equation*}
		Since ${T^*}^{s+1}AT^r \in (A)_{\Sc(T, T^*)}$ and ${T^*}^{s+1}AT^r=T^*({T^*}^sA)T^r={T^*}^{r+1}T^r=T^*$ (from Equation \ref{equation2}), one obtains $T^* \in (A)_{\Sc(T, T^*)}$. Also note that $({T^*}^{s}A)T^{r+1} ={T^*}^rT^{r+1}= T$ (from Equation (\ref{equation1})), so $T\in (A)_{\Sc(T, T^*)}$. And since $T,T^*\in(A)_{\Sc(T, T^*)}$, $(A)_{\Sc(T, T^*)} = \Sc(T, T^*)$. 
		
		We next consider the sixth form. So $A= (\Pi_{j=1}^{k}T^{n_j}{T^*}^{m_j})T^{n_{k+1}}$ for some $n_j, m_j \geq 1$, $1\leq j\leq k$, and $n_{k+1} 
		\geq 1$. The operator ${T^*}^{n_1}A{T^*}^{n_{k+1}} \in (A)_{\Sc(T, T^*)}$. Note that ${T^*}^{n_1}A{T^*}^{n_{k+1}}$ is back in the fourth form. Hence $({T^*}^{n_1}A{T^*}^{n_{k+1}})_{\Sc(T, T^*)}=\Sc(T, T^*)$. But $({T^*}^{n_1}A{T^*}^{n_{k+1}})_{\Sc(T, T^*)}\subset (A)_{\Sc(T, T^*)}$ so $(A)_{\Sc(T, T^*)}=\Sc(T, T^*)$.
	\end{proof}
	
	Now we can give a characterization of SI semigroups $\Sc(T, T^*)$ generated by those weighted shifts with weights $0_k\oplus \{\alpha_n\}$ where $\{|\alpha_n|\}$ is an eventually constant sequence of nonzero weights.
	Early on we noticed that the weighted shift operator $T$ with weight sequence $(a, 1, 1, \dots)$ where $a\in \mathbb{C}$ are examples of quasi-isometries which further statisfy $(T^*T)T=T$. So we first studied the impact of the SI property for $\Sc(T, T^*)$ on this subclass of quasi-isometries but found the stronger condition of simplicity in Proposition \ref{quasi-isometries}.

	In Theorem \ref{THEOREM}, we provided a necessary condition on the weight sequence of $T$ for $\Sc(T, T^*)$ to be SI. And Example \ref{p-example} showed that that necessary condition is not sufficient.
	But in this rather restrictive class of weighted shifts, we obtain in the following corollary, a necessary and sufficient condition for $\Sc(T, T^*)$ to be SI, as mentioned in the first paragraph of this periodic subsection.
	
	\begin{corollary}\label{eventually-constant}
		Let $T$ be a weighted shift with weights $0_k\oplus \{\alpha_n\}$ where $\{|\alpha_n|\}$ is an eventually constant sequence of nonzero weights. Then $\Sc(T, T^*)$ is an SI semigroup if and only if $\{|\alpha_n|\}$ has the form  $(a,1, 1, \dots)$. Moreover in this situation, $\Sc(T, T^*)$ is simple.
	\end{corollary}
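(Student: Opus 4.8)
The plan is to prove both implications after a standard normalization, using the necessary reciprocal condition of Theorem~\ref{THEOREM} for the ``only if'' direction and the quasi-isometry simplicity result Proposition~\ref{quasi-isometries} for the ``if'' direction (the latter simultaneously yielding the final ``moreover'' clause). First I would reduce to strictly positive weights with no initial zeros: writing $T = 0_k \oplus T_1$ with $T_1$ the weighted shift carrying the nonzero weights $\{\alpha_n\}$, the SI property and simplicity of $\Sc(T,T^*)$ are equivalent to those of $\Sc(T_1,T_1^*)$ (exactly as already invoked in the proofs of Theorem~\ref{first p-theorem} and Proposition~\ref{p-isometry}); and since SI and simplicity are unitary invariants, by \cite[Problem 89]{Hal82} I may further replace $T_1$ by the positive weighted shift with weights $\{|\alpha_n|\}$. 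So throughout I work with positive weights $\{\alpha_n\}$ that are eventually equal to some constant $c>0$, say $\alpha_n=c$ for all $n\ge N$, and I must show SI is equivalent to $\{\alpha_n\}=(a,1,1,\dots)$.

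For necessity, assume $\Sc(T,T^*)$ is SI. The key is to apply Theorem~\ref{THEOREM} at cleverly chosen indices. I would first pin down the eventual constant: for any $i\ge N$, Theorem~\ref{THEOREM} expresses $1/\alpha_i$ as a product of $\alpha_j,\bar\alpha_j$ with indices $j\ge i\ge N$, each of modulus $c$; comparing moduli gives $1/c=c^{m}$ for some integer $m\ge 0$, which forces $c=1$. Hence $\alpha_n=1$ for all $n\ge N$, so the set $\{n\ge 2:\alpha_n\ne 1\}$ is now finite.

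If that set were nonempty, let $i_0$ be its largest element; then $\alpha_j=1$ for every $j>i_0$. Applying Theorem~\ref{THEOREM} at $i=i_0$ writes $1/\alpha_{i_0}$ as a product of $\alpha_j,\bar\alpha_j$ with $j\ge i_0$, in which the only factors of modulus $\ne 1$ are the copies of $\alpha_{i_0}$ (since $\alpha_{i_0}>0$, $\bar\alpha_{i_0}=\alpha_{i_0}$). Taking moduli collapses the condition to $1/\alpha_{i_0}=\alpha_{i_0}^{t}$ for some $t\ge 0$, i.e.\ $\alpha_{i_0}^{t+1}=1$, forcing $\alpha_{i_0}=1$, a contradiction. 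Thus $\alpha_n=1$ for all $n\ge 2$, and $\{|\alpha_n|\}$ has the asserted form $(a,1,1,\dots)$ with $a=|\alpha_1|$. Conversely, after the same reduction $T$ is the positive weighted shift with weights $(a,1,1,\dots)$, and a direct computation using $|\alpha_{n+1}|=1$ gives $(T^*T)Te_n=\alpha_n|\alpha_{n+1}|^2e_{n+1}=\alpha_n e_{n+1}=Te_n$ for every $n\ge 1$, so $(T^*T)T=T$; Proposition~\ref{quasi-isometries} then shows $\Sc(T,T^*)$ is simple, hence SI, which is also the ``moreover'' assertion.

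The main obstacle is the necessity direction, precisely because Theorem~\ref{THEOREM} supplies only a one-sided modulus constraint, so the entire difficulty lies in selecting the index at which to invoke it. The crux is ordering the two applications correctly: one must first force the eventual constant to equal $1$ (otherwise the ``largest bad index'' is not even well defined, the set $\{n:\alpha_n\ne1\}$ being infinite), and only after that can one isolate the largest index $i_0$ with $\alpha_{i_0}\ne 1$ so that \emph{every} strictly later weight has modulus $1$ and the reciprocal condition degenerates to $\alpha_{i_0}^{t+1}=1$.
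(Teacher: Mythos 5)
Your proof is correct and follows essentially the same route as the paper: the same reduction to strictly positive weights via $T=0_k\oplus T_1$ and unitary equivalence, the same two-stage application of Theorem~\ref{THEOREM} (first forcing the eventual constant to be $1$, then killing any remaining index with weight $\ne 1$ — your ``largest bad index'' $i_0$ plays the role of the paper's $m-1$), and the same appeal to $(T^*T)T=T$ and Proposition~\ref{quasi-isometries} for sufficiency and simplicity. The only cosmetic point is that Theorem~\ref{THEOREM} is stated for $i\ge 2$, so the first application should be at $i=\max\{N,2\}$ rather than ``any $i\ge N$,'' exactly as the paper's choice $r=\max\{m,2\}$ does.
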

	\begin{proof}
		Similar to the discussion in the beginning of the proof of Theorem~\ref{first p-theorem}, we can write $T=0_k \oplus T_1$ where $T_1$ is a weighted shift with nonzero weights $\{\alpha_n\}$. Then the SI property of $\Sc(T, T^*)$ is equivalent to the SI property of $\Sc(T_1,T^*_1)$. Furthermore, $T_1$ is unitarily equivalent to a weighted shift $S$ with strictly positive weights $\{|\alpha_n|\}$. And SI being a unitarily invariant property, so $\Sc(T_1,T^*_1)$ is SI if and only if $\Sc(S,S^*)$ is SI. Therefore without loss of generality we can assume that $T$ has strictly positive eventually constant weight sequence $\{\alpha_n\}$. 
		
		Given that $T$ has an eventually constant weight sequence $\{\alpha_n\}$, then there is an $\alpha >0$ (because all $\alpha_n$ are strictly positive) and a least $m\ge 1$ for which $\alpha_n=\alpha$ for $n \ge m$ (and $\alpha_{m-1}\ne \alpha$ when $m\ge 2$) . Suppose $\Sc(T,T^*)$ is an SI semigroup. First we will show that $\alpha=1$ irrespective of the value of $m$ and then we will show that $m\le 2$. Both these together will yield the required form for the weight sequence. Let $r:=\max\{m,2\}$, then by Theorem~\ref{THEOREM}, $\alpha_r\beta_r=1$, where $\beta_r$ is a product of powers of $\alpha_i$'s with the index $i\ge r \ge 2.$ Also since $r \ge m,~ \alpha_i=\alpha$ for all $i\ge r$, therefore $\alpha_r \beta_r=1$ implies that $\alpha^k=1$ for some $k \ge 2$, which further implies that $\alpha=1$. Hence we obtain $\alpha_n =1$ for all $n \ge m.$ Next we prove that $m \le 2$. Suppose not. Then $m \ge 3$ and so $\alpha_{m-1}\ne 1$. The latter is not possible because, again by Theorem~\ref{THEOREM}, $1/{\alpha_{m-1}}$ is a product of some $\alpha_i$'s with $i\ge m-1\ge 2$. Therefore $1/{\alpha_{m-1}}=\alpha_{m-1}^s$ for some $s\ge 1$, so $\alpha_{m-1}=1$ against $\alpha_{m-1}\ne1$.
		
		This completes the proof for one direction of the result, that is, $\Sc(T,T^*)$ SI implies that the weight sequence must be of the form  $(a,1, 1, \dots)$. And when the weight sequence has this form one can easily verify that $T$ satisfies $(T^*T)T=T$. Then by Proposition \ref{quasi-isometries} $\Sc(T, T^*)$ is simple and hence SI. 
	\end{proof}

Finally in this periodic subsection, we investigate the larger class of weight sequences with their absolute values almost periodic (see Definition \ref{definition almost periodic} below), which properly contains the classes of weight sequences we considered in Theorem \ref{first p-theorem} and Corollary \ref{eventually-constant} and also contains all the eventually periodic weight sequences. For the weighted shifts $T$ with weight sequence in this class, we generalize Theorem \ref{first p-theorem} by obtaining a necessary condition in Theorem \ref{almost periodic theorem} below for the SI property of the semigroup $\Sc(T, T^*)$. We first need the following definition and observations:

We define the complex analogue of \cite[Definition]{Ridge} as follows:
\begin{definition}\label{definition almost periodic}
	A sequence $\{\alpha_n\}$ is \textit{almost $p$-periodic} if there is a periodic sequence $\{c_n\}$ with period $p$ for which $\lim_{n}(\alpha_n-c_n)=0$. If $\{c_n\}$ has period $p$, as in \cite[Definition]{Ridge}, we  define the \textit{periodic mean} to be $q:=|c_1c_2\cdots c_p|^{1/p}$.
\end{definition}
That is, we define the periodic mean of an almost periodic sequence $\{\alpha_n\}$ to be the periodic mean of its approximating sequence $\{c_n\}$. And it is easy to show that this definition is well-defined, that is, if $\{c'_n\}$ is another approximating periodic sequence, then clearly $\{c_n\}=\{c'_n\}$.

\begin{remark}\label{remark almost p}
	Observe that almost periodic sequences are automatically bounded and their periodic means are independent of the choice of period $p$ (see paragraph after Definition \ref{first p-mean definition}). Also, clearly the product of any two almost $p$-periodic sequences is almost $p$-periodic. Indeed, if $\{\alpha_n\}$ and $\{\beta_n\}$ are any two almost $p$-periodic sequences with their respective approximating sequences $\{c_n\}$ and $\{b_n\}$, then the product sequence $\{\alpha_n\beta_n\}$ is approximated by the sequence $\{c_nb_n\}$. Therefore, if $\{\alpha_n\}$ and $\{\beta_n\}$ have their respective periodic means $q_1$ and $q_2$, then $\{\alpha_n\beta_n\}$ has periodic mean $q_1q_2$ (because the periodic mean of the approximating sequence $\{c_nb_n\}$ is $q_1q_2$, see discussion prior to Corollary \ref{periodic mean of products}).
	
	Once $\{\alpha_n\}$ is almost $p$-periodic with periodic mean $q$, so also are its tail sequences ($\{\alpha_{n+l}\}_{l\ge 1}$) and the sequences $0_k\oplus \{\alpha_n\}$ almost $p$-periodic with periodic mean $q$.
\end{remark}

%Observe that almost periodic sequences are automatically bounded and their periodic means are independent of the choice of period $p$ (see paragraph after Definition \ref{first p-mean definition}). Also, the product of any two almost $p$-periodic sequences is almost $p$-periodic. Indeed, let $\{\alpha_n\}$ and $\{\beta_n\}$ be almost $p$-periodic sequences, then there exist $p$-periodic sequences $\{c_n\}$ and $\{b_n\}$ for which $\lim_{n}(\alpha_n-c_n)=0$ and $\lim_{n}(\beta_n-b_n)=0$. Then for each $n\ge 1$

%$$
%\begin{aligned}
%	|\alpha_n\beta_n-c_nb_n|=|(\alpha_n\beta_n-c_n\beta_n)+(c_n\beta_n-a_nb_n)|&\le |\alpha_n-c_n||\beta_n|+|c_n||\beta_n-b_n|\\
%	&\le M_{\beta}|\alpha_n-c_n|+ M_c|\beta_n-b_n|\longrightarrow 0
%\end{aligned}
%$$
%as $n \longrightarrow \infty$, where $M_{\beta}$ and $M_c$ are bounds for the sequences $\{\beta_n\}$ and $\{c_n\}$ respectively. Note that the approximating sequence $\{c_nb_n\}$ is $p$-periodic because of $p$-periodicity of $\{c_n\}$ and $\{b_n\}$, so $\{\alpha_n\beta_n\}$ is almost $p$-periodic. Furthermore, if $\{\alpha_n\}$ and $\{\beta_n\}$ have their respective periodic means $q_1$ and $q_2$, then $\{\alpha_n\beta_n\}$ has periodic mean $q_1q_2$ (because periodic mean of the approximating sequence $\{c_nb_n\}$ is $q_1q_2$, see discussion prior to Corollary \ref{periodic mean of products}).  

From this Remark \ref{remark almost p} and Proposition \ref{multiplication} we obtain:
\begin{corollary}\label{almost periodic corollary}
	For any $D^{(\alpha)}_{k},D^{(\beta)}_{l}\in \mathfrak{D}$ with almost $p$-periodic sequences $\alpha, \beta$ with periodic means $q_1$, $q_2$, respectively, the product diagonal $D^{(\alpha)}_{k}D^{(\beta)}_{l} = D^{(\gamma)}_{k+l}$ has sequence $\gamma$ almost $p$-periodic with periodic mean $q_1q_2$.  
\end{corollary}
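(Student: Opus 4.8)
The plan is to combine the explicit product formulas already recorded in Proposition \ref{multiplication} with the closure properties of almost $p$-periodic sequences collected in Remark \ref{remark almost p}; essentially no new idea is needed beyond correctly matching cases. First I would invoke Proposition \ref{multiplication} to note that the product $D^{(\alpha)}_{k}D^{(\beta)}_{l}$ is a single diagonal $D^{(\gamma)}_{k+l}$, so that only the assertion about the sequence $\gamma$ remains to be proved.

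I would then run through the four cases determined by the signs of $k$ and $l$, reading off $\gamma$ from Equations (\ref{LL,UU,UL})--(\ref{LU}). In each case $\gamma$ is visibly a termwise product of a shift (tail) of $\beta$ with a shift (tail) of $\alpha$: for example $\gamma=\{\beta_i\alpha_{i+l}\}$ when $k,l\ge 0$, $\gamma=\{\beta_{i+l}\alpha_{i+l+k}\}$ when $k,l<0$, and $\gamma=\{\beta_i\alpha_{i+l+k}\}$ when $k<0,\,l\ge 0$. By the last paragraph of Remark \ref{remark almost p} every such shift of an almost $p$-periodic sequence is again almost $p$-periodic with the \emph{same} periodic mean, namely $q_2$ for the shift of $\beta$ and $q_1$ for the shift of $\alpha$. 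The multiplicativity statement of Remark \ref{remark almost p} then yields immediately that $\gamma$ is almost $p$-periodic with periodic mean $q_1q_2$.

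The only case needing a word of extra care is $k\ge 0,\,l<0$, where Equation (\ref{LU}) introduces leading zeros, $\gamma=0_{k\wedge -l}\oplus\{\beta_{i+l}\alpha_{i+l}\}_{i>-l}$. Here I would first handle the nonzero tail $\{\beta_{i+l}\alpha_{i+l}\}$ exactly as above (a product of shifts of $\beta$ and $\alpha$, hence almost $p$-periodic with periodic mean $q_1q_2$), and then appeal once more to Remark \ref{remark almost p}, which records that prepending finitely many zeros ($0_k\oplus\cdots$) preserves both almost $p$-periodicity and the periodic mean. I do not anticipate any real obstacle: all the needed facts are already isolated in Remark \ref{remark almost p}, so the argument is just a bookkeeping match between the four product formulas and the corresponding tail/shift. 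The one point I would state explicitly is that the periodic mean is unchanged under shifting and zero-padding, which holds because a product of $p$ consecutive terms of a $p$-periodic sequence is independent of the starting index.
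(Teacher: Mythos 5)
Your proposal is correct and follows exactly the route the paper intends: the corollary is stated as an immediate consequence of Proposition \ref{multiplication} and Remark \ref{remark almost p}, and your case-by-case matching of the product formulas (\ref{LL,UU,UL})--(\ref{LU}) with the tail, product, and zero-padding closure properties in that remark is precisely the omitted argument. No gaps.
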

Now we give a necessary condition in terms of the periodic mean for $\Sc(T,T^*)$ to be an SI semigroup when generated by a weighted shift with  absolute values of its weight sequence almost periodic. 
\begin{theorem}\label{almost periodic theorem}
	Let $T$ be a nonzero weighted shift with weights $\{\alpha_n\}$ where $\{|\alpha_n|\}$ is an almost periodic sequence with periodic mean $q$. If $\Sc(T,T^*)$ is an SI semigroup, then either $q=1$ or $q=0$.
\end{theorem}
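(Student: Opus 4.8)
The plan is to follow closely the argument of Theorem~\ref{first p-theorem}, replacing ``$p$-periodic'' by ``almost $p$-periodic'' throughout and allowing the extra possibility $q=0$. First I would reduce to the case of nonnegative weights: any weighted shift $T$ with weights $\{\alpha_n\}$ is unitarily equivalent to the weighted shift $S$ with weights $\{|\alpha_n|\}$ (\cite[Problem 89]{Hal82}), and since the SI property is a unitary invariant, $\Sc(T,T^*)$ is SI if and only if $\Sc(S,S^*)$ is SI, while the periodic mean of $\{|\alpha_n|\}$ is unchanged. Hence without loss of generality I may assume the weights of $T$ are the nonnegative reals $\{|\alpha_n|\}$, so that the weight sequence of $T$ is itself almost $p$-periodic (not merely its absolute values) with periodic mean $q$; the same holds for $T^*$, whose weights are these same nonnegative reals.

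Next, assuming $\Sc(T,T^*)$ is SI, the principal ideal $(T)_{\Sc(T,T^*)}$ is selfadjoint, so $T^*=XTY$ for some $X,Y\in\Sc(T,T^*)\cup\{I\}$. Since $T$ is a nonzero weighted shift it is not selfadjoint (a strictly lower diagonal cannot equal a strictly upper one), so $X,Y$ are not both $I$ and the word $XTY$ has degree $s\ge 2$.

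The crux is an almost-periodic analogue of Proposition~\ref{proposition3.9}: every $A\in\Sc(T,T^*)$ of degree $s$ is a single diagonal whose weight sequence is almost $p$-periodic with periodic mean $q^{s}$. I would prove this by induction on $s$ using Corollary~\ref{almost periodic corollary}: $A$ is a product of $s$ diagonals, each equal to $T$ or $T^*$ and hence having weight sequence almost $p$-periodic with periodic mean $q$, and Corollary~\ref{almost periodic corollary} shows each successive product remains a single almost $p$-periodic diagonal and multiplies the periodic means. I expect the main care here to be the bookkeeping that the periodic mean really is multiplicative across all four diagonal-product cases of Proposition~\ref{multiplication}, in particular the mixed lower-times-upper case (Equation~(\ref{LU})), where a block of initial zeros is prepended; this is harmless because by Remark~\ref{remark almost p} prepending finitely many zeros and passing to tail sequences leaves both almost $p$-periodicity and the periodic mean intact, which is exactly what Corollary~\ref{almost periodic corollary} records.

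Finally, applying this to $T^*=XTY$: the left side is a single diagonal of degree $1$ with periodic mean $q$, while the right side has degree $s\ge 2$ and periodic mean $q^{s}$. As equal operators they have identical weight sequences, hence identical periodic means, giving $q=q^{s}$. Since $q\ge 0$ is real and $s\ge 2$, the relation $q(q^{s-1}-1)=0$ forces $q=0$ or $q=1$, which is the desired conclusion. The only genuinely new feature compared with Theorem~\ref{first p-theorem} is that the hypothesis no longer excludes $q=0$, so both roots of $q=q^{s}$ survive in the conclusion.
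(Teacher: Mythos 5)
Your proof is correct and is precisely the argument the paper intends: the paper omits the proof of Theorem~\ref{almost periodic theorem}, remarking only that it is ``similar to the proof of Theorem \ref{first p-theorem} except that $q=0$ could be a possibility,'' and your write-up is a faithful execution of that plan --- reduction to nonnegative weights via \cite[Problem 89]{Hal82}, the relation $T^*=XTY$ with degree $s\ge 2$, the almost-periodic analogue of Proposition~\ref{proposition3.9} obtained by induction from Corollary~\ref{almost periodic corollary}, and the resulting equation $q=q^{s}$. The only point worth making explicit is the well-definedness of the periodic mean (uniqueness of the approximating periodic sequence), which the paper establishes just after Definition~\ref{definition almost periodic} and which is what licenses equating the periodic means of the two sides of $T^*=XTY$.
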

Note that the proof of this Theorem \ref{almost periodic theorem} is similar to the proof of Theorem \ref{first p-theorem} except that $q=0$ could be a possibility.

The converse of above Theorem \ref{almost periodic theorem} does not hold, in general, see Example \ref{almost first example} for $q=1$ and Example \ref{almost second example} for $q=0$ below:
\begin{example}\label{almost first example}
	The weighted shifts $T$ with weight sequences $\{1-1/n\}$ and $\{1+1/n\}$ are almost periodic where the approximating periodic sequence for both is the constant sequence $1$ with periodic mean $1$. Yet we discussed earlier as well in the paragraph before Example \ref{p-example} that for both these weight sequences, the necessary reciprocal condition in Theorem \ref{THEOREM} fails, so $\Sc(T,T^*)$ are not SI.
\end{example}
\begin{example}\label{almost second example}
	The weighted shift $T$ with weight sequence $\{\alpha_n\}$ where $$\alpha_n:=\begin{cases}
	1/n &\quad \text{ for } n~\text{odd}\\
	1 & \quad \text{ for } n~\text{even}
	\end{cases}$$ is almost periodic with the approximating periodic sequence $(0,1,0,1,\ldots)$, which has periodic mean $0$. Again clearly for $\{\alpha_n\}$ the necessary reciprocal condition in  Theorem \ref{THEOREM} fails, so $\Sc(T,T^*)$ is not SI.
\end{example}
\begin{remark}\label{almost p-spectrum}
	(Impact of SI on the spectrum): Note that for the SI semigroups $\Sc(T,T^*)$ generated by weighted shifts $T$ with their absolute value weight sequences almost periodic, the spectrum of $T$ is contained in $\overline{\mathbb{D}}$, indeed even more, $\sigma(T)=\{0\}$ or $\sigma(T)=\overline{\mathbb{D}}$. That is, for $T$  a weighted shift with weights $\{\alpha_n\}$ where $\{|\alpha_n|\}$ is an almost periodic sequence with periodic mean $q$, it follows from \cite[Theorem 2]{Ridge} that $\sigma(T)=\{z\in \mathbb{C}|~ |z|\le q\}$. Also since $\Sc(T,T^*)$ is SI, by Theorem \ref{almost periodic theorem}, $q=0$ or $q=1$.  Hence, $\sigma(T)=\{0\}$ or $\sigma(T)=\overline{\mathbb{D}}$.
	
	In particular, for the subclasses of weighted shift operators $T$ in Theorem \ref{periodic SI characterization} and Corollary \ref{eventually-constant} (with the weight sequences $0_k\oplus \{\alpha_n\}$ where $\{|\alpha_n|\}$ is a $p$-periodic sequence of nonzero numbers and $\{|\alpha_n|\}$ is an eventually constant sequence of nonzero numbers), under the assumption of the SI property for $\Sc(T,T^*)$, we obtain $\sigma(T)=\overline{\mathbb{D}}$. Because for both these subclasses of weighted shift operators $T$, the SI property for $\Sc(T,T^*)$ implies that the periodic mean $q$ of the absolute value weight sequence is $1$ (which follows from Theorem \ref{first p-theorem} and Corollary \ref{eventually-constant}).
\end{remark}

\section{The impact of the SI property of $\Sc(T,T^*)$ on the spectral density for $T$: hyponormal, essentially normal, and weighted shifts} 

Earlier we obtained a complete characterization of SI semigroups $\Sc(T,T^*)$ for normal operators $T$ in \cite[Remark 1.13 and Theorem 2.1]{PW21}. In this section, we consider the broader class of hyponormal operators for our SI investigation of $\Sc(T,T^*)$ (note the proper inclusions: normal operators $\subset$ subnormal operators $\subset$ hyponormal operators \cite[Remark]{Stamp65}); and generalize our study of SI semigroups to unital $C^*$-algebras in Subsection 4.1. As a consequence, under the SI assumption, we obtain nontrivial projections in singly generated unital $C^*$-algebra generated by a non-invertible normal element (Corollary \ref{projection}).
%We determine a necessary condition for $\Sc(T,T^*)$ to be an SI semigroup for a hyponormal operator $T$ in Lemma \ref{lemma6.1}. 

We also found deep connections between the study of the SI property of $\Sc(T, T^*)$ and the spectral density of $T$ for hyponormal operators and essentially normal operators (i.e., $\pi(T)$ normal in the Calkin algebra, equivalently, with compact self commutator $T^*T-TT^*$) (Proposition \ref{hyponormal}, Corollary \ref{corollary6.4}, and Theorem \ref{fullspec}).

%{\color{red}{Throughout this paper, we consider only non-selfadjoint operators.}}

%{\color{blue}{Theorem: For any normal element in a unital $*$-algebra, SI implies that spectrum is a subset of unit circle and zero.\\
%Corollary: For an essentially normal operator, the essential spectrum has zero measure.\\
%corollary: For a subnormal operator, essentially normal iff area of essential spectrum is zero.\\

The most prominent example of a hyponormal operator is the infinite unilateral shift. Note that the approximate point spectrum of the unilateral shift is the unit circle $S^1$ (see \cite[Solution 82]{Hal82}). Interestingly, it turns out that for any hyponormal operator $T$, under the assumption of the SI property of $\Sc(T, T^*)$, the approximate point spectrum of $T$ is a subset of $S^1 \cup \{0\}$ which is proved in the following lemma. We use $\sigma_{ap}(T)$ to denote the approximate point spectrum of $T$.

\begin{lemma}\label{lemma6.1}
	For $T\in B(\mathcal{H})$ a nonselfadjoint hyponormal operator, if $\Sc(T,T^*)$ is an SI semigroup, then $${\{0\} \ne~} \sigma_{ap}(T)\subset S^1\cup\{0\}.$$
\end{lemma}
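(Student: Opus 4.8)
I want to show that for a nonselfadjoint hyponormal $T$ with $\Sc(T,T^*)$ an SI semigroup, every point of $\sigma_{ap}(T)$ lies on $S^1 \cup \{0\}$ (and that the spectrum is not just $\{0\}$). The starting point is the SI hypothesis applied to the principal ideal $(T)_{\Sc(T,T^*)}$: selfadjointness forces $T^* = XTY$ for some $X, Y \in \Sc(T,T^*) \cup \{I\}$, not both $I$ since $T$ is nonselfadjoint. My first step is to turn this algebraic identity into spectral information via a character (multiplicative linear functional) on the unital $C^*$-algebra $C^*(T)$. The key is that for a \emph{hyponormal} operator, points $\lambda \in \sigma_{ap}(T)$ correspond to characters $\tau$ of $C^*(T)$ with $\tau(T) = \lambda$; this is exactly the content invoked from Bunce's work \cite{JB70} cited in the introduction. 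For a hyponormal $T$, an approximate eigenvector $x_n$ with $(T-\lambda)x_n \to 0$ satisfies $(T^*-\bar\lambda)x_n \to 0$ as well (this is the standard hyponormal estimate $\|(T-\lambda)^*x\| \le \|(T-\lambda)x\|$), so $\lambda \mapsto \tau(\cdot)$ defined by evaluating limits of $\langle \,\cdot\, x_n, x_n\rangle$ gives a well-defined character on all of $C^*(T)$ with $\tau(T)=\lambda$, $\tau(T^*)=\bar\lambda$.

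**Carrying out the core step.** Given such a character $\tau$ associated to $\lambda \in \sigma_{ap}(T)$, I apply $\tau$ to the word equation $T^* = XTY$. Since $\tau$ is multiplicative and $*$-preserving, every word $W \in \Sc(T,T^*)$ maps to a product of copies of $\tau(T)=\lambda$ and $\tau(T^*)=\bar\lambda$, i.e. $\tau(W) = \lambda^{a}\bar\lambda^{\,b}$ where $a,b$ count the occurrences of $T$ and $T^*$ in $W$. Applying $\tau$ to $T^* = XTY$ therefore yields a relation of the shape
\begin{equation*}
\bar\lambda = \lambda^{a}\bar\lambda^{\,b}
\end{equation*}
for some nonnegative integers $a,b$ with $a+b = \deg(X)+\deg(Y)+1 \ge 2$ (the strict inequality because $X,Y$ are not both the identity). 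Taking absolute values gives $|\lambda| = |\lambda|^{\,a+b}$, so $|\lambda|\bigl(1 - |\lambda|^{\,a+b-1}\bigr) = 0$ with exponent $a+b-1 \ge 1$; hence $|\lambda| = 0$ or $|\lambda| = 1$, which is precisely $\lambda \in S^1 \cup \{0\}$. One subtlety: the exponents $a,b$ could a priori depend on $\lambda$, but since $X, Y$ are fixed words chosen once from the single selfadjointness relation $T^* = XTY$, the total degree $a+b$ is a single fixed integer $\ge 2$, uniform over all $\lambda$; this is what makes the argument go through simultaneously for every approximate point.

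**The remaining piece and the main obstacle.** It remains to verify $\sigma_{ap}(T) \ne \{0\}$, which rules out the degenerate possibility that the only approximate point is $0$. Since $T$ is nonselfadjoint (in particular nonzero), its spectrum $\sigma(T)$ is nonempty, and the boundary $\partial\sigma(T)$ is always contained in $\sigma_{ap}(T)$; if $\sigma_{ap}(T) = \{0\}$ then $\sigma(T) = \{0\}$, forcing $T$ to be quasinilpotent, and a quasinilpotent hyponormal operator is zero by Stampfli-type spectral results (a hyponormal operator with $\sigma(T)$ a single point is normal, hence scalar, hence $0$), contradicting nonselfadjointness. I expect the genuinely delicate step to be establishing the character/spectral correspondence for hyponormals cleanly — specifically, confirming that for hyponormal $T$ the approximate point spectrum really is carried by characters of $C^*(T)$ (one must check that the functional defined from an approximate eigensequence is multiplicative on \emph{all} products, not merely on $T$ and $T^*$ separately), since this is where hyponormality does the essential work and is the hinge on which the whole lemma turns. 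The purely algebraic extraction of $|\lambda| \in \{0,1\}$ from the word equation is then routine once that correspondence is in hand.
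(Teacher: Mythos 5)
Your proposal is correct and follows essentially the same route as the paper's proof: both extract $T^* = XTY$ from selfadjointness of the principal ideal $(T)_{\Sc(T,T^*)}$, invoke Bunce's result that each nonzero $\lambda \in \sigma_{ap}(T)$ of a hyponormal operator is realized by a character on $C^*(T)$, apply that character to obtain $|\lambda| = |\lambda|^{n+m}$ with $n+m \ge 2$, and settle $\sigma_{ap}(T) \ne \{0\}$ via the hyponormal identity $r(T) = \|T\| > 0$ together with $\partial(\sigma(T)) \subset \sigma_{ap}(T)$. The only cosmetic difference is that the paper phrases the nonvanishing step by locating a nonzero boundary point of modulus $r(T)$ rather than via the quasinilpotent-implies-zero formulation, and it delegates the character correspondence entirely to \cite{JB70} rather than sketching its construction.
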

\begin{proof}
Since $T$ is a nonselfadjoint operator, $T \neq 0$. We first claim $\sigma_{ap}(T)$ contains a nonzero value.
For $T$ a nonzero hyponormal operator, by \cite[Theorem 1]{Stamp62}, its spectral radius $r(T) = ||T|| > 0$. Therefore because the spectrum $\sigma(T)$ is compact and hence closed, it contains a nonzero boundary point with modulus the spectral radius. And since its boundary
$\partial(\sigma(T)) \subset \sigma_{ap}(T)$ \cite[Problem 78]{Hal82}, one has the claim.

Since $\Sc(T,T^*)$ is an SI semigroup, the principal ideal $(T)_{\Sc(T,T^*)}$ is a selfadjoint ideal. Therefore, $T^*=XTY$ for some $X,Y\in \Sc(T,T^*)\cup\{I\}$, where $X,Y$ cannot both be the identity operator, otherwise $T$ would be selfadjoint. Since $T$ is hyponormal, by \cite[Corollary 10]{JB70}, for each $0 \neq \lambda \in \sigma_{ap}(T)$, there exists a character $\phi$ on the unital $C^*$ algebra generated by $T$,  $C^*(T)$, such that $\phi(T) =\lambda$. Since characters are multiplicative $*$-preserving linear functionals, and $X,Y$ when not the identity are words in $T, T^*$, by applying $\phi$ to $T^*=XTY$, we obtain 
$$\overline{\phi(T)} = \phi(X)\phi(T)\phi(Y) = \phi(T)^n\overline{\phi(T)^m},$$ 
where $n \ge 1$, but additionally $n,m$ must satisfy, for some $m=0, n\geq 2$ or $n\geq 1, m\geq 1$. In the former case, for $m=0$, one has $n\geq 2$ because both $X$ and $Y$ cannot be the identity operator and $T$ is not selfadjoint. And in the latter case, if $n=1$, $m\ne 0$ again since $T$ is nonselfadjoint. 
Therefore, since $\phi(T) =\lambda \neq 0$, taking absolute values, one obtains
$$|\lambda|=|\lambda|^{n+m}$$ where $n+m\geq 2$. This implies that $0 \neq \lambda\in S^1$. Hence $\sigma_{ap}(T) \subset S^1\cup\{0\}$.
\end{proof}

\begin{remark}\label{R6.1}
(i). We note that under the hypothesis of Lemma \ref{lemma6.1}, $\sigma(T) \subset \overline{\mathbb D}$. Indeed, combining the general fact that $\partial(\sigma(T)) \subset \sigma_{ap}(T)$ \cite[Problem 78]{Hal82} and the inclusion in Lemma  \ref{lemma6.1}, one has $\sigma_{ap}(T) \subset S^1 \cup \{0\}$ and hence $\partial(\sigma(T))\subset S^1 \cup \{0\}$. Moreover, $\partial(\sigma(T))\neq \{0\}$, otherwise $r(T) = ||T||=0$ implying $T=0$ which contradicts $T\neq 0$. Therefore, $\emptyset \ne \partial(\sigma(T)) \setminus \{0\} \subset S^1$. Hence, $r(T)=1$ (so $||T|| =1$) and because the spectral radius is 1, one concludes that $\sigma(T) \subset \overline{\mathbb D}$.  \\
(ii) Lemma \ref{lemma6.1} implies that the planar area measure of $\sigma_{ap}(T) = 0$, if $\Sc(T, T^*)$ is SI.\\
(iii) The converse of Lemma \ref{lemma6.1} does not hold, in general. For example, consider the 
nonzero hyponormal weighted shift $T$ with weights 
$\{\alpha_n\}=\{1-1/(n+1)\}$. 
Since $0 \neq \alpha_n\to 1$, it follows that $T$ is injective allowing us to apply \cite[Corollary 1]{Ridge} to conclude that  $\sigma(T)=\overline{ \mathbb{D}}$  and $\sigma_{ap}(T) \subset S^1$. 
And as proved in the first paragraph of the proof of Lemma \ref{lemma6.1}, nonzero hyponormal operators have nonzero approximate point spectra, so we have $\{0\} \ne \sigma_{ap}(T) \subset S^1 \cup \{0\}$. 
However $\Sc(T,T^*)$ is a non-SI semigroup. Indeed, if $\Sc(T, T^*)$ were an SI semigroup, then $\alpha_2 = 2/3$ must have  its inverse as the product of certain scalars $\alpha_j$'s (possibly including repetition) where $j\geq 2$ by Theorem \ref{THEOREM}. But the product of any powers of $\alpha_j$'s for $j \geq 2$ is a number strictly less than $1$ so it cannot be the inverse of $2/3$. Therefore $\Sc(T,T^*)$ is not an SI semigroup.
\end{remark}

Note that every normal operator is a hyponormal operator so using Lemma \ref{lemma6.1}, we provide an alternate short proof of \cite[Theorem 2.1]{PW21} in the corollary below.

\begin{corollary}\label{normal}
\cite[Theorem 2.1]{PW21} If $T$ is a nonselfadjoint normal operator and $\Sc(T, T^*)$ is an SI semigroup, then $T$ is unitarily equivalent to $U \oplus 0$ for some unitary operator $U$ (the zero summand may be absent). 
\end{corollary}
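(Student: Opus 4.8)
The plan is to deduce this directly from Lemma \ref{lemma6.1} together with the spectral theorem, since a normal operator is in particular hyponormal. First I would note that because $T$ is normal it is hyponormal, and by hypothesis it is nonselfadjoint and $\Sc(T,T^*)$ is SI, so Lemma \ref{lemma6.1} applies and gives $\sigma_{ap}(T)\subset S^1\cup\{0\}$. The key reduction is then to upgrade this from the approximate point spectrum to the full spectrum: for a normal operator one has $\sigma_{ap}(T)=\sigma(T)$ (every spectral value of a normal operator is an approximate eigenvalue, as one sees by choosing unit vectors in the ranges of the spectral projections $E(B(\lambda,\varepsilon))$, which are nonzero for $\lambda\in\sigma(T)$). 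Hence $\sigma(T)\subset S^1\cup\{0\}$.

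Next I would run the spectral decomposition. The two sets $S^1$ and $\{0\}$ are disjoint and closed, so $\{0\}$ is a relatively clopen subset of $\sigma(T)$; letting $E$ denote the spectral measure of $T$, the projections $P:=E(\{0\})$ and $Q:=E(S^1)$ are reducing, orthogonal, and satisfy $P+Q=I$. On $\range Q$ the operator $T$ restricts to a normal operator with spectrum contained in $S^1$, hence to a unitary operator $U$; on $\range P$ it restricts to a normal operator with spectrum $\{0\}$, hence (norm equals spectral radius) to the zero operator. Thus, with respect to $\mathcal H=\range Q\oplus\range P$, one has $T=U\oplus 0$, which is the asserted unitary equivalence.

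Finally I would check the qualifiers. Because $T$ is nonselfadjoint, $T\neq 0$, so its spectral radius $\|T\|>0$, forcing a spectral value of modulus $\|T\|$ lying in $S^1$; this guarantees $Q\neq 0$, i.e.\ the unitary summand $U$ is genuinely present (in fact $\sigma(T)\neq\{0\}$). The zero summand is present exactly when $0\in\sigma(T)$ (equivalently $P\neq 0$, since $\{0\}$ clopen in the support of $E$ makes $E(\{0\})\neq 0$ precisely when $0\in\sigma(T)$), and is absent otherwise, matching the parenthetical remark in the statement. The only slightly delicate point is the passage $\sigma_{ap}(T)=\sigma(T)$ and the observation that $\{0\}$ is isolated from $S^1$ so that the spectral cut is legitimate; everything else is a routine application of the spectral theorem, which is exactly why this yields a short alternate proof of \cite[Theorem 2.1]{PW21}.
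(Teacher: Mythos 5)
Your proof is correct, and its skeleton coincides with the paper's: invoke Lemma \ref{lemma6.1} (normal $\Rightarrow$ hyponormal), then use $\sigma_{ap}(T)=\sigma(T)$ for normal operators to get $\sigma(T)\subset S^1\cup\{0\}$. The only divergence is in the final decomposition step. The paper splits into cases ($T$ invertible versus not) and, in the non-invertible case, appeals to Stampfli's hyponormal results --- that an isolated point of the spectrum is an eigenvalue and that the corresponding eigenspace is reducing --- to peel off $\ker T$ and conclude the complementary restriction is unitary. You instead apply the spectral theorem directly: since $\{0\}$ is relatively clopen in $\sigma(T)\subset S^1\cup\{0\}$, the spectral projections $E(\{0\})$ and $E(S^1)$ give the decomposition, with the restriction to $\range E(S^1)$ unitary and the restriction to $\range E(\{0\})$ zero (norm equals spectral radius). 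For a normal operator your route is the more natural one and avoids importing hyponormal machinery; the paper's route has the mild advantage of reusing citations already in play for the hyponormal section. Your checks on when each summand is present are also correct and match the parenthetical in the statement.
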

\begin{proof} $T$ is nonselfadjoint so $T \ne 0$. Since a normal operator is also a hyponormal operator, so by Lemma \ref{lemma6.1}, $\sigma_{ap}(T) \subset S^1\cup\{0\}$. Moreover, for a normal operator, $\sigma(T) = \sigma_{ap}(T)$ \cite[Problem 79]{Hal82}. It then follows that $\sigma(T) \subset S^1\cup\{0\}$. If $T$ is invertible, then $\sigma(T) \subset S^1$. This implies that $T$ is a unitary operator \cite[Chapter 1, Section 1.3]{Olsen}. If $T$ is not invertible, then $0$ is an isolated point in $\sigma(T)$. And hence, $0$ is an eigenvalue of $T$ by \cite[Theorem 2]{Stamp62} and the eigenspace corresponding to the eigenvalue $0$ is a reducing subspace for $T$ \cite[Lemma 6]{Stamp62}. Therefore, $T$ is unitarily equivalent to $U \oplus 0$ where $U$ is a unitary operator on $(\ker T)^\perp$ with respect to the decomposition of the Hilbert space $\mathcal{H}=(\ker T)^\perp \oplus\ker T$.
\end{proof}

\textbf{Hyponormality versus normality.}  We are now ready to explore for hyponormality, the relationship between the SI property for $\Sc(T,T^*)$ and the  spectrum of $T$ in terms of spectral density as described in the Introduction concerning Section 4. 
The next theorem asserts that in addition to the hyponormality of $T$ and the SI property of $\Sc(T, T^*)$, if we further assume that the boundary of the spectrum of $T$ excludes at least one point of the unit circle, then we have normality of $T$. 

\begin{theorem}\label{hypo}
Suppose $T\in B(\mathcal{H})$ is a hyponormal operator and boundary of the spectrum of $T$ excludes at least one point of the unit circle. If $\Sc(T,T^*)$ is an SI semigroup, then $T$ is normal.
\end{theorem}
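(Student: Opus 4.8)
The plan is to show that the stated hypotheses force the planar area of $\sigma(T)$ to be zero, after which normality follows from Putnam's theorem \cite[Corollary]{Put70} that a hyponormal operator with zero-area spectrum is normal. So the work reduces to pinning down the location of $\sigma(T)$.

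First I would dispose of the trivial case: a selfadjoint $T$ is already normal, so we may assume $T$ is nonselfadjoint and invoke Lemma \ref{lemma6.1} together with Remark \ref{R6.1}(i). These give $\sigma(T)\subset\overline{\mathbb{D}}$ together with $\partial(\sigma(T))\subset S^1\cup\{0\}$. Writing $V:=\operatorname{int}(\sigma(T))$, since $\sigma(T)$ is closed one has $\partial V\subset\partial(\sigma(T))\subset S^1\cup\{0\}$; and since $V$ is an open subset of $\overline{\mathbb{D}}$, no point of $S^1$ can be interior to it, so in fact $V\subset\mathbb{D}$.

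The crux is a connectedness argument on the punctured disk $\mathbb{D}\setminus\{0\}$. Because every boundary point of $V$ lies in $S^1\cup\{0\}$, the set $V$ has no boundary point inside $\mathbb{D}\setminus\{0\}$; hence $V\setminus\{0\}=V\cap(\mathbb{D}\setminus\{0\})$ is relatively clopen in the connected set $\mathbb{D}\setminus\{0\}$, and is therefore either empty or all of $\mathbb{D}\setminus\{0\}$. If it were all of $\mathbb{D}\setminus\{0\}$, then $\mathbb{D}\setminus\{0\}\subset V\subset\sigma(T)$, and taking closures would give $\sigma(T)=\overline{\mathbb{D}}$, whence $\partial(\sigma(T))=S^1$ --- contradicting the hypothesis that the boundary of the spectrum omits a point of the unit circle. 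This is precisely the step where the extra hypothesis is used, and I expect it to be the main (if modest) obstacle: converting the single omitted boundary point into the global conclusion that the interior is empty.

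Consequently $V\setminus\{0\}=\emptyset$, so $V\subset\{0\}$, and as $V$ is open we get $V=\emptyset$; that is, $\sigma(T)$ has empty interior. Then $\sigma(T)=\partial(\sigma(T))\subset S^1\cup\{0\}$, a set of planar area zero, so $\area(\sigma(T))=0$. Applying \cite[Corollary]{Put70} to the hyponormal operator $T$ then yields that $T$ is normal, completing the argument.
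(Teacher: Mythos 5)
Your proposal is correct, and it follows the same overall skeleton as the paper's proof (nonselfadjoint reduction, Lemma \ref{lemma6.1} giving $\partial(\sigma(T))\subset S^1\cup\{0\}$ and $\sigma(T)\subset\overline{\mathbb D}$, then emptiness of the interior, then Putnam's inequality \cite{Put70}), but the crux step --- showing $\operatorname{int}(\sigma(T))=\emptyset$ --- is handled by a genuinely different mechanism. The paper argues pointwise: it picks $0\ne z\in\operatorname{int}(\sigma(T))$ and a point $w\in S^1\cap\rho(T)$ (which exists by the omitted-boundary-point hypothesis once one knows $\sigma(T)\subset\overline{\mathbb D}$), perturbs so that the segment $[z,w)$ avoids $0$, and locates a boundary point of $\sigma(T)$ on that segment inside $\mathbb D\setminus\{0\}$, contradicting $\partial(\sigma(T))\subset S^1\cup\{0\}$. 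You instead observe that $V:=\operatorname{int}(\sigma(T))$ satisfies $\partial V\subset\partial(\sigma(T))\subset S^1\cup\{0\}$, so $V\cap(\mathbb D\setminus\{0\})$ is relatively clopen in the connected punctured disk and is therefore empty or all of $\mathbb D\setminus\{0\}$; the latter forces $\sigma(T)=\overline{\mathbb D}$ and $\partial(\sigma(T))=S^1$, which is exactly what the hypothesis rules out. Your route is cleaner in two respects: it avoids the ``vary $z$ or $w$ slightly'' perturbation needed to keep the segment off the origin, and it uses the omitted boundary point only at the very end as a global obstruction rather than needing to convert it first into a resolvent point $w$ to aim a segment at. What the paper's segment argument buys in exchange is that it is entirely elementary and self-contained (sup of a parameter along a ray), whereas yours leans on the standard facts that $\partial(\operatorname{int} F)\subset\partial F$ for closed $F$ and that a nonempty relatively clopen subset of a connected set is everything --- both of which you invoke correctly. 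Either proof is complete.
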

\begin{proof} Since $\Sc(T, T^*)$ is an SI semigroup, every ideal in $\Sc(T, T^*)$ is selfadjoint. In particular, the principal ideal, $(T)_{\Sc(T, T^*)}$ is selfadjoint. So $T^* \in (T)_{\Sc(T, T^*)}$. Therefore, 
\begin{equation}\label {Eq}
T^* = XTY
\end{equation}
for some $X, Y \in \Sc(T, T^*) \cup \{I\}$. If $X = Y = I$, then $T^* = T$, hence the normality of $T$. So we may assume that $T$ is nonselfadjoint, in which case either $X \neq I$ or $Y \neq I$. To show that $T$ is normal, we will prove that $\sigma(T) \subset S^1 \cup \{0\}$ which has Lebesgue measure zero, and hence so also $\sigma(T)$, implying normality of $T$ by \cite[Corollary]{Put70}. 

Since $T$ is a nonselfadjoint hyponormal operator and $\Sc(T, T^*)$ is an SI semigroup, ${\{0\} \ne~} \sigma_{ap}(T)\subset S^1\cup\{0\}$ (Lemma \ref{lemma6.1}). To prove $\sigma(T) \subset S^1 \cup \{0\}$, we will show that $\sigma(T) = \partial(\sigma(T))$, the latter of which is a subset of $\sigma_{ap}(T)$ \cite[Problem 78]{Hal82}.   
We will first show that $\sigma(T) \subset \overline{\mathbb D}$. Since always $\partial (\sigma(T)) \subset \sigma_{ap}(T)$ as stated above, this implies that 
$\partial (\sigma(T)) \subset S^1 \cup \{0\}$. It follows that $\sigma(T) \subset \overline{\mathbb D}$. Indeed, if $\lambda \in \sigma(T)$ is a point outside $\overline{\mathbb D}$,  since $\sigma(T) = \partial (\sigma(T)) \cup \text{int}(\sigma(T))$ and $\partial (\sigma(T)) \subset S^1 \cup \{0\}$, then $\lambda$ lies in an open ball inside $\text{int}(\sigma(T))$ and outside $\overline{\mathbb D}$. Then the ray $\{t\lambda \mid t \geq 0\}$ from $0$ through $\lambda$ must exit the bounded set $\sigma(T)$ in a boundary point $t_0\lambda$ because $1< t_0:=\text{sup}\{t\geq 0~|~t\lambda\in\sigma(T)\}<\infty$
	(due to the compactness of the spectrum and that for $t=1,~t\lambda=\lambda\in \text{int}(\sigma(T))\setminus\overline{\mathbb{D}}$). Then it is clear that $t_0\lambda\in\partial(\sigma(T))$, contradicting the inclusion $\partial(\sigma(T))\subset \mathbb{S}^1\cup \{0\}$. This completes the proof that $\sigma(T) \subset \overline{\mathbb D}$.

From $\sigma(T) \subset \overline{\mathbb D}$ we claim further that $\sigma(T) \subset S^1 \cup \{0\}$. 
Since $\sigma(T) = \partial (\sigma(T)) \cup \text{int}(\sigma(T))$ and as shown above $\partial (\sigma(T)) \subset S^1 \cup \{0\}$, it suffices to show that $\text{int}(\sigma(T)) = \emptyset$.
Suppose otherwise that $\text{int}(\sigma(T)) \neq \emptyset$. Since $\sigma(T) \subset \overline{\mathbb D}$ and the interior is open, one can chose a $0 \neq z \in \text{int}(\sigma(T)) \subset \mathbb D$. And as by the hypothesis that boundary of the spectrum of $T$ excludes at least one point of the unit circle, choose a $w$ on the unit circle with $w \in \rho(T)$, the open resolvent set of $T$, and hence the resolvent also contains an open ball around $w$. 
So $z, w$ are respectively in the disjoint open sets $\text{int}(\sigma(T))$ and $\rho(T)$. Since $z\in \mathbb D$ and $w \in S^1$, the line segment $[z,w)$ lies entirely in $\mathbb D$ and by varying slightly either $z$ or $w$ we can assume further from a simple geometric argument that $[z,w)$ lies entirely in $\mathbb D \setminus \{0\}$. Then representing $[z,w) := \{(1-t)z + tw \mid 0 \leq t < 1\}$, set $t_o = \text{sup} \{0 \leq t < 1 \mid (1-t)z + tw \in \sigma(T)\}$. Because $\text{int}(\sigma(T))$ and  $\rho(T)$ are open and disjoint, it is clear that $0 \leq t_o < 1$ (as $z \in \text{int}(\sigma(T))$ and $w \in \rho(T)$ as well as the whole segment  $((1-t_o)z + t_ow,w] := \{(1-t)z + tw \mid t_o <t \le 1\} \subset \rho(T)$. It is clear by construction that $(1-t_o)z + t_ow$ is a boundary point of the spectrum and that this boundary point lies inside the line segment $[z,w)$, hence inside the open disk $\mathbb D \setminus \{0\}$. That is, $(1-t_o)z + t_ow$ is a boundary point of $\sigma(T)$ that lies in $\mathbb D \setminus \{0\}$, against what we showed earlier that the boundary of the spectrum lies entirely in $S^1 \cup \{0\}$. Thus we have showed that 
$\sigma(T) \subset S^1 \cup \{0\}$. Since the latter set has area zero, so $\sigma(T)$ has area zero and hence $T$ is normal, as mentioned above, by \cite[Corollary]{Put70}.
\end{proof}

\begin{remark}\label{R-shift}
(i). The assumption in Theorem \ref{hypo} on the boundary of the spectrum is necessary for the conclusion to hold because if we consider $T$ to be the unilateral shift, then $T$ is hyponormal as $T^*T - TT^*$ is a rank-one projection operator and $T$ being an isometry, $\Sc(T, T^*)$ is an SI semigroup \cite[Remark 1.22(v)]{PW21}. But $T$ is not normal. Yet $\sigma(T) = \overline{\mathbb D}$ whose boundary is the entire unit circle.

\noindent(ii). 	For $T$ a nonselfadjoint hyponormal operator, if  $\Sc(T,T^*)$ is an SI semigroup and $\sigma(T)$ excludes at least one point of $\mathbb{S}^1$, then by Theorem \ref{hypo}, $T$ is normal. And consequently by \cite[Theorem 2.1]{PW21}, $T$ is unitarily equivalent to $U\oplus0$ ($0$ may be absent) which is further equivalent to simplicity of  $\Sc(T,T^*)$.  
\end{remark}
Under the SI property of $\Sc(T, T^*)$ assumption, we prove next that the normality of a hyponormal operator is equivalent to $\area(\sigma(T)) = 0$; and (denoting the essential spectrum by $\sigma_e(T)$) the essential normality of a subnormal operator is equivalent to $\area(\sigma_e(T)) = 0$ (Proposition \ref{hyponormal} and Corollary \ref{corollary7.3} below).

\begin{proposition}\label{hyponormal}
	Let $T\in B(\mathcal{H})$ be a hyponormal operator. Suppose $\Sc(T,T^*)$ is an SI semigroup. Then, $T$ is normal if and only if Area$(\sigma(T))=0$.
\end{proposition}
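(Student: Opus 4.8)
The plan is to prove the two implications separately, observing that the two directions have quite different characters: the backward implication is essentially Putnam's theorem and uses no SI hypothesis, whereas the forward implication is exactly where the SI property does its work.

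For the backward direction ($\area(\sigma(T))=0 \Rightarrow T$ normal), I would simply invoke Putnam's result \cite[Corollary]{Put70}, recalled in the Introduction: a hyponormal operator whose spectrum has zero planar area is automatically normal. Since $T$ is assumed hyponormal, this direction is immediate. Note in particular that the SI assumption plays no role here.

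For the forward direction ($T$ normal $\Rightarrow \area(\sigma(T))=0$), I would split into two cases according to whether $T$ is selfadjoint. If $T$ is selfadjoint, then $\sigma(T)\subset\mathbb{R}$, which trivially has planar area zero. If instead $T$ is normal but nonselfadjoint, then since normal operators are hyponormal and $\Sc(T,T^*)$ is SI, Corollary \ref{normal} (equivalently \cite[Theorem 2.1]{PW21}) applies and yields that $T$ is unitarily equivalent to $U\oplus 0$ for some unitary $U$. Consequently $\sigma(T)\subset S^1\cup\{0\}$, a set of planar area zero, and since the spectrum is a unitary invariant, $\area(\sigma(T))=0$. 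Alternatively, one can bypass Corollary \ref{normal} in this case and argue directly from Lemma \ref{lemma6.1}: for a normal operator $\sigma(T)=\sigma_{ap}(T)$ by \cite[Problem 79]{Hal82}, and Lemma \ref{lemma6.1} gives $\sigma_{ap}(T)\subset S^1\cup\{0\}$ under the SI hypothesis, whence $\sigma(T)\subset S^1\cup\{0\}$ again has area zero.

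I do not anticipate any genuine obstacle here; the proof is short and assembles already-established tools. The only point requiring care is to treat the selfadjoint case separately in the forward direction, since both Lemma \ref{lemma6.1} and Corollary \ref{normal} are stated for nonselfadjoint operators, while the proposition as phrased permits $T$ to be selfadjoint (in which case normality and zero-area spectrum both hold trivially).
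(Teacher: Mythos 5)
Your proposal is correct and follows essentially the same route as the paper's own proof: Putnam's theorem for the backward implication, and for the forward implication the selfadjoint case handled trivially plus \cite[Theorem 2.1]{PW21} (Corollary \ref{normal}) giving $T\cong U\oplus 0$ and hence $\sigma(T)\subset S^1\cup\{0\}$ in the nonselfadjoint case. The only cosmetic difference is that the paper quotes Putnam's inequality $\pi\,\|T^*T-TT^*\|\leq \area(\sigma(T))$ directly rather than its corollary, which is the same content.
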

\begin{proof}
	For $T$ a hyponormal operator, Putnam's Inequality \cite[Theorem 1]{Put70} is given by  
	$$\pi\,||T^*T-TT^*|| \leq \area(\sigma(T)).$$ 
Hence $\area(\sigma(T))=0$ implies that $T$ is normal. Conversely, let $T$ be a normal operator. 
 When $T$ is selfadjoint, its spectrum is on the line and so has zero area. And when $T$ is nonselfadjoint,  
 the SI property of $\Sc(T, T^*)$ implies that $T$ is unitarily equivalent to $U\oplus 0$ (the zero summand may be absent) with $U$ a unitary operator by \cite[Theorem 2.1]{PW21}.	
	 Therefore, $\sigma(T)\subset S^1\cup\{0\}$, and hence Area$(\sigma(T))=0$.
\end{proof}
Without the SI property of $\Sc(T, T^*)$ assumption for a hyponormal operator, the normality of $T$ does not imply Area$(\sigma(T))=0$ as shown in the following example.
 \begin{example}\label{ee-1}
	Consider the measure space $(X,\mu)$, where $X$ is the closed unit disk centered at the origin in the complex plane and $\mu$ is the Lebesgue measure on $X$. Then the multiplication operator $M_z : L^2(X,\mu)\longrightarrow L^2(X,\mu)$ is a normal nonselfadjoint operator, but $\Sc(M_{z}, M_{\bar{z}})$ is not SI by \cite[Theorem 2.1]{PW21} because $M_{z}$ is not a unitary operator (as $M_{\phi}$ is unitary if and only if $|\phi| = 1$ a.e.), nor is it of the form $U \oplus 0$ (compare their spectra). But the spectrum $\sigma(M_z)$ is the essential range of $\phi(z) = z$, which is equal to the closed unit disk as it is a continuous function, and hence has nonzero area. 
\end{example}

So far we have observed that for special classes of hyponormal operators, for example, in the case of normal operators and in the case of nonselfadjoint hyponormal operators for which the boundary of the spectrum excludes at least one point of the unit circle, the SI property of the semigroup $\Sc(T, T^*)$ yields its simplicity (see \cite[Theorem 2.1]{PW21} and Remark \ref{R-shift} (ii)). In light of this, it is of interest to us to ask the following question.

\begin{question} Does there exist $T\in B(\mathcal{H})$, a hyponormal non-normal operator such that $\Sc(T,T^*)$ is a non-simple SI semigroup?
\end{question}

\subsection{SI Semigroups in a unital $C^*$-algebra}
  In \cite{PW21} we studied the SI selfadjoint semigroups $\Sc(T,T^*)$ in the unital $C^*$-algebra $B(\mathcal{H})$. 
We generalize this notion of SI semigroup in a natural way to an arbitrary $*$-algebra, in particular, to an arbitrary $C^*$-algebra. 

\begin{definition}
	A $*$-algebra is an algebra $\mathcal{A}$ together with an involution map
	$$*: \mathcal{A}\rightarrow\mathcal{A}$$ 
	defined by $$a\longmapsto a^*$$ where $*$ is a conjugate-linear map such that $a^{**}=a$ and $(ab)^*=b^*a^*$ for all $a,b\in \mathcal{A}.$
	\end{definition}
\begin{definition} A $C^*$-algebra is a $*$-algebra $\mathcal{A}$ together with a submultiplicative norm such that $||a^*a||=||a||^2$ for all $a\in \mathcal{A}$ and $\mathcal A$ is complete with respect to that norm. Furthermore, if $\mathcal{A}$ has a unit, then we call $\mathcal{A}$ a unital $C^*$-algebra.
\end{definition}

The definitions of a semigroup, selfadjoint semigroup, and SI semigroup are easily generalized to a $*$-algebra $\mathcal A$ as mentioned below.	
\begin{definition}
	A semigroup $\mathcal{S}$ in $\mathcal{A}$ is a subset closed under multiplication. A selfadjoint semigroup $\mathcal{S}$ is a semigroup that is also closed under the involution map, i.e., $\mathcal{S}^*:=\{a^*| a\in \mathcal{S}\}\subset \mathcal{S}.$
	\end{definition}
\begin{definition} An ideal $J$ of a semigroup $\mathcal{S}$ in $\mathcal{A}$ is a subset of $\mathcal{S}$ closed under products of elements in $\mathcal{S}$ and $J$, i.e., $xa,ay\in J~\mbox{for}~ a\in J~\mbox{and}~x,y\in \mathcal{S}.$ And so also $xay\in J.$
\end{definition}
\begin{definition} A selfadjoint-ideal (SI) semigroup $\mathcal{S}$ in $\mathcal{A}$ is a semigroup for which every ideal $J$ of $\mathcal{S}$ is closed under involution, i.e., $J^*:=\{a^*|a\in J\}\subset J.$ 
\end{definition}
%Since every C*-algebra $\mathcal{A}$ is a $*$-algebra, we can similarly define semigroup, ideal, SI semigroup in a C*-algebra. 

For a unital $C^*$-algebra $\mathcal A$ and $a \in \mathcal A$, consider the singly generated selfadjoint semigroup $\Sc(a, a^*)$ generated by $a$. Then, note that $\Sc(a, a^*) \subset C^*(a)$, the singly generated unital $C^*$-algebra generated by $a$.  A complete description of elements of $\Sc(a, a^*)$ can be obtained exactly similar to that of $\Sc(T, T^*)$, described prior to section 2, just by replacing $T$ with $a$. Precisely,

\noindent $ S(a, a^*) = \{a^n, {a^*}^n, \Pi_{j=1}^{k}a^{n_j}{a^*}^{m_j},  (\Pi_{j=1}^{k}a^{n_j}{a^*}^{m_j})a^{n_{k+1}}, \Pi_{j=1}^{k}{a^*}^{m_j}a^{n_j}, (\Pi_{j=1}^{k}{a^*}^{m_j}a^{n_j}){a^*}^{m_{k+1}},$ where $n \ge 1,\,  k\ge1,\, n_j, m_j \ge 1\, \text{for}\, 1 \le j \le k, ~\text{and}~n_{k+1}, m_{k+1} \geq 1\}$.

We begin with showing that $\Sc(a,a^*)$ possessing the SI property, for a non-invertible normal element $a \in \mathcal{A}$, we obtain nontrivial projections in $C^*(a)$ (Corollary \ref{projection}). Here we note that for a normal element $a \in \mathcal A$, $C^*(a)$ has no nontrivial projections if and only if $\sigma(a)$ (the spectrum of $a$) is connected. This follows from \cite[Theorem 2.1.13]{Murphy} which says that there exists a unique isometric $*$-algebra isomorphism $\phi$ from the $C^*$-algebra of all complex-valued continuous functions on $\sigma(a)$ onto $C^*(a)$. And a direct calculation shows that in the $C^*$-algebra of all complex-valued continuous functions on $\sigma(a)$, there are no nontrivial projections if and only if $\sigma(a)$ is a connected set. In Corollary \ref{projection}, we prove that under the SI property of $\Sc(a, a^*)$ for a normal non-invertible element $a$, the spectrum of $a$ is disconnected; thereby implying the existence of nontrivial projections in $C^*(a)$. Also, for a non-normal idempotent element $a$ in a $C^*$-algebra $\mathcal A$, the SI property of $\Sc(a, a^*)$ implies the existence of nontrivial projections in the singly generated $C^*$-algebra, $C^*(a)$ (see Remark~\ref{nontrivial_projection})
which is a consequence of Theorem~\ref{idempotent} where we proved that for a non-normal idempotent element $a \in \mathcal{A}$, the SI property of $\Sc(a,a^*)$ is equivalent to $a$ being a partial isometry. (For more general $C^*$-algebras, certain necessary and sufficient conditions for a $C^*$-algebra to be projectionless are stated in \cite[Proposition 3.3]{Black80}.)

Towards proving the existence of nontrivial projections in $C^*(a)$ for a non-invertible normal element $a$ (Corollary \ref{projection}), we first prove the following theorem.
\begin{theorem}\label{theorem6.3}
	For a normal nonselfadjoint element $a\in \mathcal{A}$, a unital C*-algebra, if $\Sc(a,a^*)$ is an SI semigroup, then $\sigma(a)\subset S^1\cup \{0\}$.
\end{theorem}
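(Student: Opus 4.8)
The plan is to mimic the character argument of Lemma \ref{lemma6.1}, but to exploit normality so as to capture the \emph{entire} spectrum rather than only the approximate point spectrum. Since $\Sc(a,a^*)$ is SI, the principal ideal $(a)_{\Sc(a,a^*)}$ is selfadjoint, so $a^* = XaY$ for some $X,Y \in \Sc(a,a^*) \cup \{I\}$, and because $a$ is nonselfadjoint, $X$ and $Y$ cannot both be $I$.

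First I would recall that for a normal element $a$ the unital $C^*$-algebra $C^*(a)$ is abelian, so by the Gelfand theorem its character space $\Omega$ is homeomorphic to $\sigma(a)$; concretely, via the continuous functional calculus $C^*(a) \cong C(\sigma(a))$ with $a$ corresponding to the identity function $z \mapsto z$, and the characters of $C(\sigma(a))$ are exactly the point evaluations $\mathrm{ev}_\lambda$, $\lambda \in \sigma(a)$. Equivalently, since the Gelfand transform is an isometric $*$-isomorphism, $\sigma(a) = \{\phi(a) : \phi \in \Omega\}$, so for each $\lambda \in \sigma(a)$ there is a character $\phi$ on $C^*(a)$ with $\phi(a) = \lambda$. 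This is the step that replaces the appeal to Bunce's result \cite{JB70} used in Lemma \ref{lemma6.1} for the hyponormal, approximate-point-spectrum case, and it is precisely where normality does the work.

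Next I would apply such a character $\phi$ to the relation $a^* = XaY$. Since $\phi$ is multiplicative and $*$-preserving and $X,Y$ are words in $a,a^*$, writing $\phi(a)=\lambda$ and $\phi(a^*)=\overline\lambda$ gives $\overline\lambda = \lambda^n \overline\lambda^{\,m}$, where $n \ge 1$ (the middle letter $a$ always contributes a factor $\lambda$) and $n+m \ge 2$ because $X$ and $Y$ are not both $I$, so at least one nontrivial letter is contributed beyond the central $a$ — exactly the exponent tally carried out in Lemma \ref{lemma6.1}. Taking absolute values yields $|\lambda| = |\lambda|^{n+m}$ with $n+m \ge 2$, so for $\lambda \ne 0$ one gets $|\lambda|^{\,n+m-1}=1$, i.e. $|\lambda|=1$, while $\lambda=0$ lands in $\{0\}$. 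Since $\lambda \in \sigma(a)$ was arbitrary, $\sigma(a) \subset S^1 \cup \{0\}$.

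I do not anticipate a serious obstacle here: the argument is essentially the one in Lemma \ref{lemma6.1} with the approximate point spectrum replaced by the full spectrum, which is legitimate precisely because normality guarantees a character at every spectral point. The only points requiring care are the bookkeeping of the exponents $n,m$ to secure the degree bound $n+m\ge 2$, and confirming that $C^*(a)$ being unital makes the Gelfand correspondence between $\sigma(a)$ and $\Omega$ exact, so that no spectral value is missed.
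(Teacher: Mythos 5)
Your proposal is correct and takes essentially the same route as the paper: the paper applies the inverse of the Gelfand isomorphism $C^*(a)\cong C(\sigma(a))$ to the relation $a^*=XaY$ to get the function identity $\bar f=f^n\bar f^{\,m}$ with $n+m\ge 2$ and then evaluates at each $z\in\sigma(a)$, which is precisely your character $\mathrm{ev}_z$ applied to the operator equation. The exponent bookkeeping and the final step $|z|=|z|^{n+m}$ forcing $|z|\in\{0,1\}$ are identical.
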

\begin{proof}
	Since $a$ is a normal element in a unital C*-algebra $\mathcal{A}$, it follows from \cite[Theorem 2.1.13]{Murphy} that there exists a unique isometric $*$-algebra isomorphism $\phi$ from the $C^*$-algebra of all complex-valued continuous functions on $\sigma(a)$ onto $C^*(a)$, where $C^*(a)$ is the C*-algebra generated by $1$ and $a$ for which $\phi(f)=a$ where $f$ is the identity function on $\sigma(a)$, i.e, $f(z)=z~\mbox{for all}~z\in\sigma(a)$. 
 Since $\phi$ is a $*$-isomorphism, $\phi(\bar{f})=a^*$. Moreover, $a\neq a^*$ because $a$ is a nonselfadjoint element.
	
	Supposing $\Sc(a,a^*)$ is an SI semigroup, then every ideal of $\Sc(a,a^*)$ is selfadjoint. In particular, the principal ideal $(a)_{\Sc(a,a^*)}$ is selfadjoint. Therefore, $a^*=xay$ for some $x,y\in \Sc(a,a^*)\cup\{1\}$ but where $x$ and $y$ are not both equal to $1$, otherwise $a$ would be selfadjoint against the hypothesized nonselfadjointness of $a$. Since $\phi$ is a $*$-preserving isometric isomorphism, 
so also is $\phi^{-1}$. So $a^*=xay$ implies that $$\phi^{-1}(a^*)=\phi^{-1}(x)\phi^{-1}(a)\phi^{-1}(y),$$ that is, besides being the identity function on the spectrum of $a$, $f$ must also satisfy 
$$\bar{f}=f^n\bar{f}^m, \text{~for~} n \ge 1, m \ge 0$$ but additionally where either $n> 1, m= 0$ or $m\geq 1, n\geq 1$. The case $m=0,n=1$ does not occur because in that case $\bar{f}=f$ implying $a^*=a$ against the nonselfadjointness of $a$. Furthermore, since $f(z) = z$ for $z\in \sigma(a)$, evaluating the function equation in the above display at $z$ and then taking the absolute value, one obtains:  for some $k \ge 2$,
$$|z|=|z|^k\hspace{.25cm}\mbox{for all}~z\in \sigma(a).$$ 
Therefore for each $z\in \sigma(a)$, either $z= 0$ or $|z|=1$. 
Hence $\sigma(a)\subset S^1\cup\{0\}$.
\end{proof}	
\begin{corollary}\label{projection}
For $a\in \mathcal{A}$ a normal nonselfadjoint \textit{non-invertible} element in a unital C*-algebra $\mathcal{A}$, if $\Sc(a,a^*)$ is an SI semigroup, then $C^*(a)$ has nontrivial projections.\end{corollary}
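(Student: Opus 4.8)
The plan is to reduce the statement to the disconnectedness of $\sigma(a)$, exploiting the continuous functional calculus isomorphism $\phi : C(\sigma(a)) \to C^*(a)$ recalled just before Theorem \ref{theorem6.3}, which sends clopen subsets of the spectrum to projections. The essential input is already at hand: by Theorem \ref{theorem6.3}, the hypotheses (that $a$ is normal, nonselfadjoint, and $\Sc(a,a^*)$ is SI) yield $\sigma(a) \subset S^1 \cup \{0\}$. So all that remains is to extract a nontrivial projection from the geometry of this spectrum together with the assumption that $a$ is non-invertible.

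First I would pin down two points of $\sigma(a)$ that force disconnectedness. Since $a$ is non-invertible, $0 \in \sigma(a)$. On the other hand, $a$ is normal and nonselfadjoint, hence $a \neq 0$, so its spectral radius satisfies $r(a) = \|a\| > 0$; thus $\sigma(a)$ contains a point of modulus $\|a\| > 0$, which by Theorem \ref{theorem6.3} must lie on $S^1$. Consequently $\sigma(a)$ contains both $0$ and a point of $S^1$, and in particular $\sigma(a) \neq \{0\}$. Next I would observe that $0$ is isolated from the remainder of the spectrum: because $\sigma(a) \subset S^1 \cup \{0\}$, the open ball of radius $1/2$ about $0$ meets $\sigma(a)$ only in $\{0\}$, so $\{0\}$ is relatively open in $\sigma(a)$, and being a single point it is also relatively closed. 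Since $\sigma(a)$ also has a point on $S^1$, the set $\{0\}$ is a proper nonempty clopen subset, whence $\sigma(a)$ is disconnected.

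Finally I would produce the projection explicitly. The characteristic function $\chi_{\{0\}}$ is continuous on $\sigma(a)$ (as $\{0\}$ is clopen), satisfies $\chi_{\{0\}}^2 = \chi_{\{0\}} = \overline{\chi_{\{0\}}}$, and is neither the zero function nor the constant function $1$ (because $\sigma(a)$ meets both $\{0\}$ and $S^1$); hence $P := \phi(\chi_{\{0\}})$ is a nontrivial projection in $C^*(a)$, which is exactly the assertion. (Equivalently, one may simply invoke the characterization recalled in the text that, for normal $a$, $C^*(a)$ has no nontrivial projections if and only if $\sigma(a)$ is connected.) I do not expect a genuine obstacle here; the one point requiring care is the use of normality to guarantee $\sigma(a) \neq \{0\}$ via $r(a) = \|a\|$, since that is precisely what makes the clopen piece $\{0\}$ a \emph{proper} subset and hence renders the resulting projection nontrivial.
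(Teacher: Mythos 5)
Your proposal is correct and follows essentially the same route as the paper: both invoke Theorem \ref{theorem6.3} to get $\sigma(a)\subset S^1\cup\{0\}$, use non-invertibility to place $0$ in the spectrum, use $r(a)=\|a\|>0$ for the nonzero normal element $a$ to guarantee a point of $\sigma(a)$ on $S^1$, and then pull back the characteristic function of the clopen piece $\{0\}$ (the paper's $\chi_B$, together with its complement $\chi_A$) through the functional-calculus isomorphism to obtain a nontrivial projection. The only difference is cosmetic: the paper additionally records the relations $p+q=1$, $a^*ap=p=aa^*p$, and $a=ap$, which your argument does not need.
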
	

\begin{proof} From Theorem \ref{theorem6.3}, $\sigma(a)\subset S^1\cup \{0\}$. 
Since $a$ is not invertible, $0 \in \sigma(a)$.  
Moreover,  $\sigma(a) \neq \{0\}$. Indeed, otherwise the spectral radius $r(a) = 0$. But since $a$ is normal, $r(a) = ||a||$ \cite[Lemma 1.2.7]{Lin} and since $a$ is also nonzero, $r(a) = ||a|| > 0$, a contradiction.

%$0\notin \sigma(a)$, then $\sigma(a)\subset S^1$, in that case $a$ is unitary.
 Let $A=\sigma(a)\cap S^1$ and $B=\{0\}$. Since $\sigma(a)\subset S^1\cup \{0\}$ and $\sigma(a) \neq \{0\}$, the set $A\neq \emptyset$. Moreover, $A$ and $B$ are disjoint compact sets and $\sigma(a)=A\cup B$. Therefore, the characteristic functions $\chi_A$ and $\chi_B$ are continuous on $\sigma(a)$. 
Moreover, $\chi_A$  satisfies the equations $\chi^2_A=\chi_A$ and $\chi^*_A=\chi_A$, so also $\chi_B$. Hence, $\chi_A$ and $\chi_B$ are projection functions. Since $A$ and $B$ are non-empty proper subsets of $\sigma(a)$, so $\chi_A$ and $\chi_B$ are neither equal to the $0$ function nor equal to the constant function $1$. And hence $\chi_A$ and $\chi_B$ are nontrivial projection functions in $C(\sigma(a))$. 
Since $\phi$ from the proof of Theorem \ref{theorem6.3} is a $*$-isomorphism, $p:=\phi(\chi_A)$ and $q:=\phi(\chi_B)$ are nontrivial projections in $C^*(a)$. This completes the proof of the theorem. However, we can say more.

 Additionally, since $\chi_A+\chi_B=1$, $p+q=1$ where $1$ is the unit element of $C^*(a)$. Since $C(\sigma(a))$ is abelian, denoting the identity function on the spectrum by $f(z) = z$ and using the fact from Theorem \ref{theorem6.3} that $\phi(f) = a$, one has  
$$\bar{f}f\chi_A=\chi_A=f\bar{f}\chi_A.$$
	Applying $\phi$ to this equation, one obtains $$a^*ap=p=aa^*p.$$
	Also $f\chi_B=0$, so $aq=a(1-p)=0$, and hence $a = ap$. \end{proof}
	
Another such application of the SI property which guarantees the existence of nontrivial projection is given later in Remark \ref{nontrivial_projection}(i).

As an application of Subsection 4.1, Theorem \ref{theorem6.3}, consider the Calkin algebra $B(\mathcal H)/ K(\mathcal H)$ which is a unital $C^*$-algebra and the quotient map $\pi : B(\mathcal H) \rightarrow B(\mathcal H)/K(\mathcal H)$. For $T \in B(\mathcal H)$, $\sigma_e(T)$ is called the essential spectrum of $T$ which is defined as the spectrum of $\pi(T)$ in the Calkin algebra, i.e., $\sigma(\pi(T)) :=\sigma_e(T)$. We found that the SI property of $\Sc(T, T^*)$ generated by an essentially normal operator determines the spectral thinness of the essential spectrum of $T$ in Corollary \ref{corollary6.4} below. And given the SI property for $\Sc(T, T^*)$, we also obtain 
in Corollary \ref{corollary7.3} a complete characterization of which subnormal operators $T$ are essentially normal (i.e., when $\pi(T)$ is  normal in the Calkin algebra) in terms of the area of the essential spectrum. 
 
\begin{corollary}\label{corollary6.4}
	For $T\in B(\mathcal{H})$ an essentially normal operator, if $\Sc(T,T^*)$ is an SI semigroup, then either 
$$\sigma_e(T)\subset \mathbb{R}~\mbox{\quad or \quad}~\sigma_e(T)\subset S^1\cup \{0\}.$$
\end{corollary}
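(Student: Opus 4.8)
The plan is to push the problem into the Calkin algebra and invoke Theorem \ref{theorem6.3}. Write $\mathcal{A} = B(\mathcal{H})/K(\mathcal{H})$ for the Calkin algebra, a unital $C^*$-algebra with unit $\pi(I) = 1$, and set $a := \pi(T)$. By definition of the essential spectrum, $\sigma_e(T) = \sigma(\pi(T)) = \sigma(a)$, so the conclusion is a statement purely about $\sigma(a)$. Since $T$ is essentially normal, $a = \pi(T)$ is a normal element of $\mathcal{A}$. Thus everything reduces to proving that, for this normal element $a$, one has $\sigma(a) \subset \mathbb{R}$ or $\sigma(a) \subset S^1 \cup \{0\}$.

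First I would transfer the relevant hypothesis to $a$. Because $\pi$ is a unital $*$-homomorphism, it carries each word in $T, T^*$ to the corresponding word in $a, a^*$, so $\pi(\Sc(T,T^*)) = \Sc(a, a^*)$. Since $\Sc(T, T^*)$ is SI, its principal ideal $(T)_{\Sc(T,T^*)}$ is selfadjoint, giving $T^* = XTY$ for some $X, Y \in \Sc(T,T^*) \cup \{I\}$. Applying $\pi$ yields $a^* = \pi(X)\, a\, \pi(Y)$ with $\pi(X), \pi(Y) \in \Sc(a,a^*) \cup \{1\}$; that is, $a^* \in (a)_{\Sc(a,a^*)}$, which is the only hypothesis consumed by the proof of Theorem \ref{theorem6.3}. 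If one prefers to cite Theorem \ref{theorem6.3} verbatim, one first checks that $\Sc(a,a^*) = \pi(\Sc(T,T^*))$ is itself SI: given an ideal $J$ of $\Sc(a,a^*)$, the set $I := \{x \in \Sc(T,T^*) : \pi(x) \in J\}$ is an ideal of $\Sc(T,T^*)$ (hence selfadjoint), and pulling back any $y \in J$ to some $x \in I$ gives $y^* = \pi(x^*) \in \pi(I) \subset J$, so $J^* \subset J$.

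Now I split on whether $a$ is selfadjoint. If $a = a^*$, then $a$ is a selfadjoint element of a $C^*$-algebra, so $\sigma(a) \subset \mathbb{R}$, and hence $\sigma_e(T) \subset \mathbb{R}$. If $a \neq a^*$, then $a$ is a normal \emph{nonselfadjoint} element of the unital $C^*$-algebra $\mathcal{A}$ for which $\Sc(a,a^*)$ is SI, so Theorem \ref{theorem6.3} applies and gives $\sigma(a) \subset S^1 \cup \{0\}$, i.e., $\sigma_e(T) \subset S^1 \cup \{0\}$. Combining the two cases yields the dichotomy in the statement.

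The step needing the most care is the transfer of the SI hypothesis through the noninjective quotient map $\pi$: unlike the isometric $*$-isomorphisms used in Section 2, $\pi$ has a large kernel, so one must verify that the selfadjointness of the principal ideal (equivalently, the solvability of $a^* = \pi(X)a\pi(Y)$ inside $\Sc(a,a^*)$) really does survive. This is exactly what applying $\pi$ to $T^* = XTY$ secures, and it is all that the proof of Theorem \ref{theorem6.3} requires; the selfadjoint/nonselfadjoint case split and the translation $\sigma(a) = \sigma_e(T)$ are then routine.
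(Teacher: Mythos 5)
Your proposal is correct and follows essentially the same route as the paper: push everything into the Calkin algebra via $\pi$, observe that the SI property (or at least the solvability of $a^*=\pi(X)a\pi(Y)$) survives the quotient, and then split into the selfadjoint case ($\sigma_e(T)\subset\mathbb{R}$) and the nonselfadjoint normal case handled by Theorem \ref{theorem6.3}. The only difference is that you spell out the transfer of the SI property (via the pullback-of-ideals argument) in more detail than the paper, which merely calls it a straightforward computation.
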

\begin{proof}
	Suppose $\Sc(T,T^*)$ is an SI semigroup. Then, using the $*$-homomorphism 
 $\pi$, note that the semigroup $\Sc(\pi(T),\pi(T)^*)$ is also an SI semigroup in $B(\mathcal{H})/\mathcal{K}(\mathcal{H})$). That $\pi$ preserves the SI property for singly generated semigroups is a straightforward computation using the easily proved fact that  $\Sc(T,T^*)$ is an SI semigroup if and only if for each $A \in  \Sc(T,T^*)$, $A^* = XAY$ for some $X,Y \in \Sc(T,T^*) \cup \{I\}$.
Moreover, normality of $\pi(T)$ implies one of the two possibilities: either $\pi(T)$ is selfadjoint or $\pi(T)$ is a nonselfadjoint normal element. In the former case, $\sigma(\pi(T)) = \sigma_e(T)\subset \mathbb{R}$, and in the latter case, by Theorem~\ref{theorem6.3}, 
$\sigma_e(T) = \sigma(\pi(T))\subset S^1\cup\{0\}.$
\end{proof}
It follows from \cite[Corollary 31.15]{Conway} that for a subnormal operator $T$, if $\area (\sigma_e(T))=0$, then $T$ is essentially normal. The converse does not hold in general (see example in Remark \ref{subnormal} below). If we assume that $\Sc(T, T^*)$ is an SI semigroup, then the converse holds for a subnormal operator $T$, i.e., if $T$  is essentially normal, then $\area (\sigma_e(T))=0$. Indeed, by Corollary \ref{corollary6.4}, $\mbox{either}~\sigma_e(T)\subset \mathbb{R}~\mbox{or}~\sigma_e(T)\subset S^1\cup \{0\}.$ Therefore, $\area(\sigma_e(T))=0$. We summarize these results in the corollary below.

\begin{corollary}\label{corollary7.3}
			Let $T\in B(\mathcal{H})$ be a subnormal operator. Suppose $\Sc(T,T^*)$ is an SI semigroup. Then,
	$$T  ~\mbox{is essentially normal if and only if } \area (\sigma_e(T))=0.$$ 
	\end{corollary}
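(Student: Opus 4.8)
The plan is to prove the two implications separately, observing at the outset that only one of them actually uses the SI hypothesis; the corollary is essentially an assembly of a classical fact about subnormal operators with the SI dichotomy already established in Corollary~\ref{corollary6.4}.

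First I would dispatch the direction ``$\area(\sigma_e(T))=0 \Rightarrow T$ essentially normal.'' This is entirely classical and requires no semigroup input: it is precisely \cite[Corollary 31.15]{Conway}, which asserts that a subnormal operator whose essential spectrum has zero planar area is automatically essentially normal. So this half needs neither the SI property nor any of the new machinery.

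For the converse direction, ``$T$ essentially normal $\Rightarrow \area(\sigma_e(T))=0$,'' I would bring in the SI hypothesis through Corollary~\ref{corollary6.4}. Assuming $T$ is essentially normal and $\Sc(T,T^*)$ is SI, the two standing hypotheses of Corollary~\ref{corollary6.4} are met (using that the quotient map $\pi$ transports the SI property of $\Sc(T,T^*)$ to $\Sc(\pi(T),\pi(T)^*)$, and that essential normality means $\pi(T)$ is normal in the Calkin algebra). That corollary then yields the dichotomy $\sigma_e(T)\subset\mathbb{R}$ or $\sigma_e(T)\subset S^1\cup\{0\}$. Since a line in the plane, and the unit circle together with the origin, are each planar-Lebesgue-null, in either alternative one concludes $\area(\sigma_e(T))=0$, completing the equivalence.

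The step I expect to carry all the weight is the invocation of Corollary~\ref{corollary6.4}, but its difficulty has already been absorbed upstream: the genuinely nontrivial content lives in Theorem~\ref{theorem6.3} and Corollary~\ref{corollary6.4}, where the SI-derived functional equation $\bar f = f^n\bar f^m$ on the spectrum of $\pi(T)$ forces $\sigma_e(T)$ into $S^1\cup\{0\}$ (or into $\mathbb{R}$ when $\pi(T)$ is selfadjoint). Once that dichotomy is granted, the only remaining obligation here is the routine measure-theoretic remark that $\mathbb{R}$ and $S^1\cup\{0\}$ have zero area, so no new obstacle arises at the level of this corollary itself.
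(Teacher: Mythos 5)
Your proof matches the paper's argument exactly: the paper also obtains the direction $\area(\sigma_e(T))=0\Rightarrow$ essentially normal from \cite[Corollary 31.15]{Conway}, and the converse by invoking Corollary \ref{corollary6.4} to place $\sigma_e(T)$ in $\mathbb{R}$ or in $S^1\cup\{0\}$, both of planar measure zero. No discrepancies.
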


 \begin{remark}\label{subnormal}
 The conclusion in Corollary~\ref{corollary7.3} does not hold if we drop the hypothesis that $\Sc(T,T^*)$ is an SI semigroup. For instance, the multiplication operator $M_z$ considered in Example \ref{ee-1} is normal and so is essentially normal. And from there recall that $\Sc(M_z, M_z)$ is not SI. It follows from \cite[Chapter XI, Section 4, Proposition 4.6]{Con90} that for a normal operator $N$, $$\sigma(N)\setminus \sigma_{e}(N) = \{\lambda \in \sigma(N) : \lambda \text{ is an isolated point of $\sigma(N)$ that is an eigenvalue of finite multiplicity}\}.$$ Since $\sigma(M_z)$ has no isolated point (as  $\sigma(M_z)=\overline{\mathbb{D}}$, see Example \ref{ee-1}),  $\sigma(M_z)\setminus\sigma_e(M_z)=\emptyset$. Hence, $\sigma_e(M_z)=\sigma(M_z)$. 
This implies that $\area(\sigma_e(M_z))\neq 0.$
  \end{remark}
 
  For an essentially normal operator, the SI property of $\Sc(T, T^*)$  bears on the thinness of the essential spectrum in that the area of the essential spectrum must be zero. But the full spectrum of an essentially normal operator under the SI property need not be thin, in fact, the spectrum could be the closed unit disc as proved in Theorem \ref{fullspec} below. 

We recall here a few definitions that are used in the proof of Theorem \ref{fullspec}. For $A \in B(\mathcal H)$, the left essential spectrum of $A$ and the right essential spectrum of $A$ are defined as $\sigma_{le}(A) = \sigma_{l}(\pi(A))$ and $\sigma_{re}(A) = \sigma_{r}(\pi(A))$ respectively, where $\sigma_{l}(\pi(A))$ and $\sigma_{r}(\pi(A))$ denote the left and the right spectrum of $\pi(A)$ respectively (see \cite[Chapter XI, Definition 4.1]{Con90}).
 %For an essentially normal  weighted shift $W$, the SI property of $\Sc(W, W^*)$ implies that the spectrum of $W$, i.e., $\sigma(W) = \overline{\mathbb D}$, where $\overline{\mathbb{D}}$ denotes the closed unit disc centered at origin. 
  
%\begin{proposition}\label{lemma3.3}
	%For $W$ a weighted shift with weights $\{\alpha_n\}$, where $\alpha_n\neq 0$ for all $n\geq 1$, $\sigma_p(W^*)=\Gamma (W)$.
%\end{proposition}
%\begin{proof}
%For any $\lambda\in \mathbb{C}$, $\lambda\in\sigma_p(W^*)$ if and only if there exist a nonzero vector $v\in l^2(\mathbb{N})$ such that $v\in \text{Ker}(W^*-\lambda)=\range (W-\bar{\lambda})^{\perp}$ if and only if $\range(W-\bar{\lambda})^{\perp}\neq \{0\}$, or equivalently, $\overline{\range(W-\bar{\lambda})}\neq \mathcal{H}$ if and only if $\bar{\lambda}\in \Gamma (W)$ if and only if $\lambda\in \Gamma (W)$  (since $\Gamma (W)$ has circular symmetry).
%\end{proof}
\begin{theorem}\label{fullspec}
	Let $W$ be a  weighted shift with all nonzero complex weights $\{\alpha_n\}$. If $W$ is essentially normal and $\Sc(W,W^*)$ is SI, then $\sigma_e(W)\subseteq S^1\cup \{0\}$ and 
$\sigma(W)=\overline{\mathbb{D}}$. Moreover, $\liminf_n(|\alpha_1\alpha_2...\alpha_n|)^{1/n}=1$.
\end{theorem}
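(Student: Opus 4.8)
The plan is to treat the three assertions in sequence --- first the essential spectrum, then the full spectrum, and finally the $\liminf$ identity --- throughout exploiting the circular symmetry of weighted-shift spectra. I would first reduce to strictly positive weights: by \cite[Problem 89]{Hal82} $W$ is unitarily equivalent to the weighted shift with weights $\{|\alpha_n|\}$, and since the SI property, essential normality, the spectrum, the essential spectrum, and the products $|\alpha_1\cdots\alpha_n|$ are all invariant under this equivalence, I may assume $\alpha_n=w_n>0$. The diagonal unitaries $U_\theta e_n=e^{i(n-1)\theta}e_n$ satisfy $U_\theta W U_\theta^*=e^{i\theta}W$, so both $\sigma(W)$ and $\sigma_e(W)=\sigma(\pi(W))$ are invariant under rotation about $0$. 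For the first assertion, since $W$ is essentially normal and $\Sc(W,W^*)$ is SI, Corollary \ref{corollary6.4} gives either $\sigma_e(W)\subset\mathbb{R}$ or $\sigma_e(W)\subset S^1\cup\{0\}$; in the first case rotation-invariance forces $\sigma_e(W)\subset\{0\}$, so in all cases $\sigma_e(W)\subset S^1\cup\{0\}$.

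Next I would pin down $\sigma(W)=\overline{\mathbb{D}}$, using the classical fact that the spectrum of a unilateral weighted shift is a closed disk $\{|z|\le r(W)\}$, so that it suffices to show $r(W)=1$. For the lower bound $r(W)\ge 1$: if $\sigma_e(W)=\{0\}$ then $\pi(W)$ is a normal element of the Calkin algebra with spectrum $\{0\}$, hence $\pi(W)=0$ and $W$ is compact, forcing $w_n\to 0$; but then for large $i$ all later weights are $<1$, so $1/w_i>1$ cannot be a product of weights $w_j$ with $j\ge i$, contradicting Theorem \ref{THEOREM}. Thus $\sigma_e(W)\ne\{0\}$, and by rotation-invariance together with $\sigma_e(W)\subset S^1\cup\{0\}$ it contains the whole circle $S^1$, whence $r(W)\ge 1$. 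For the upper bound $r(W)\le 1$: every $z$ with $|z|>1$ lies in the unbounded component of $\mathbb{C}\setminus\sigma_e(W)$, so $W-z$ is Fredholm of index $0$ (the index being $0$ near infinity and constant on components); since $Wf=\lambda f$ forces $f=0$, the operator $W-z$ is injective, hence invertible, so $z\notin\sigma(W)$. Therefore $r(W)=1$ and $\sigma(W)=\overline{\mathbb{D}}$.

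For the last identity write $p_n=w_1\cdots w_n=|\alpha_1\cdots\alpha_n|$ and $R=\liminf_n p_n^{1/n}$. Solving $W^*f=\lambda f$ coordinatewise produces the unique candidate eigenvector with coefficients $\lambda^n/p_n$, which lies in $\ell^2$ exactly when $\sum_n |\lambda|^{2n}/p_n^2<\infty$; by the root test this holds for $|\lambda|<R$ and fails for $|\lambda|>R$, so $\{|\lambda|<R\}\subset\sigma_p(W^*)\subset\{|\lambda|\le R\}$. Since $\sigma_p(W^*)\subset\sigma(W)=\overline{\mathbb{D}}$, this gives $R\le 1$. Conversely, for $0<|\lambda|<1$ we have $\lambda\in\sigma(W)=\overline{\mathbb{D}}$ while $\lambda\notin\sigma_e(W)$, so $W-\lambda$ is Fredholm; being injective and non-invertible it must have nonzero cokernel, i.e. $\overline{\lambda}\in\sigma_p(W^*)$. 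Hence $\{0<|z|<1\}\subset\sigma_p(W^*)\subset\{|z|\le R\}$, giving $R\ge 1$, and therefore $\liminf_n(|\alpha_1\cdots\alpha_n|)^{1/n}=R=1$.

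The main obstacle is to rule out that the spectrum is merely a thin rotation-invariant set such as $S^1\cup\{0\}$ (or an annulus) rather than the full disk: this is exactly where I lean on the classical disk structure of weighted-shift spectra and on the Fredholm-index bookkeeping, the decisive inputs being the injectivity of $W-\lambda$ (absence of eigenvalues) and the constancy of the index on the components of $\mathbb{C}\setminus\sigma_e(W)$. A secondary delicate point is excluding compactness of $W$: here the reciprocal condition of Theorem \ref{THEOREM} is essential, since essential normality alone does not prevent $w_n\to 0$.
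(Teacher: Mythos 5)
Your proof is correct, and while it shares the paper's overall skeleton --- the dichotomy from Corollary \ref{corollary6.4}, the fact that weighted-shift spectra are closed disks centered at the origin, and the description of $\sigma_p(W^*)$ as a disk of radius $\liminf_n(|\alpha_1\cdots\alpha_n|)^{1/n}$ --- you execute each of the three main steps by a genuinely different route. Where the paper rules out $\sigma_e(W)\subset\mathbb{R}$ by noting that $\pi(W)$ selfadjoint forces $W-W^*$ (hence $W$) to be compact, you use the rotation invariance $U_\theta WU_\theta^*=e^{i\theta}W$ to collapse that case to $\sigma_e(W)=\{0\}$, which you then exclude via the reciprocal condition of Theorem \ref{THEOREM} (the paper instead cites \cite[Theorem 1.17]{PW21} for the noncompactness of $W$). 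Where the paper obtains $\sigma(W)\subseteq\overline{\mathbb{D}}$ from the inclusion $\partial(\sigma(W))\subseteq\sigma_e(W)$ (via \cite[Chapter XI, Proposition 4.2(a) and Theorem 6.8]{Con90}), you get it from constancy of the Fredholm index on the unbounded component of $\mathbb{C}\setminus\sigma_e(W)$ together with $\sigma_p(W)=\emptyset$. And where the paper reaches $\mathbb{D}\setminus\{0\}\subseteq\sigma_p(W^*)$ through the approximate point spectrum and the residual spectrum (using $\sigma_p(W^*)=\Gamma(W)^*$), you reach it by the observation that for $0<|\lambda|<1$ the operator $W-\lambda$ is Fredholm, injective and non-invertible, hence has nonzero cokernel. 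Your Fredholm bookkeeping gives a somewhat more self-contained and transparent argument (and your explicit reduction to positive weights and use of circular symmetry is a nice organizing device the paper does not employ here); the paper's route stays closer to the classical weighted-shift spectral picture in Halmos and Conway and avoids invoking index theory. The only cosmetic quibble is that $\sigma_p(W^*)$ sits inside $\sigma(W^*)$ rather than $\sigma(W)$, but since $\overline{\mathbb{D}}$ is conjugation-symmetric this changes nothing.
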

\begin{proof}
	Since $\Sc(W,W^*)$ is SI and $W$ is an infinite-rank nonselfadjoint operator (as $\alpha_n\ne 0$ for $n \geq 1$), it follows by contrapositive from \cite[Theorem 1.17]{PW21} that $W\notin K(\mathcal{H})$. Furthermore, by Corollary \ref{corollary6.4}, the SI property of $\Sc(W, W^*)$ for an essentially normal nonselfadjoint operator $W$ implies that either $\sigma_e(W) \subset \mathbb R$ or $\sigma_e(W)\subseteq S^1\cup \{0\}$. But note that $\sigma_e(W) \not \subset \mathbb R$. Indeed if $\sigma_e(W) \subset \mathbb R$ then $\pi(W)$ is selfadjoint in $B(\mathcal{H})/\mathcal{K}(\mathcal{H})$. Hence, $\pi(W) = \pi(W^*)$ which is equivalent to $W-W^* \in K(\mathcal H)$ which, since $W$ is a weighted shift, further implies that $W \in K(\mathcal H)$, contradicting the non-compactness of $W$. Therefore, $\sigma_e(W) \not \subset \mathbb R$.

 We next prove that $\sigma(W)=\overline{\mathbb{D}}$. Firstly one has that \color{black} $\sigma(W)\neq \{0\}$. Indeed, if $\sigma(W) = \{0\}$, then $\sigma_e(W) = \{0\}$ as $\sigma_e(W)\subseteq \sigma(W)=\{0\}$. Moreover, when $W$ is essentially normal, $\pi(W)$ is normal in $B(\mathcal H)/K(\mathcal H)$ and hence the spectral radius of $\pi(W)$ is equal to its norm (\cite[Lemma 1.2.7]{Lin}). The spectral radius of $\pi(W)$ is equal to 
$\text{max}\{|\lambda| \mid \lambda \in \sigma_e(W) = \sigma(\pi(W)) \}$. Since  $\sigma_e(W) = 0$, the norm of $\pi(W)$ is equal to zero which further implies that $\pi(W) = 0$, or equivalently, $W\in K(\mathcal{H})$, contradicting $W\notin K(\mathcal{H})$. Therefore, $\sigma(W)\neq \{0\}$.
Then since weighted shifts have spectra that must be closed disks centered at the origin \cite[Corollary (Kelley)]{Ridge} and this spectrum is nonzero, it must be a nonzero closed disk with center at the origin, which in fact has no isolated points. From this and \cite[Chapter XI, Proposition 4.2(a) and Theorem 6.8]{Con90} it follows that $\partial(\sigma(W))\subseteq \sigma_{e}(W)$. And as we have proven $\sigma_e(W)\subseteq S^1\cup \{0\}$, one has $\partial(\sigma(W))\subseteq S^1\cup\{0\}$.
Then because the disk $\sigma(W)\neq \{0\}$ and has boundary in $ S^1\cup\{0\}$, 
 that disk must be $\sigma(W)=\overline{\mathbb{D}}$.

We next prove that $\liminf_n(|\alpha_1\alpha_2...\alpha_n|)^{1/n}=1$. For $A \in B(\mathcal{H})$, 
$\sigma_{ap}(A)=\sigma_{le}(A)\cup\{\lambda\in\sigma_p(A) \mid \text{dim\,} \ker(A-\lambda)<\infty\}$ where $\sigma_{ap}(A)$ denotes the approximate point spectrum of $A$ and $\sigma_{le}(A)$ denotes the left-essential spectrum of $A$ (see \cite[Chapter XI, Proposition 4.4]{Con90}). In the case of a weighted shift $W$, the point spectrum $\sigma_p(W)=\phi$ as $\alpha_n \ne 0$ for $n\geq 1$ \cite[Solution 93]{Hal82} (also easy direct computation). Therefore, $\sigma_{ap}(W)=\sigma_{le}(W)\subseteq\sigma_e(W)$. Since $\sigma_e(W)\subseteq S^1\cup\{0\}$, one has $\sigma_{ap}(W)\subseteq S^1\cup\{0\}$. Since $\sigma(W)=\overline{\mathbb{D}}$ and so the boundary of the spectrum 
$S^1 = \partial\sigma(W) \subset \sigma_{ap}(W)$ \cite[Problem 78]{Hal82}, therefore $S^1\subseteq\sigma_{ap}(W)\subseteq S^1\cup\{0\}$.  
Also we have the disjoint union $\sigma_{ap}(W)\cup\Gamma (W) = \sigma(W)=\overline{\mathbb{D}}$ where $\Gamma(W)$ is the residual spectrum of $W$. Since $\sigma_{ap}(W) \cap \Gamma (W) = \emptyset$, so $\mathbb{D}\setminus\{0\}\subseteq \Gamma (W)$.  Using the fact that  for a  weighted shift with nonzero weights $\{\alpha_n\}$, $\sigma_p({W^*})$ is a disc with center $0$ and radius $\liminf_n(|\alpha_1\alpha_2...\alpha_n|)^{1/n}$ (see \cite[Solution 93]{Hal82}), 
and that for any  weighted shift with weights $\{\alpha_n\}$, $\sigma_p(W^*)=\Gamma (W)^*$ \cite[Solution 73]{Hal82},
one has $\mathbb{D}\setminus\{0\}\subseteq \sigma_p(W^*)$. 
And then from  $\mathbb{D}\setminus\{0\}\subseteq \sigma_p({W^*}) \subseteq \sigma(W^*)=\overline{\sigma(W)}=\overline{\mathbb{D}}$, we obtain $\liminf_n(|\alpha_1\alpha_2...\alpha_n|)^{1/n}=1$. \end{proof}
	\begin{corollary}
	Let $W$ be a weighted shift with the weight sequence $\{\alpha_n\}$ of nonzero numbers and $|\alpha_n|\longrightarrow \alpha$ for some $\alpha \in \mathbb{R}^+\cup\{0\}$as $n \rightarrow \infty$. If $\Sc(W,W^*)$ is SI, then $\alpha=1$. 
\end{corollary}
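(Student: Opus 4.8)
The plan is to recognize the convergent sequence $\{|\alpha_n|\}$ as a particularly simple almost periodic sequence and then feed it into the two necessary conditions already established, Theorem~\ref{almost periodic theorem} and Theorem~\ref{THEOREM}.

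First I would observe that a sequence converging to $\alpha$ is almost $1$-periodic in the sense of Definition~\ref{definition almost periodic}: taking the constant periodic sequence $c_n \equiv \alpha$ (period $p=1$), one has $\lim_n(|\alpha_n|-c_n)=0$. By Definition~\ref{first p-mean definition} its periodic mean is $q=|c_1|^{1/1}=\alpha$, using $\alpha \ge 0$. Since $W$ is nonzero (all $\alpha_n\ne 0$) and $\Sc(W,W^*)$ is SI, Theorem~\ref{almost periodic theorem} immediately gives $q\in\{0,1\}$, i.e.\ $\alpha=0$ or $\alpha=1$. So the whole corollary reduces to excluding the case $\alpha=0$.

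The main (and essentially only remaining) step is to rule out $\alpha=0$, and here I would invoke the reciprocal condition of Theorem~\ref{THEOREM}, which applies because all weights are nonzero and $\Sc(W,W^*)$ is SI. Suppose $\alpha=0$, so $|\alpha_n|\to 0$; choose $N\ge 2$ with $|\alpha_j|<1$ for all $j\ge N$ and fix $i=N$. By Theorem~\ref{THEOREM}, $1/\alpha_i$ is a product of some $\alpha_j$'s and $\bar\alpha_j$'s with indices $j\ge i$. This product cannot be empty, for otherwise $1/\alpha_i=1$ would force $|\alpha_i|=1$, contradicting $|\alpha_i|<1$. Taking absolute values, $1/|\alpha_i|$ equals a nonempty product of factors $|\alpha_j|<1$ (since $|\bar\alpha_j|=|\alpha_j|$), hence $1/|\alpha_i|<1$, i.e.\ $|\alpha_i|>1$, again contradicting $|\alpha_i|<1$. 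Thus $\alpha\ne 0$, leaving $\alpha=1$.

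I expect no genuine obstacle beyond this last contradiction; the only thing to be careful about is that the product furnished by Theorem~\ref{THEOREM} has at least one factor, which is exactly what forces $|\alpha_i|>1$. As a remark, one could bypass Theorem~\ref{almost periodic theorem} altogether and run the same reciprocal estimate twice: the case $\alpha<1$ is excluded exactly as above, while $\alpha>1$ is excluded by the symmetric estimate, since then $|\alpha_i|>1$ for large $i$ would force $1/|\alpha_i|<1$, yet $1/|\alpha_i|$ equals a nonempty product of factors $>1$.
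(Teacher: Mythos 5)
Your proof is correct, but it takes a genuinely different route from the paper's. The paper first notes that convergence of $\{|\alpha_n|\}$ makes $W^*W-WW^*$ compact, so $W$ is essentially normal; it then invokes Theorem~\ref{fullspec} (which rests on Corollary~\ref{corollary6.4}, the $C^*$-character machinery, and the non-compactness of $W$) to get $\sigma(W)=\overline{\mathbb{D}}$, and compares this with Ridge's formula $\sigma(W)=\{z : |z|\le \alpha\}$ to conclude $\alpha=1$. You instead stay entirely within the combinatorial weight-sequence machinery of Section~3: the constant sequence $c_n\equiv\alpha$ exhibits $\{|\alpha_n|\}$ as almost $1$-periodic with periodic mean $\alpha$, so Theorem~\ref{almost periodic theorem} forces $\alpha\in\{0,1\}$, and the reciprocal condition of Theorem~\ref{THEOREM} kills $\alpha=0$ exactly as in the paper's own Remark~\ref{alpha1} (a nonempty product of moduli all $<1$ cannot equal $1/|\alpha_i|>1$, and the empty product would force $|\alpha_i|=1$). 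Your closing remark is also right: the reciprocal estimate alone, applied once for $\alpha<1$ and once symmetrically for $\alpha>1$, already yields the corollary without Theorem~\ref{almost periodic theorem} at all. Your approach is more elementary and avoids essential normality and spectral theory entirely; the paper's approach is heavier but delivers the extra information that $\sigma(W)=\overline{\mathbb{D}}$ along the way. Both arguments are sound.
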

\begin{proof}
	Since$\{|\alpha_n|\}$ is convergent, an elementary computation shows that $W$ is essentially normal. Therefore, by Theorem \ref{fullspec}, $\sigma(W)=\overline{\mathbb{D}}$. On the other hand, after verifying $W$ is injective, $|\alpha_n|\longrightarrow \alpha$ implies that $\sigma(W)=\{z\in \mathbb{C}\mid |z|\leq \alpha\}$ \cite[Corollary 1]{Ridge}.  Therefore $\alpha=1$.
\end{proof}

As promised earlier, we next characterize the SI semigroup $\Sc(a,a^*)$ generated by a non-normal idempotent element $a$ in a $C^*$-algebra (Theorem \ref{idempotent}). And as a byproduct, we obtain nontrivial projections in $C^*(a)$ under the SI assumption on $\Sc(a, a^*)$ (Remark \ref{nontrivial_projection}). We first recall the definition of a partial isometry for an abstract $C^*$-algebra.
\begin{definition}\cite[Definition 5.1.4]{Olsen}\label{definition of pi}
	An element $a$ in a $C^*$-algebra $\mathcal A$ is called a partial isometry when $a^*a$ is a projection.
\end{definition}
\noindent The familiar equivalent statements about partial isometry in $B(\mathcal H)$ also hold for any $C^*$-algebra, that is, $a$ is a partial isometry if and only if $a^*a$ is a projection if and only if $a=aa^*a$ (see \cite[Exercise 5.A(d)]{Olsen}).

If $a$ is the unit element of $\mathcal A$, then $\Sc(a, a^*) = C^*(a) = \{1\}$, and so there is no nontrivial projection. So, we assume that $a \neq 1$ henceforth in this discussion.
Note that if $a$ is a nonzero selfadjoint idempotent, i.e., $a^2=a=a^*$, then $a$ itself is a projection which is clearly a partial isometry. And since $a$ is selfadjoint, $\Sc(a,a^*)$ is automatically SI. So in this case, $C^*(a)$ has a nontrivial projection, namely $a$ itself (as $0 \neq a \neq 1$). 

If $a$ is normal nonselfadjoint and idempotent, the relations $a^*a = aa^*$ and $a^2 = a$ imply that $a^*a$ is a projection and so also $aa^*$ is a projection. Moreover, if $a^*a= 1$ then multiplying both sides with $a$ and using $a= a^2$ implies selfadjointness of $a$ contradicting the nonselfadjointness of $a$. So, $0 \neq a^*a \neq 1$ which implies that $a^*a$ is a nontrivial projection.  Observe that the SI property does not play any role in the existence of nontrivial projections in $C^*(a)$ when generated by a normal nonselfadjoint idempotent. 
So the interesting case for us is when $a$ is a non-normal idempotent which is addressed in the following theorem.

\begin{theorem}\label{idempotent}
	For a non-normal idempotent element $a\in \mathcal{A}$ in a unital $C^*$-algebra, the following are equivalent.
	\begin{enumerate}[label=(\roman*)]
		\item $\Sc(a,a^*)$ is an SI semigroup.
		\item $a$ is a partial isometry.
		\item $\Sc(a,a^*)$ is simple.
	\end{enumerate}
	%$$	\Sc(a,a^*)~ \text{is an SI semigroup if and only if}~ a~ \text{is a partial isometry}.$$
\end{theorem}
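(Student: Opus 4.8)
The three statements will be handled as a cycle, with (iii)$\Rightarrow$(i) being immediate: a simple semigroup has only one ideal, namely itself, and since $\Sc(a,a^*)$ is selfadjoint by construction, that sole ideal is selfadjoint, so $\Sc(a,a^*)$ is trivially SI. The substance lies in (i)$\Rightarrow$(ii) and (ii)$\Rightarrow$(iii). Throughout I would use that a non-normal element is automatically nonzero, nonselfadjoint, and not the unit, together with the idempotent relations $a^2=a$ and ${a^*}^2=a^*$ which allow every word in $a,a^*$ to be reduced to an alternating word.

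For (i)$\Rightarrow$(ii): since $\Sc(a,a^*)$ is SI, the principal ideal $(a)_{\Sc(a,a^*)}$ is selfadjoint, so $a^*=xay$ for some $x,y\in\Sc(a,a^*)\cup\{1\}$ not both equal to $1$ (otherwise $a$ would be selfadjoint). The reductions $a^2\to a$ and ${a^*}^2\to a^*$ preserve the first and last letters of a word and never delete a letter-type that occurs. Reading off the first and last letters of $a^*$ forces $x$ to begin and $y$ to end with $a^*$, and since the displayed middle $a$ survives reduction, the reduced form of $xay$ is $(a^*a)^k a^*$ for some $k\ge 1$. Hence $a^*=(a^*a)^k a^*$; taking adjoints gives $a=a(a^*a)^k$, and left-multiplying by $a^*$ gives, with $b:=a^*a\ge 0$, the identity $b=b^{k+1}$. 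The continuous functional calculus on the positive element $b$ then shows $\sigma(b)\subseteq\{t\ge 0: t=t^{k+1}\}=\{0,1\}$, so $b=a^*a$ is a projection and $a$ is a partial isometry.

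For (ii)$\Rightarrow$(iii): from $a$ a partial isometry one has $a=aa^*a$ and $a^*=a^*aa^*$; together with $a^2=a$ and ${a^*}^2=a^*$ these relations collapse every alternating word of length $\ge 3$ (each such word contains $aa^*a$ or $a^*aa^*$ as a block), so $\Sc(a,a^*)=\{a,a^*,aa^*,a^*a\}$, a four-element semigroup whose elements are pairwise distinct because $a$ is non-normal. Simplicity then reduces to checking that each of these four elements generates the whole semigroup as a principal ideal; for instance $a^*=a^*\cdot a\cdot a^*\in(a)_{\Sc(a,a^*)}$, $a^*a=a^*\cdot a\in(a)_{\Sc(a,a^*)}$ and $aa^*=a\cdot a^*\in(a)_{\Sc(a,a^*)}$, so $(a)_{\Sc(a,a^*)}=\Sc(a,a^*)$, with the case of $a^*$ symmetric via adjoints and the cases of $aa^*,a^*a$ following because each of their ideals is shown to contain $a$ and hence the full ideal $(a)_{\Sc(a,a^*)}=\Sc(a,a^*)$.

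I expect the word-length bookkeeping in (i)$\Rightarrow$(ii)---in particular justifying that the reduced form is exactly $(a^*a)^k a^*$ with $k\ge 1$ rather than the vacuous $k=0$, using that reduction preserves occurring letter-types and endpoints---to be the main obstacle requiring care. Once the relation $b=b^{k+1}$ is in hand, the functional-calculus conclusion that $b$ is a projection is routine, and the enumeration in (ii)$\Rightarrow$(iii), while the most tedious part, is entirely mechanical.
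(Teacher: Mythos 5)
Your overall route is the same as the paper's: the cycle (iii)$\Rightarrow$(i) trivially, (i)$\Rightarrow$(ii) via forcing $a^*=(a^*a)^ka^*$ and hence $a^*a=(a^*a)^{k+1}$, and (ii)$\Rightarrow$(iii) by collapsing the semigroup to $\{a,a^*,aa^*,a^*a\}$ and checking the four principal ideals. The parts (ii)$\Rightarrow$(iii) and (iii)$\Rightarrow$(i), and the functional-calculus finish of (i)$\Rightarrow$(ii), are fine.

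There is, however, a genuine gap in (i)$\Rightarrow$(ii), precisely at the step you yourself flag as the delicate one. From $a^*=xay$ you ``read off the first and last letters of $a^*$'' to conclude that $x$ begins and $y$ ends with $a^*$, and hence that the reduced word is $(a^*a)^ka^*$. This conflates equality of \emph{elements} of $\mathcal A$ with equality of \emph{words}: the identity $a^*=xay$ is an equation in the algebra, and distinct reduced words can represent the same element --- indeed the very conclusion you are after, $a^*=(a^*a)^ka^*$, is such a coincidence of two different words. Your reduction argument legitimately shows that the reduced form of $xay$ contains the letter $a$ and so lies among $a$, $(aa^*)^k$, $(a^*a)^k$, $(aa^*)^ka$, $(a^*a)^ka^*$; the first three are excluded because they are (respectively equal to $a$ or) selfadjoint while $a^*$ is not. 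But the case $a^*=(aa^*)^ka$ cannot be excluded by inspecting letters; it requires an algebraic argument. The paper supplies it: if $a^*=(aa^*)^ka$, multiply on the right by $a^*$ and use ${a^*}^2=a^*$ and $aa^*=aa^*$ to get $a^*={a^*}^2=(aa^*)^{k+1}$, which is selfadjoint, contradicting the nonselfadjointness of $a$. With that two-line case elimination inserted, your proof closes and coincides with the paper's.
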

\begin{proof}
	Since $a$ is a non-normal idempotent, $a$ is not equal to the unit element in $\mathcal{A}$ and the semigroup list for $\Sc(a,a^*)$ described in the introduction of this subsection prior to Theorem~\ref{theorem6.3}) reduces to the following list:
	\begin{equation}\label{semigroup_list}
	\Sc(a,a^*)= \{a,a^*,(aa^*)^k,(aa^*)^ka,(a^*a)^k,(a^*a)^ka^* : k \geq 1\}.
	\end{equation}
	 (i) $\Rightarrow$ (ii): Suppose $\Sc(a,a^*)$ is an SI semigroup. To show that $a$ is a partial isometry, we prove that $a^*a$ is a projection. Indeed, the SI property of $\Sc(a,a^*)$ implies that the principal ideal $(a)_{\Sc(a,a^*)}$ is selfadjoint. Therefore, $a^*=xay$ for some $x,y\in \Sc(a,a^*)\cup\{1\}$; where both $x,y$ are not equal to $1$ because $a$ is nonselfadjoint. Since $a$ is not normal, it is not selfadjoint and $a^*=xay$ implies that $xay$ is not selfadjoint and so $xay$ cannot be in first, third and fifth form in display~(\ref{semigroup_list}). Moreover, since $xay$ contains an $a$ in it,  $xay$ must be either in fourth or sixth form. If $xay$ has fourth form. Then $xay=(aa^*)^ka$ for some $k\geq 1$. Since $a^*=xay$ so $a^*=(aa^*)^ka$. Multiplying by $a^*$ on both sides we obtain, ${a^*}^2=(aa^*)^{k+1}$ which implies that $a^*=(aa^*)^{k+1}$, contradicting the nonselfadjointness of $a^*$. Therefore, $xay$ must be in sixth form, i.e., $xay=(a^*a)^ka^*$ for some $k\ge 1$. Then using $a^*=xay$ we have $a^*=(a^*a)^ka^*$. We then multiply by $a$ on both sides which further implies that $a^*a=(a^*a)^{k+1}$. Since $a^*a$ is normal, by \cite[Theorem 2.1.13]{Murphy}, it follows that $\sigma(a^*a) \subset \{0, 1\}$ and that $a^*a$ is idempotent. So $a^*a$ is a projection.
	
(ii) $\Rightarrow$ (iii): Suppose $a$ is a partial isometry. Then $a^*a$ is a projection and $a=aa^*a$. Therefore, the semigroup list in~(\ref{semigroup_list}) reduces to the following list:
	$$\Sc(a,a^*)=	\{a,a^*,aa^*,a^*a\}.$$
	Using the relation $a=aa^*a$, one can easily check that $\Sc(a,a^*)$ is simple.
	
(iii) $\Rightarrow$ (i):	 Simple semigroups are automatically SI.
\end{proof}
An immediate consequence under the hypothesis of the above theorem is highlighted in the remark below.
\begin{remark}\label{nontrivial_projection}
(i) For a non-normal idempotent, in the proof of Theorem \ref{idempotent}(i)$\Rightarrow$(ii), we showed that if $\Sc(a, a^*)$ is SI, then $\sigma(a^*a) \subset \{0, 1\}$ and $a^*a$ is a projection. We further conclude the existence of a nontrivial projection in $C^*(a)$. Indeed, suppose $\sigma(a^*a)$ is a singleton set, namely, $\sigma(a^*a) = \{1\}$ (since $\sigma(a^*a) = 0$ implies $a = 0$ contradicting non-normality of $a$). Since $\sigma(a^*a) = \{1\}$, $a^*a$ is invertible.  Also $a^*a = (a^*a)^2$, so multiplying $(a^*a)^{-1}$ on both sides, we obtain $1 = a^*a$. Multiplying by $a$ on both sides to $1 = a^*a$ and using $a = a^2$, we obtain $a= a^*a$ which contradicts the non-normality of $a$. Therefore, $\sigma(a^*a) = \{0, 1\}$ which is disconnected. Hence, $C^*(a^*a)$ has nontrivial projections. Since $C^*(a^*a) \subset C^*(a)$, it follows that $C^*(a)$ has nontrivial projections.

(ii)  We summarize the SI characterization of $\Sc(a, a^*)$ generated by an idempotent. For a normal idempotent, $\Sc(a, a^*)$ is automatically SI. Indeed, as discussed in the paragraph preceding Theorem \ref{idempotent}, $a^*a$ is a projection and so $a$ is a partial isometry. It follows from Theorem \ref{idempotent}(ii)$\Rightarrow$(iii) that $\Sc(a, a^*)$ is simple, and hence SI. 
 For a non-normal idempotent, $\Sc(a, a^*)$ is SI if and only if $a$ is a partial isometry if and only if $\Sc(a, a^*)$ is simple (Theorem \ref{idempotent}).
	\end{remark}

Based on the evidences found so far on the impact of the SI property of $\Sc(T, T^*)$ on the spectrum of $T$ for special classes of operators, for instance, in Theorem \ref{theorem 3.1}, Remark \ref{almost p-spectrum}, Remark \ref{R6.1}, Theorem \ref{theorem6.3}, and Theorem \ref{fullspec}, we have $\sigma(T) \subset \overline{\mathbb D}$ for $\Sc(T, T^*)$ semigroup to be SI. 
 
\noindent \textit{Conjecture: If $\Sc(T, T^*)$ is an SI semigroup generated by a nonselfadjoint operator $T$, then the spectrum of $T$ is a subset of the closed unit disk.}

\section*{Acknowledgement}
The first author would like to thank the Science and Engineering Research Board, Core Research Grant 002514, and the third author would like to thank Simons Foundation collaboration grants 245014 and 636554 for undertaking this research work.

\section*{References}

\begin{biblist}
%\bib{AR74}{article}
%{AUTHOR = {Arveson, W.},
%     TITLE = {A note on essentially normal operators},
%      NOTE = {Spectral Theory Symposium (Trinity College, Dublin, 1974)},
%   JOURNAL = {Proc. Roy. Irish Acad. Sect. A},
%  FJOURNAL = {Proceedings of the Royal Irish Academy. Section A.
%              Mathematical and Physical Sciences},
%    VOLUME = {74},
%      YEAR = {1974},
%     PAGES = {143--146},
%      ISSN = {0035-8975},
%   MRCLASS = {47B20 (46L99)},
%  MRNUMBER = {365217},
%MRREVIEWER = {Norberto Salinas},
%}
\bib{Black80}{article}
{
AUTHOR = {Blackadar, Bruce E.},
     TITLE = {A simple {$C^{\ast} $}-algebra with no nontrivial
              projections},
   JOURNAL = {Proc. Amer. Math. Soc.},
  FJOURNAL = {Proceedings of the American Mathematical Society},
    VOLUME = {78},
      YEAR = {1980},
    NUMBER = {4},
     PAGES = {504--508},
      ISSN = {0002-9939},
   MRCLASS = {46L05},
  MRNUMBER = {556621},
MRREVIEWER = {Jonathan M. Rosenberg},
       DOI = {10.2307/2042420},
       URL = {https://doi.org/10.2307/2042420},
}

\bib{JB70}{article}
{AUTHOR = {Bunce, John},
     TITLE = {Characters on singly generated {$C^{\ast} $}-algebras},
   JOURNAL = {Proc. Amer. Math. Soc.},
  FJOURNAL = {Proceedings of the American Mathematical Society},
    VOLUME = {25},
      YEAR = {1970},
     PAGES = {297--303},
      ISSN = {0002-9939},
   MRCLASS = {46.65 (47.00)},
  MRNUMBER = {259622},
MRREVIEWER = {N. Suzuki},
       DOI = {10.2307/2037208},
       URL = {https://doi.org/10.2307/2037208},
}

\bib{Con90}{book}
{
 AUTHOR = {Conway, John B.},
     TITLE = {A course in functional analysis},
    SERIES = {Graduate Texts in Mathematics},
    VOLUME = {96},
   EDITION = {Second},
 PUBLISHER = {Springer-Verlag, New York},
      YEAR = {1990},
     PAGES = {xvi+399},
      ISBN = {0-387-97245-5},
   MRCLASS = {46-01 (47-01)},
  MRNUMBER = {1070713},
}

\bib{Conway}{book}
{
AUTHOR = {Conway, John B.},
     TITLE = {A course in operator theory},
    SERIES = {Graduate Studies in Mathematics},
    VOLUME = {21},
 PUBLISHER = {American Mathematical Society, Providence, RI},
      YEAR = {2000},
     PAGES = {xvi+372},
      ISBN = {0-8218-2065-6},
   MRCLASS = {47-01 (46-01)},
  MRNUMBER = {1721402},
MRREVIEWER = {Victor S. Shul\cprime man},
       DOI = {10.1090/gsm/021},
       URL = {https://doi.org/10.1090/gsm/021},
}

\bib{Hal82}{book}
{
    AUTHOR = {Halmos, Paul Richard},
     TITLE = {A {H}ilbert space problem book},
    SERIES = {Encyclopedia of Mathematics and its Applications},
    VOLUME = {19},
   EDITION = {Second},
 PUBLISHER = {Springer-Verlag, New York-Berlin},
      YEAR = {1982},
     PAGES = {xvii+369},
      ISBN = {0-387-90685-1},
   MRCLASS = {47-01 (46-01)},
  MRNUMBER = {675952},
MRREVIEWER = {J. Weidmann},
}

\bib{HM}{article}
{
AUTHOR = {Roozbeh Hazrat and Zachary Mesyan},
     TITLE = {Graded Semigroups},
    DOI = {10.48550},
      
 }

\bib{Lin}{book}{
    AUTHOR = {Lin, Huaxin},
     TITLE = {An introduction to the classification of amenable
              {$C^*$}-algebras},
 PUBLISHER = {World Scientific Publishing Co., Inc., River Edge, NJ},
      YEAR = {2001},
     PAGES = {xii+320},
      ISBN = {981-02-4680-3},
   MRCLASS = {46Lxx (19K35)},
  MRNUMBER = {1884366},
MRREVIEWER = {Xiao Chun Fang},
       DOI = {10.1142/9789812799883},
       URL = {https://doi.org/10.1142/9789812799883},
}

\bib{Murphy}{book}{
    AUTHOR = {Murphy, Gerard J.},
     TITLE = {{$C^*$}-algebras and operator theory},
 PUBLISHER = {Academic Press, Inc., Boston, MA},
      YEAR = {1990},
     PAGES = {x+286},
      ISBN = {0-12-511360-9},
   MRCLASS = {46Lxx (46-01)},
  MRNUMBER = {1074574},
MRREVIEWER = {E. Gerlach},
}
		
\bib{Olsen}{book} {
    AUTHOR = {Wegge-Olsen, N. E.},
     TITLE = {{$K$}-theory and {$C^*$}-algebras},
    SERIES = {Oxford Science Publications},
      NOTE = {A friendly approach},
 PUBLISHER = {The Clarendon Press, Oxford University Press, New York},
      YEAR = {1993},
     PAGES = {xii+370},
      ISBN = {0-19-859694-4},
   MRCLASS = {46L80 (19Kxx 46L05)},
  MRNUMBER = {1222415},
MRREVIEWER = {Mahmood Khoshkam},
}

\bib{Patel}{article}
{AUTHOR = {Patel, S.},
     TITLE = {A note on quasi-isometries},
   JOURNAL = {Glasnik Matematicki},
      VOLUME = {35},
      YEAR = {2000},
     PAGES = {307--312}
}

\bib{PW21}{article}		
{AUTHOR = {S. Patnaik and G. Weiss},
     TITLE = {Interplay of simple and selfadjoint-ideal semigroups in
              {$B(H)$}},
   JOURNAL = {Oper. Matrices},
  FJOURNAL = {Operators and Matrices},
    VOLUME = {15},
      YEAR = {2021},
    NUMBER = {3},
     PAGES = {815--851},
      ISSN = {1846-3886},
   MRCLASS = {47B15 (20M12 47D03 47L20)},
  MRNUMBER = {4364574},
       DOI = {10.7153/oam-2021-15-56},
       URL = {https://doi.org/10.7153/oam-2021-15-56},
}

\bib{Put70}{article}
{
    AUTHOR = {Putnam, C. R.},
     TITLE = {An inequality for the area of hyponormal spectra},
   JOURNAL = {Math. Z.},
  FJOURNAL = {Mathematische Zeitschrift},
    VOLUME = {116},
      YEAR = {1970},
     PAGES = {323--330},
      ISSN = {0025-5874},
   MRCLASS = {47.30},
  MRNUMBER = {270193},
MRREVIEWER = {S. K. Berberian},
       DOI = {10.1007/BF01111839},
       URL = {https://doi.org/10.1007/BF01111839},
}

\bib{Ridge}{article}{
    AUTHOR = {Ridge, William C.},
     TITLE = {Approximate point spectrum of a weighted shift},
   JOURNAL = {Trans. Amer. Math. Soc.},
  FJOURNAL = {Transactions of the American Mathematical Society},
    VOLUME = {147},
      YEAR = {1970},
     PAGES = {349--356},
      ISSN = {0002-9947},
   MRCLASS = {47.30},
  MRNUMBER = {254635},
MRREVIEWER = {C. Pearcy},
       DOI = {10.2307/1995198},
       URL = {https://doi.org/10.2307/1995198},
}

\bib{Stamp62}{article}
{AUTHOR = {Stampfli, J. G.},
     TITLE = {Hyponormal operators},
   JOURNAL = {Pacific J. Math.},
  FJOURNAL = {Pacific Journal of Mathematics},
    VOLUME = {12},
      YEAR = {1962},
     PAGES = {1453--1458},
      ISSN = {0030-8730},
   MRCLASS = {47.40},
  MRNUMBER = {149282},
MRREVIEWER = {A. E. Taylor},
       URL = {http://projecteuclid.org/euclid.pjm/1103036144},
}

\bib{Stamp65}{article}
{
    AUTHOR = {Stampfli, J. G.},
     TITLE = {Hyponormal operators and spectral density},
   JOURNAL = {Trans. Amer. Math. Soc.},
  FJOURNAL = {Transactions of the American Mathematical Society},
    VOLUME = {117},
      YEAR = {1965},
     PAGES = {469--476},
      ISSN = {0002-9947},
   MRCLASS = {47.40 (47.30)},
  MRNUMBER = {173161},
MRREVIEWER = {A. E. Taylor},
       DOI = {10.2307/1994219},
       URL = {https://doi.org/10.2307/1994219},
}

\end{biblist}

\end{document}